\newcommand{\subsectionruninhead}{\@startsection{subsection}{2}{0mm}
{-\baselineskip}{-0mm}{\bf\large}}
\newcommand{\subsubsectionruninhead}{\@startsection{subsubsection}{3}{0mm}
{-\baselineskip}{-0mm}{\bf\normalsize}}
\newtheorem*{theorem*}{Theorem}
\newtheorem{theoremalph}{Theorem}
\newtheorem*{proposition*}{Proposition}
\newtheorem*{corollary*}{Corollary}
\newtheorem*{claim*}{Claim}
\newtheorem*{remark*}{Remark}
\newtheorem*{problem*}{Problem}
\newtheorem{theorem}{Theorem}[section]
\newtheorem{conjecture}{Conjecture}[section]
\newtheorem{proposition}[theorem]{Proposition}
\newtheorem{corollary}[theorem]{Corollary}
\newtheorem{lemma}[theorem]{Lemma}
\newtheorem{claim}[theorem]{Claim}
\theoremstyle{definition}
\newtheorem{definition}[theorem]{Definition}
\newtheorem{remark}[theorem]{Remark}
\newtheorem{question}[theorem]{Question}
\numberwithin{equation}{section}
\newtheorem{example}[theorem] {Example}
 \def\RR{{\mathbb R}}
    \def\cU{\mathcal{U}}
\newcommand{\orb}{\operatorname{Orb}}
\newcommand{\sing}{\operatorname{Sing}}
\newcommand{\eps}{\varepsilon}
\newcommand{\R}{\mathbb{R}}
\newcommand{\N}{{\mathbb{N}}}
\newcommand{\Z}{\mathbb{Z}}
\newcommand{\set}[1]{\left\{#1\right\}}
\newcommand{\htop}{h_{\text{top}}}
\newcommand{\F}{\mathfrak{F}}
\DeclareMathOperator{\cov}{cov}
\begin{document}

\title{Conditional intermediate entropy and Birkhoff average properties of hyperbolic flows}

	
\author{{Xiaobo Hou$^\S$ and Xueting Tian$^{\dag}$}\\
	{\em\small School of Mathematical Science,  Fudan University}\\
	{\em\small Shanghai 200433, People's Republic of China}\\
	{\small $^\S$Email:20110180003@fudan.edu.cn;   $^{\dag}$Email:xuetingtian@fudan.edu.cn}\\}

\footnotetext{Key words and phrases: Hyperbolic sets; Singular hyperbolic attractor; Metric entropy; Horseshoe; Symbolic dynamics.  }
\footnotetext{AMS Review:    37C70;  37C83; 37D20; 37B10.   }

\maketitle

\begin{abstract}
	 Katok conjectured that every $C^{2}$ diffeomorphism $f$ on a Riemannian manifold has the intermediate entropy property, that is, for any constant $c \in[0, h_{top}(f))$, there exists an ergodic measure $\mu$ of $f$ satisfying $h_{\mu}(f)=c$. In this paper we consider a conditional intermediate metric entropy property and two conditional intermediate Birkhoff average properties  for flows. For a basic set $\Lambda$ of  a flow $\Phi$ and two continuous function $g,$ $h$ on $\Lambda,$  we obtain $$\mathrm{Int}\left\{h_{\mu}(\Phi):\mu\in \mathcal{M}_{erg}(\Phi,\Lambda)\text{ and }\int g d\mu=\alpha\right\}=\mathrm{Int}\left\{h_{\mu}(\Phi):\mu\in \mathcal{M}(\Phi,\Lambda) \text{ and }\int g d\mu=\alpha\right\},$$  $$\mathrm{Int}\left\{\int g d\mu:\mu\in \mathcal{M}_{erg}(\Phi,\Lambda)\text{ and }h_{\mu}(\Phi)=c\right\}=\mathrm{Int}\left\{\int g d\mu:\mu\in \mathcal{M}(\Phi,\Lambda) \text{ and }h_{\mu}(\Phi)=c\right\}$$ and $$\mathrm{Int}\left\{\int h d\mu:\mu\in \mathcal{M}_{erg}(\Phi,\Lambda)\text{ and }\int g d\mu=\alpha\right\}=\mathrm{Int}\left\{\int h d\mu:\mu\in \mathcal{M}(\Phi,\Lambda) \text{ and }\int g d\mu=\alpha\right\}$$ for any $\alpha\in \left(\inf_{\mu\in  \in \mathcal{M}(\Phi,\Lambda) }\int g d\mu,  \,  \sup_{\mu\in   \in \mathcal{M}(\Phi,\Lambda) }\int g d\mu\right)$ and any $c\in (0,h_{top}(\Lambda)).$  In this process, we establish 'multi-horseshoe' entropy-dense property and use it to get the goal combined with  conditional variational principles. We also obtain  same result for singular hyperbolic attractors.
\end{abstract}


\section{Introduction}
\qquad In \cite{Katok} Katok showed that every $C^{1+\alpha}$ diffeomorphism $f$ in dimension $2$ has horseshoes of large entropies. This implies that the system has the intermediate entropy property, i.e., for any constant $c \in[0, h_{top}(f))$, there exists an ergodic measure $\mu$ of $f$ satisfying $h_{\mu}(f)=c,$ where $\htop(f)$ is the topological entropy of $f,$ and $h_\mu(f)$ is the metric entropy of $\mu.$ Katok believed that this holds in any dimension. In the last decade, a number of partial results on the intermediate entropy property have been obtained, see \cite{Ures2012,QuasSoo2016,Burguet2020,GSW2017,lsww,KKK2018,LiOpro2018,Sun2019,HXX}. 
\begin{conjecture}{(Katok)}
	Every $C^{2}$ diffeomorphism $f$ on a Riemannian manifold has the intermediate entropy property.
\end{conjecture}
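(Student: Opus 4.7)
The plan is to reduce the conjecture to a statement about uniformly hyperbolic pieces. Fix $c\in[0,\htop(f))$ and seek an ergodic measure $\mu$ with $h_\mu(f)=c$. By the variational principle I may choose an ergodic measure $\nu$ with $h_\nu(f)>c$. Provided $\nu$ is hyperbolic (no zero Lyapunov exponents), Katok's horseshoe theorem produces a uniformly hyperbolic horseshoe $\Lambda_\nu\subset M$ with topological entropy $\htop(f|_{\Lambda_\nu})$ arbitrarily close to $h_\nu(f)$, in particular exceeding $c$.

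Next, since $f|_{\Lambda_\nu}$ is topologically conjugate to a topologically mixing subshift of finite type, I would invoke the classical intermediate entropy property for such symbolic systems---due essentially to Sigmund via embedding full Bernoulli subshifts of prescribed entropy---to produce an ergodic $f$-invariant measure supported on $\Lambda_\nu$ with entropy exactly $c$. Since $\Lambda_\nu$ is $f$-invariant, this measure is ergodic for $f$ globally, completing the construction whenever the hyperbolicity hypothesis on $\nu$ can be arranged.

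The main obstacle is precisely the hyperbolicity of $\nu$. In dimension two the Ruelle inequality combined with the Pesin entropy formula forces every ergodic measure of positive entropy to be hyperbolic, and this is how Katok's original paper settles the conjecture for surfaces. In dimensions three and higher there exist smooth examples whose ergodic measures of large entropy carry zero Lyapunov exponents, so Katok's horseshoe approximation no longer applies and the reduction to symbolic dynamics breaks down. Circumventing this requires either producing hyperbolic ergodic measures of entropy arbitrarily close to $\htop(f)$ by variational or perturbative arguments, or bypassing Katok's horseshoe theorem entirely through direct specification/gluing constructions---as in the multi-horseshoe entropy-dense technique developed in the present paper for basic sets of flows and singular hyperbolic attractors. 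Extending such a construction to an arbitrary $C^2$ diffeomorphism with no a priori hyperbolic structure is the essential content of the conjecture and is precisely what remains open.
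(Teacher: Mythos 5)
This is stated in the paper as a \emph{conjecture}, not a theorem, and the paper offers no proof of it --- indeed the entire point of the paper is to establish restricted, conditional versions of intermediate-entropy phenomena for hyperbolic and singular-hyperbolic flows, precisely because the general conjecture remains open. Your ``proof'' is therefore not a proof and should not be graded as one; but you have recognized this yourself and given an accurate account of the state of the art.

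Your outline correctly reproduces the standard reduction: take an ergodic $\nu$ of entropy above $c$ via the variational principle, pass to a Katok horseshoe if $\nu$ is hyperbolic, then use Sigmund-type arguments on the subshift of finite type to land at entropy exactly $c$. You also correctly identify the single essential obstruction in dimension $\ge 3$: one cannot in general guarantee hyperbolic ergodic measures of entropy close to $\htop(f)$, so Katok's horseshoe theorem has nothing to apply to. This matches the reason the two-dimensional case is settled (Ruelle inequality forces positive-entropy ergodic measures on surfaces to be hyperbolic) and the higher-dimensional case is not. Your concluding observation --- that the paper's multi-horseshoe entropy-dense technique circumvents this only in settings where a uniform or singular hyperbolic structure is assumed a priori (basic sets, singular hyperbolic attractors), and that extending it to arbitrary $C^2$ diffeomorphisms is exactly what is missing --- is precisely right. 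In short: no gap in your reasoning, because you correctly concluded that no proof exists; the only caveat is that one should not present this as a proof attempt but as an explanation of why the conjecture is open.
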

In \cite{TWW2019} the authors consider the intermediate Birkhoff average. To be precise, they proved that if  $f: X \rightarrow X$ is a continuous map over a compact metric space $X$ with the periodic gluing orbit property, then there is an ergodic measure $\mu_{\alpha} \in \mathcal{M}(f, X)$ such that $\int g d\mu_\alpha=\alpha$  for any continuous function $g:X\to\mathbb{R}$ and   any constant $\alpha$ satisfying
$$
\inf _{\mu \in \mathcal{M}(f, X)} \int g d\mu<\alpha<\sup _{\mu \in \mathcal{M}(f, X)} \int g d\mu
$$
where $\mathcal{M}(f, X)$ is  the set of all $f$-invariant probability measures. (The author obtained same result for the asymptotically additive functions, readers can refer to \cite{TWW2019} for details.)

Imitating the relationships  between extremum and conditional extremum,  variational principle and conditional variational principle, we consider such a  question: 
\begin{question}\label{question-AC}
	Under certain restricted conditions, do the intermediate entropy property and intermediate Birkhoff average property hold?
\end{question}
In fact, certain restricted condition determines a subset of $\mathcal{M}(f, X).$ So in other words,  Question \ref{question-AC} means that given a subset $F$ of $\mathcal{M}(f, X),$ whether one has
\begin{equation}\label{equation-BD}
	\left\{h_{\mu}(f):\mu\in F\right\}=\left\{h_{\mu}(f):\mu\in F\cap \mathcal{M}_{erg}(f,X)\right\}
\end{equation}
and 
\begin{equation}\label{equation-BE}
	\left\{\int g d\mu:\mu\in F\right\}=\left\{\int g d\mu:\mu\in F\cap \mathcal{M}_{erg}(f,X)\right\}?
\end{equation}
Here $\mathcal{M}_{erg}(f,X)$ is  the set of all $f$-invariant ergodic probability measures.

It is clear that Question \ref{question-AC} can only be true under  some suitable restricted condition or some suitable $F.$ For example, if $F\cap \mathcal{M}_{erg}(f,X)=\emptyset,$ then  Question \ref{question-AC} is obviously false. When $F=\mathcal{M}(f,X),$ (\ref{equation-BD}) is true for every $C^{1+\alpha}$ diffeomorphism $f$ in dimension $2$ \cite{Katok}  and  (\ref{equation-BE}) is true for system satisfying the periodic gluing orbit property \cite{TWW2019}. In the two proof, a basic requirement is that $\mathcal{M}_{erg}(f,X)$ is dense in $\mathcal{M}(f,X).$ So 
we believe that a suitable  $F$ should satisfies the following properties:
 \begin{description}
 	\item[(1)] $\mathcal{M}_{erg}(f,X)\cap F$ is dense in $F;$
 	\item[(2)] $F$ is convex.
 \end{description}
We will see that the following two sets satisfies item (1) and (2) for hyperbolic sets:
\begin{equation*}
	F_1(c)=\left\{\mu\in \mathcal{M}(f,X):h_{\mu}(f)=c\right\},\ F_2^g(\alpha)=\left\{\mu\in \mathcal{M}(f,X):\int gd\mu=\alpha\right\},
\end{equation*}
 where $c \in[0, h_{top}(f))$ and $
 \inf _{\mu \in \mathcal{M}(f, X)} \int g d\mu<\alpha<\sup _{\mu \in \mathcal{M}(f, X)} \int g d\mu.
 $
 And for two continuous functions $g,h$ we will consider whether the following three equalities hold: a conditional intermediate entropy property 
 \begin{equation}\label{equation-BG}
 	\left\{h_{\mu}(f):\mu\in F_2^g(\alpha)\right\}=\left\{h_{\mu}(f):\mu\in F_2^g(\alpha)\cap \mathcal{M}_{erg}(f,X)\right\},
 \end{equation}
and two conditional intermediate Birkhoff average properties
 \begin{equation}\label{equation-BH}
 	\left\{\int g d\mu:\mu\in F_1(c)\right\}=\left\{\int g d\mu:\mu\in F_1(c)\cap \mathcal{M}_{erg}(f,X)\right\},
 \end{equation}
\begin{equation}\label{equation-BJ}
	\left\{\int h d\mu:\mu\in F_2^g(\alpha)\right\}=\left\{\int h d\mu:\mu\in F_2^g(\alpha)\cap \mathcal{M}_{erg}(f,X)\right\}.
\end{equation}
Denote $L_{g}=\{\int g d\mu:\mu\in \mathcal{M}(f,X)\}.$
Note that if the following two sets are equal
\begin{equation*}
	\begin{split}
		&\mathrm{Int}\left\{(\int g d\mu, h_\mu(f)):\mu\in \mathcal{M}(f,X)\right\}\\
		&\mathrm{Int}\left\{(\int g d\mu, h_\mu(f)):\mu\in \mathcal{M}_{erg}(f,X)\right\},
	\end{split}
\end{equation*} 
then (\ref{equation-BG}) and (\ref{equation-BH}) hold except extrems for any $c \in[0, h_{top}(f))$ and  $
\inf _{\mu \in \mathcal{M}(f, X)} \int g d\mu<\alpha<\sup _{\mu \in \mathcal{M}(f, X)} \int g d\mu.
$
And if the following two sets are equal
\begin{equation*}
	\begin{split}
		&\mathrm{Int}\left\{(\int g d\mu, \int h d\mu):\mu\in \mathcal{M}(f,X)\right\}\\
		&\mathrm{Int}\left\{(\int g d\mu, \int h d\mu):\mu\in \mathcal{M}_{erg}(f,X)\right\},
	\end{split}
\end{equation*} 
then (\ref{equation-BJ}) hold except extrems for any $
\inf _{\mu \in \mathcal{M}(f, X)} \int g d\mu<\alpha<\sup _{\mu \in \mathcal{M}(f, X)} \int g d\mu.
$

In this paper, we are interested in flows. We first recall some notions of flows.
Let $\mathscr{X}^r(M)$, $r\ge 1$, denote the space of $C^r$-vector fields on a compact Riemannian manifold $M$ endowed with the $C^r$ topology. For $X\in\mathscr{X}^r(M)$, denote by $\phi_t^X$ or $\phi_t$ for simplicity the $C^r$-flow generated by $X$ and denote by ${\rm D}\phi_t$ the tangent map of $\phi_t$. 
Given a vector field $X\in\mathcal{X}^1(M)$ and a compact invariant subset $\Lambda$ of  the $C^1$-flow  $\Phi=(\phi_t)_{t\in\mathbb{R}}$ generated by $X$, we denote by $C(\Lambda,\mathbb{R})$ the space of continuous functions on $\Lambda$. The set of invariant (resp. ergodic) probability measures of $X$ supported on $\Lambda$ is denoted by $\mathcal{M}(\Phi,\Lambda)$ (resp. $\mathcal{M}_{erg}(\Phi,\Lambda)$), and it is endowed with the weak$^*$-topology. We denote by $h_\mu(X)$ or $h_\mu(\Phi)$  the metric entropy of the invariant probability measure $\mu\in \mathcal{M}(\Phi,\Lambda)$, defined as the metric entropy of $\mu$ with respect to the time-1 map $\phi^X_1$ of the flow. Let $d^*$ be a translation invariant metric on the space $\mathcal{M}(\Phi,\Lambda)$
compatible with the weak$^*$ topology.
We focus on the following question for flows in the present paper.
\begin{question}\label{Conjecture-2}
	For every  flow  $(\phi_t)_{t\in\mathbb{R}}$ generated by a typical vector field $X$, and two continuous function $g,$ $h$ on a compact invariant subset $\Lambda$ of the $C^1$ flow $(\phi_t)_{t\in\mathbb{R}},$
	do the following two equalities hold:
	\begin{equation*}
			\mathrm{Int}\left\{(\int g d\mu, h_\mu(\Phi)):\mu\in \mathcal{M}(\Phi,\Lambda)\right\}
			=\mathrm{Int}\left\{(\int g d\mu, h_\mu(\Phi)):\mu\in \mathcal{M}_{erg}(\Phi,\Lambda)\right\},
	\end{equation*} 
    and
    \begin{equation*}
    		\mathrm{Int}\left\{(\int g d\mu, \int h d\mu):\mu\in \mathcal{M}(\Phi,\Lambda)\right\}
    		=\mathrm{Int}\left\{(\int g d\mu, \int h d\mu):\mu\in \mathcal{M}_{erg}(\Phi,\Lambda)\right\}?
    \end{equation*}
\end{question}

We will answer this question partially for hyperbolic flows and singular hyperbolic folws. 
\begin{definition}\label{Def:horseshoe}
	Given $X\in\mathscr{X}^1(M)$, 
	an invariant compact set $\Lambda$ is called a \emph{basic set} if  it is, transitive, hyperbolic and  locally maximal, is not reduced to a single orbit of a hyperbolic critical element
	and its intersection with any local cross-section to the flow is totally disconnected.
\end{definition}
Given a continuous function $g$ on a compact invariant subset $\Lambda$ of the $C^1$ flow $(\phi_t)_{t\in\mathbb{R}},$ denote $L_{g}=\{ \int g d\mu:\mu\in \mathcal{M}(\Phi,\Lambda)\}.$
For any $\alpha\in  L_{g},$ denote  $$M_{g}(\alpha)=\left\{\mu\in \mathcal{M}(\Phi,\Lambda):\int g d\mu=\alpha\right\},\ M_{g}^{erg}(\alpha)=M_{g}(\alpha)\cap  \mathcal{M}_{erg}(\Phi,\Lambda),$$
and 
$$M_{g}^{top}(\alpha)=\left\{\mu\in M_g(\alpha):h_{\mu}(\Phi)=\sup\left\{h_{\nu}(\Phi):\nu\in M_{g}(\alpha)\right\}\right\}.$$
Then $M_{g}(\alpha)$ is  a closed subset of  $\mathcal{M}(\Phi,\Lambda),$ and thus $$\sup\left\{h_{\nu}(\Phi):\nu\in M_{g}(\alpha)\right\}=\max\left\{h_{\nu}(\Phi):\nu\in M_{g}(\alpha)\right\}$$ for any $\alpha\in L_{g}$ if the entropy function $\mathcal{M}(\Phi,\Lambda) \ni \mu \mapsto h_\mu(\Phi)$  is upper semi-continuous.
For a probability measure $\mu\in \mathcal{M}(\Phi,\Lambda)$ we denote  the support of $\mu$ by $$S_\mu:=\{x\in \Lambda:\mu(U)>0\ \text{for any neighborhood}\ U\ \text{of}\ x\}.$$
Now we state our first main result.
\begin{theoremalph}\label{Thm:basic-set1}
	Let $X\in\mathscr{X}^1(M)$ and $\Lambda$ be a basic set of $X$.  If $g$ is a continuous function on $\Lambda$,  then for any $\alpha\in \mathrm{Int}(L_{g}),$ any $\mu\in M_{g}(\alpha)\setminus M_{g}^{top}(\alpha),$ any $0\leq c\leq h_{\mu}(\Phi)$ and any $\zeta>0$, there is $\nu\in M_{g}^{erg}(\alpha)$ such that $d^*(\nu,\mu)<\zeta$ and $h_{\nu}(\Phi)=c.$ Moreover, for  any $\alpha\in \mathrm{Int}(L_{g})$ and $0\leq c< \max\{h_{\mu}(\Phi):\mu\in M_{g}(\alpha)\},$ the set $\{\mu\in M_{g}^{erg}(\alpha):h_{\mu}(\Phi)=c,\ S_\mu=\Lambda\}$ is residual in $\{\mu\in M_{g}(\alpha):h_{\mu}(\Phi)\geq c\}.$
\end{theoremalph}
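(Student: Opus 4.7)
The proof combines two ingredients developed earlier in the paper: the multi-horseshoe entropy-dense property and the conditional variational principle on basic sets of hyperbolic flows. Given $\mu\in M_g(\alpha)\setminus M_g^{top}(\alpha)$, the plan is first to produce a basic subset $K\subset\Lambda$ carrying an invariant measure $d^*$-close to $\mu$ together with ergodic measures whose $g$-integrals bracket $\alpha$ and whose entropies exceed $c$; then to invoke the conditional variational principle on $K$ together with the intermediate entropy property on subshifts of finite type in order to extract the desired ergodic measure with $g$-integral exactly $\alpha$ and entropy exactly $c$.

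In more detail, the hypothesis $\mu\notin M_g^{top}(\alpha)$ ensures $h_\mu(\Phi)<\max\{h_\nu(\Phi):\nu\in M_g(\alpha)\}$, so one may fix $\mu'\in M_g(\alpha)$ with $h_{\mu'}(\Phi)>c$ and work with the convex combination $t\mu'+(1-t)\mu$ for small $t>0$. Applying the multi-horseshoe entropy-dense property yields a basic subset $K\subset\Lambda$ together with $\mu_K\in\mathcal{M}(\Phi,K)$ satisfying $d^*(\mu_K,\mu)<\zeta/2$ and $h_{\mu_K}(\Phi)\ge c$, and finitely many ergodic measures on $K$ whose $g$-integrals span an interval containing $\alpha$ in its interior. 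Crucially, since $\alpha\in\mathrm{Int}(L_g)$ and $\int g\,d\mu=\alpha$, the multi-horseshoe can be arranged so that $\alpha$ lies in the interior of $L_g^K:=\{\int g\,d\nu:\nu\in\mathcal{M}(\Phi,K)\}$.

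On $K$ the conditional variational principle now gives that $\alpha'\mapsto\sup\{h_\nu(\Phi):\nu\in\mathcal{M}(\Phi,K),\ \int g\,d\nu=\alpha'\}$ is concave and continuous on $\mathrm{Int}(L_g^K)$, with the supremum realized by an ergodic equilibrium state. Since $\alpha\in\mathrm{Int}(L_g^K)$ and this maximum is at least $h_{\mu_K}(\Phi)\ge c$, a one-parameter family of equilibrium states for the potentials $qg|_K$ together with the intermediate entropy property for subshifts of finite type (obtained via a Markov section of $K$) produces an ergodic $\nu\in\mathcal{M}(\Phi,K)$ with $\int g\,d\nu=\alpha$ and $h_\nu(\Phi)=c$ exactly. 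By construction $d^*(\nu,\mu)\le d^*(\nu,\mu_K)+d^*(\mu_K,\mu)<\zeta$, completing the first assertion.

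For the residual statement, set $G:=\{\mu\in M_g(\alpha):h_\mu(\Phi)\ge c\}$. Upper semi-continuity of the entropy map on basic sets makes $G$ closed in $\mathcal{M}(\Phi,\Lambda)$, hence a Baire space, and makes $\{\mu\in G:h_\mu(\Phi)<c+1/n\}$ open in $G$ for every $n$; the first part provides an ergodic measure with entropy exactly $c$ arbitrarily close to any point of $G$, so each of these open sets is also dense in $G$. Ergodicity is a $G_\delta$ condition in $\mathcal{M}(\Phi,\Lambda)$, and measures with $S_\mu=\Lambda$ form a $G_\delta$-dense subset by a standard specification-based argument; intersecting these countably many dense $G_\delta$ subsets of $G$ yields the claimed residual set. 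The main obstacle throughout is achieving exact matching of both the $g$-integral and the entropy while retaining ergodicity: the multi-horseshoe (rather than single-horseshoe) construction is essential precisely because a single horseshoe close to $\mu$ may fail to carry ergodic measures with $g$-integrals on both sides of $\alpha$, whereas joining two horseshoes via the specification property of the hyperbolic flow restores the flexibility needed for the intermediate-value step.
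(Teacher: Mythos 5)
Your overall plan has the right ingredients (the multi-horseshoe entropy-dense property and the conditional variational principle on horseshoes), but the crucial step where you claim to produce an ergodic measure on the horseshoe $K$ with $\int g\,d\nu=\alpha$ \emph{and} $h_\nu(\Phi)=c$ \emph{exactly} does not go through as written. You invoke ``a one-parameter family of equilibrium states for the potentials $qg|_K$ together with the intermediate entropy property for subshifts of finite type.'' The equilibrium states $\mu_q$ of $qg|_K$ trace out the boundary of the entropy--integral region: for the value $q^*$ with $\int g\,d\mu_{q^*}=\alpha$ one gets the \emph{maximum} entropy $H_K(\alpha)$ in the level set, not an arbitrary $c<H_K(\alpha)$. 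And the intermediate entropy property on the SFT gives an ergodic measure of entropy $c$ with no control whatsoever over $\int g$. Neither tool, nor their juxtaposition, forces both constraints simultaneously; you would need some additional device (e.g., a second constraint family, or a further nested-horseshoe construction shrinking $K$ until its level-set maximum at $\alpha$ drops to $c$) that is not supplied.

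The paper circumvents this by a different route: it first proves (Theorem \ref{thm-Almost-Additive}) that for every $\mu_0\in M_{A,B}(\alpha)$ and every $\eta,\zeta>0$ there is an ergodic $\nu\in M_{A,B}^{erg}(\alpha)$ with $d^*(\nu,\mu_0)<\zeta$ and $|P(\Phi,\chi,\nu)-P(\Phi,\chi,\mu_0)|<\eta$ — entropy only \emph{approximately} matching. Exact equality $h_\nu(\Phi)=c$ is then obtained category-theoretically (Lemma \ref{lemma-A} and Theorem \ref{thm-Almost-Additive2}), by writing the target set as the intersection of three residual subsets of $\{\mu\in M_{A,B}(\alpha):P\geq c\}$: ergodicity (a $G_\delta$ that is dense by the approximate result), the level set $\{P=c\}$ (residual because zero-entropy measures are dense and convex combinations hit every intermediate value, while upper semi-continuity makes each $\{P<c+1/n\}$ open), and full support (residual by Lemma \ref{lemma-B}, which needs convexity of $M_g(\alpha)$ and an explicit full-support measure \emph{inside} $M_g(\alpha)$, built by perturbing the $2^d$ bracketing measures $\mu_\xi$ to full-support measures and recombining via Corollary \ref{corollary-A}). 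Your Baire-category paragraph has the right shape, but it presupposes the first part and also waves at ``a standard specification-based argument'' for density of full-support measures inside $G$; the nontrivial point is precisely that $G$ is a proper convex subset, so one has to produce a full-support measure in $M_g(\alpha)$ itself before Lemma \ref{lemma-B} applies. In short: the ingredients are identified correctly, but the exact-entropy step is the genuine content of the theorem, and the mechanism you propose for it fails.
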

\begin{remark}
	Let $X\in\mathscr{X}^1(M)$ and $\Lambda$ be a basic set of $X$. For a continuous function $g$, let $g_{t}=\int_{0}^{t} g(\phi_\tau(x))d\tau,$ then  $(g_t)_{t\geq 0}$ is a additive family of contiunous functions. So 
	let $\chi\equiv 0$ and $d=1$ in Theorem \ref{thm-almost2}(IV),  
	we have 
	\begin{equation}\label{equation-CA}
		\mathrm{Int}\left\{(\int g d\mu, h_\mu(\Phi)):\mu\in\mathcal{M}(\Phi,\Lambda)\right\}=
		\mathrm{Int}\left\{(\int g d\mu, h_\mu(\Phi)):\mu\in \mathcal{M}_{erg}(\Phi,\Lambda)\right\}.
	\end{equation} 
\begin{figure}[h]\caption{Graph of $(\int g d\mu, h_\mu(\Phi))$}\label{fig-1}
	\begin{center}
		\begin{tikzpicture}[domain=0:2]
			\draw[->] (-1,0) -- (6,0)
			node[below right] {$\int g d\mu$};
			\draw[->] (0,-1) -- (0,4)
			node[left] {$h_\mu(\Phi)$};
			\draw(0.7,2.5) .. controls (3.5,4) .. (5.2,2.2)
			node[below] at (3,2) {$(\int g d\mu, h_\mu(\Phi))$};
			\draw[dashed] (0.7,2.5) -- (0.7,0);
			\draw[dashed] (5.2,2.2) -- (5.2,0);
		\end{tikzpicture}
	\end{center}
\end{figure}
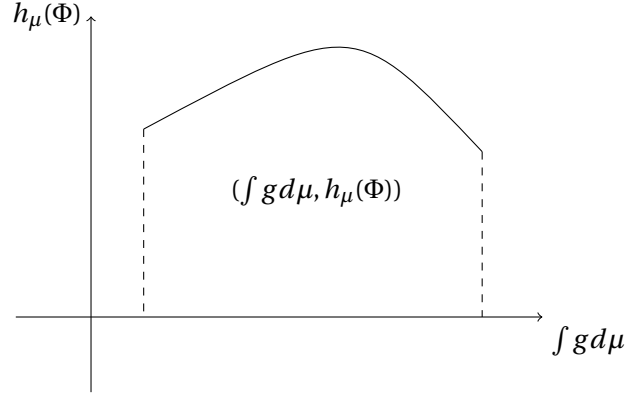
We draw the graph of $(\int g d\mu, h_\mu(\Phi))$ in Figure \ref{fig-1}. Then by (\ref{equation-CA}) every point in  interior of the region  can be attained by ergodic measures.
From (\ref{equation-CA}) , we obtain one conditional intermediate metric entropy property:
$$\mathrm{Int}\left\{h_{\mu}(\Phi):\mu\in \mathcal{M}_{erg}(\Phi,\Lambda)\text{ and }\int g d\mu=\alpha\right\}=\mathrm{Int}\left\{h_{\mu}(\Phi):\mu\in \mathcal{M}(\Phi,\Lambda) \text{ and }\int g d\mu=\alpha\right\},$$ 
and one conditional intermediate Birkhoff average property:
$$\mathrm{Int}\left\{\int g d\mu:\mu\in \mathcal{M}_{erg}(\Phi,\Lambda)\text{ and }h_{\mu}(\Phi)=c\right\}=\mathrm{Int}\left\{\int g d\mu:\mu\in \mathcal{M}(\Phi,\Lambda) \text{ and }h_{\mu}(\Phi)=c\right\}$$
for any $\alpha\in\mathrm{Int}(L_{g})$ and any $c\in (0,h_{top}(\Lambda)).$ 
\end{remark}
\begin{remark}
	Given $X \in \mathscr{X}^1(M)$ and an invariant compact set $\Lambda$. $\mathcal{M}(\Phi,\Lambda)$ is said to have entropy-dense property if for any $\varepsilon>0$ and any $\mu \in \mathcal{M}(\Phi,\Lambda)$, there exists $v \in\mathcal{M}_{erg}(\Phi,\Lambda)$ satisfying
	$$
	d^*(\mu, v)<\varepsilon \text { and } h_\nu(\Phi)>h_\mu(\Phi)-\varepsilon.
	$$
	Given a continuous function $g$ on $\Lambda,$ we say  $\mathcal{M}(\Phi,\Lambda)$ has entropy-dense property with same $g$-level if for any $\alpha\in \mathrm{Int}(L_{g}),$ any $\mu\in M_{g}(\alpha)$ and any $\varepsilon>0$, there is $\nu\in M_{g}^{erg}(\alpha)$ such that $$
	d^*(\mu, v)<\varepsilon \text { and } h_\nu(\Phi)>h_\mu(\Phi)-\varepsilon.
	$$
	By Theorem \ref{Thm:basic-set1},  if $\Lambda$ is a basic set of $X$ and $g$ is a continuous function on $\Lambda$, then $\mathcal{M}(\Phi,\Lambda)$ has entropy-dense property with same $g$-level. Moreover,  we can choose $\nu\in M_{g}^{erg}(\alpha)$ such that $d^*(\mu, v)<\varepsilon$ and $
	h_\nu(\Phi)=h_\mu(\Phi)
	$ if $\mu\in M_{g}(\alpha)\setminus M_{g}^{top}(\alpha).$
\end{remark}

Given two continuous function $g,$ $h$ on a compact invariant subset $\Lambda$ of the $C^1$ flow $(\phi_t)_{t\in\mathbb{R}},$ we denote $$L_{g,h}=\{(\int g d\mu, \int h d\mu):\mu\in \mathcal{M}(\Phi,\Lambda)\}.$$
Then $L_{g,h}$ is a nonempty convex compact subset of $\mathbb{R}^2.$
For any $\alpha\in  L_{g,h},$ denote  $$M_{g,h}(\alpha)=\left\{\mu\in \mathcal{M}(\Phi,\Lambda):(\int g d\mu, \int h d\mu)=\alpha\right\},\ M_{g,h}^{erg}(\alpha)=M_{g,h}(\alpha)\cap \mathcal{M}_{erg}(\Phi,\Lambda).$$ 
Now we state our second main result.
\begin{theoremalph}\label{Thm:basic-set2}
	Let $X\in\mathscr{X}^1(M)$ and $\Lambda$ be a basic set of $X$.  If   $g,h$ are continuous functions on $\Lambda$,  then for any $\alpha\in \mathrm{Int}(L_{g,h}),$ any $\mu\in M_{g,h}(\alpha)$  and any $\zeta>0$, there is $\nu\in M_{g,h}^{erg}(\alpha)$ such that $d^*(\nu,\mu)<\zeta.$ Moreover, for  any $\alpha\in \mathrm{Int}(L_{g,h}),$  the set $\{\mu\in M_{g,h}^{erg}(\alpha):S_\mu=\Lambda\}$ is residual in $M_{g,h}(\alpha).$
\end{theoremalph}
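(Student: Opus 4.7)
The strategy is parallel to that of Theorem~\ref{Thm:basic-set1}: I plan to couple the ``multi--horseshoe'' entropy--dense construction announced in the abstract with a linear--algebra argument on the image map
\[
\Pi\colon\mathcal{M}(\Phi,\Lambda)\to\RR^{2},\qquad \Pi(\mu):=\Bigl(\int g\,d\mu,\int h\,d\mu\Bigr).
\]
The new feature compared with Theorem~\ref{Thm:basic-set1}, where entropy provided a free scalar parameter, is that the constraint $\Pi(\nu)=\alpha$ is two--dimensional: instead of bracketing $\alpha$ along a segment one must bracket it by three measures forming a non--degenerate triangle, and this is exactly what the interior hypothesis $\alpha\in\mathrm{Int}(L_{g,h})$ provides.

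Fix $\mu\in M_{g,h}(\alpha)$ and $\zeta>0$. I would first choose three invariant measures $\mu_{1},\mu_{2},\mu_{3}\in\mathcal{M}(\Phi,\Lambda)$ whose images $\alpha_{i}=\Pi(\mu_{i})$ are the vertices of a small triangle $T\subset L_{g,h}$ with $\alpha\in\mathrm{Int}(T)$, and replace $\mu$ by the convex combination $\mu_{\eta}=(1-\eta)\mu+\tfrac{\eta}{3}(\mu_{1}+\mu_{2}+\mu_{3})$ for small $\eta>0$, which keeps $d^{*}(\mu_{\eta},\mu)<\zeta/4$ and $\Pi(\mu_{\eta})$ very close to $\alpha$. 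Applying the multi--horseshoe construction to the finite list $\{\mu_{\eta},\mu_{1},\mu_{2},\mu_{3}\}$ then yields a single basic sub-horseshoe $\Lambda'\subset\Lambda$ carrying ergodic measures $\nu_{0}:=\nu_{\eta},\nu_{1},\nu_{2},\nu_{3}$ that approximate the original list arbitrarily well in $d^{*}$. The specification property on $\Lambda'$ allows one to realise any rational convex combination $\sum_{i=0}^{3}t_{i}\nu_{i}$ as a weak--$*$ limit of periodic (hence ergodic) measures supported on $\Lambda'$, by concatenating long orbit segments tracking each $\nu_{i}$ in the prescribed proportions. Because $\alpha\in\mathrm{Int}(\mathrm{conv}\{\Pi(\nu_{0}),\Pi(\nu_{1}),\Pi(\nu_{2}),\Pi(\nu_{3})\})$, the linear system
\[
\sum_{i=0}^{3}t_{i}\Pi(\nu_{i})=\alpha,\qquad \sum_{i=0}^{3}t_{i}=1,\quad t_{i}>0,
\]
has a strictly positive solution with $t_{0}$ close to $1$; rationally approximating this solution and performing a last small balancing segment produces the desired ergodic $\nu\in M_{g,h}^{erg}(\alpha)$ with $d^{*}(\nu,\mu)<\zeta$.

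The main obstacle I expect is exactly this last matching step: producing ergodic measures with $\Pi(\nu)=\alpha$ rather than merely $\Pi(\nu)$ close to $\alpha$. The natural route is a two--dimensional intermediate--value argument applied to the continuous family of ergodic measures parametrised by the weights $(t_{1},t_{2},t_{3})$ on a small simplex around a nominal solution; openness of $\Pi$ restricted to this family, which follows from the triangle containing $\alpha$ in its interior, guarantees that a genuine solution exists in any $d^{*}$--neighborhood of $\mu_{\eta}$. This is precisely where bracketing $\alpha$ by three vertices (rather than two) is essential, and it is the genuine analytical new content beyond Theorem~\ref{Thm:basic-set1}.

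For the residual statement, note that $M_{g,h}(\alpha)$ is a closed, hence completely metrisable, subset of $\mathcal{M}(\Phi,\Lambda)$. Both ergodicity and full support are $G_{\delta}$ conditions in the weak--$*$ topology: ergodicity is the intersection over a countable dense family of continuous functions of the open Birkhoff--average indecomposability conditions, while $\{\nu:S_{\nu}=\Lambda\}=\bigcap_{n}\{\nu:\nu(U_{n})>0\}$ for any countable base $\{U_{n}\}$ of $\Lambda$. The first part of the theorem, together with the freedom to choose the auxiliary measures $\mu_{i}$ with $S_{\mu_{i}}=\Lambda$ (such measures exist on a basic set because it is transitive), shows that each of these open conditions is dense in $M_{g,h}(\alpha)$; Baire category on the complete metric space $M_{g,h}(\alpha)$ then yields residuality of $\{\nu\in M_{g,h}^{erg}(\alpha):S_{\nu}=\Lambda\}$.
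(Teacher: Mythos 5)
Your overall architecture (bracket $\alpha$ by a small set of auxiliary measures whose convex hull contains it, pass to a common basic sub-horseshoe via the multi-horseshoe construction, and then realise $\alpha$ exactly by an ergodic measure on the horseshoe) is the same skeleton the paper uses. But there is a genuine gap at exactly the step you flag as the ``main obstacle,'' and the resolution you sketch does not close it.

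Your proposed ``two-dimensional intermediate-value argument'' requires a continuous family $t\mapsto\nu_{t}$ of \emph{ergodic} measures on $\Lambda'$ with $\Pi(\nu_{t})$ varying continuously and covering a neighbourhood of $\alpha$. Specification does not produce such a family: for each fixed rational weight vector $t$ it gives a \emph{sequence} of periodic measures converging to the non-ergodic combination $\sum_i t_i\nu_i$, so $\Pi$ of the periodic approximants only approaches $\sum_i t_i\Pi(\nu_i)$ in the limit. There is no natural way to select one periodic/ergodic measure for each $t$ so that the resulting map is continuous, and the discreteness of periodic orbits makes a degree-theoretic argument fail without substantial additional machinery. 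You would need something like a Katok-type subhorseshoe and a continuous family of equilibrium (Gibbs) states on it, but you do not set this up.

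The paper avoids this entirely: after constructing the horseshoe $\Theta$ and finding a convex combination $\nu'$ with $\mathcal{P}_{A,B}(\nu')=\alpha$ (via Lemma~\ref{lemma-D} and Corollary~\ref{corollary-A}, which bracket $\alpha$ by $2^{d}=4$ measures, one per ``quadrant,'' rather than your triangle of $3$---either works for the bracketing), it invokes the conditional variational principle for horseshoes (Theorem~\ref{condition-principle}, due to Holanda, building on Climenhaga and Cuneo) as hypothesis (1) of Theorem~\ref{thm-Almost-Additive}. That result directly produces an ergodic measure $\nu$ supported on $\Theta$ with $\mathcal{P}_{A,B}(\nu)=\alpha$ \emph{exactly} and with entropy close to the conditional maximum. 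This is the nontrivial multifractal-analysis input that replaces the specification/IVT sketch in your proposal; it is also what lets the paper simultaneously control the entropy of $\nu$, which your argument does not attempt. Your residuality argument (ergodicity and full support are $G_{\delta}$; density follows from the first part plus a full-support auxiliary measure; Baire on the closed convex set $M_{g,h}(\alpha)$) is essentially identical to the paper's use of Lemma~\ref{lemma-B} and \cite[Propositions 5.7, 21.11]{DGS} and is fine, modulo the gap in establishing density.
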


\begin{remark}
	Let $X\in\mathscr{X}^1(M)$ and $\Lambda$ be a basic set of $X$.  If   $g,h$ are continuous functions on $\Lambda$, then by Theorem \ref{Thm:basic-set2} we have 
	$$\mathrm{Int}\left\{(\int g d\mu, \int h d\mu):\mu\in \mathcal{M}(\Phi,\Lambda)\right\}
	=\mathrm{Int}\left\{(\int g d\mu, \int h d\mu):\mu\in \mathcal{M}_{erg}(\Phi,\Lambda)\right\}.$$
    Then we obtain a  conditional intermediate Birkhoff average property: $$\mathrm{Int}\left\{\int h d\mu:\mu\in \mathcal{M}_{erg}(\Phi,\Lambda)\text{ and }\int g d\mu=\alpha\right\}=\mathrm{Int}\left\{\int h d\mu:\mu\in \mathcal{M}(\Phi,\Lambda) \text{ and }\int g d\mu=\alpha\right\}$$ for any $\alpha\in \mathrm{Int}(L_{g}).$
\end{remark}

In the process of proving Theorem \ref{Thm:basic-set1} and \ref{Thm:basic-set2},  there are two keypoints: 'multi-horseshoe' entropy-dense property (see Theorem  \ref{Mainlemma-convex-by-horseshoe5}) and  conditional variational principles (see Theorem \ref{BarreiraDoutor2009-theorem3} and Theorem \ref{condition-principle}) proved by L. Barreira and C. Holanda in  \cite{BH21,Holanda}.

\subsection*{Outline of the paper}
In Section \ref{Section:ED-HS-approximation} we introduce 'multi-horseshoe' entropy-dense property for flows  and prove that it holds for basic sets.
In Section \ref{Almost Additive}, we recall nonadditive thermodynamic formalism for flows, 
 give  abstract conditions on which the results of Theorem \ref{Thm:basic-set1} and \ref{Thm:basic-set2}  hold in the more general context of asymptotically additive families of continuous functions.
In Section \ref{section-thm} using 'multi-horseshoe' entropy-dense property we show that the abstract conditions given in  Section \ref{Almost Additive} are satisfied for basic sets, and thus we obtain Theorem \ref{Thm:basic-set1} and \ref{Thm:basic-set2}. In Section \ref{section-singular hyperbolic},  we consider singular hyperbolic attractors and give corresponding results on Question \ref{Conjecture-2}.

\section{'Multi-horseshoe' entropy-dense property}\label{Section:ED-HS-approximation}

In this section,   we introduce 'multi-horseshoe' entropy-dense property  and prove it holds for basic sets.
We  first recall  the definition of hyperbolic.
\begin{definition}\label{Def:hyp}
	Given a vector field $X\in\mathscr{X}^1(M)$, a compact $\phi_t$-invariant set $\Lambda$ is {\it hyperbolic} if $\Lambda$ admits a continuous ${\rm D}\phi_t$-invariant splitting $T_{\Lambda}M=E^s\oplus \langle X\rangle\oplus E^u$, where $\langle X\rangle$ denotes the one-dimensional linear space generated by the vector field, and $E^s$ (resp. $E^u$) is uniformly contracted (resp. expanded) by ${\rm D}\phi_t$, that is to say, there exist constants $C>0$ and $\eta>0$ such that for any $x\in\Lambda$ and any $t\geq 0$,
	\begin{itemize}
		\item  $\|{\rm D}\phi_t(v)\|\leq Ce^{-\eta t}\|v\|$, for any $v\in E^s(x)$; and
		\item  $\|{\rm D}\phi_{-t}(v)\|\leq Ce^{-\eta t}\|v\|$, for any $v\in E^u(x)$.
	\end{itemize}
	for any $x\in\Lambda$ and $t\geq 0$. A hyperbolic set $\Lambda$ is said to be {\it locally maximal} if  there exists an open neighborhood $U$ of $\Lambda$, such that $\Lambda=\cap_{t\in\RR}\phi_t(U)$)
\end{definition}
Now we give the definition of horseshoe.
\begin{definition}\label{definition-hyperbolic}
	Given $X\in\mathscr{X}^1(M)$ and a hyperbolic invariant compact  set $\Lambda$. We call $\Lambda$  a {\it horseshoe} of $\phi_{t},$ if there exists a suspension flow $f_{t}: \Delta_{\rho} \rightarrow \Delta_{\rho}$ over a transitive subshift of finite type $(\Delta,\sigma)$ with a Lipschitz roof function $\rho$, and a homeomorphism $\pi: \Delta_{\rho} \rightarrow \Lambda$, such that $\pi\circ f_t=\phi_t\circ \pi.$
\end{definition}
\begin{remark}
	Lipschitz continuity of $\rho$ is used in Lemma \ref{lemma-G}.
\end{remark}

For any $m\in\N$ and $\{\nu_i\}_{i=1}^m \subseteq \mathcal{M}(\Phi,\Lambda)$,   we denote the convex combination of $\{\nu_i\}_{i=1}^m$ by
$$\cov\{\nu_i\}_{i=1}^m:=\left\{\sum_{i=1}^m\theta_i\nu_i:\theta_i\in[0,  1],  1\leq i\leq m~\textrm{and}~\sum_{i=1}^m\theta_i=1\right\}.$$
For any nonempty subsets of $\mathcal{M}(\Phi,\Lambda),$ $A$ and $B,$ we denote the Hausdorff distance between them  by $$d_H(A,  B):=\max\set{\sup_{x\in A}\inf_{y\in B}d^*(x,  y),\sup_{y\in B}\inf_{x\in A}d^*(y, x)  }.$$
We denote by $\htop(\Lambda)$ the topological entropy of $\Lambda$, defined as the topological entropy of $\Lambda$ with respect to the time-1 map $\phi_1$ of the flow.
\begin{definition}\label{def-strong-basic}
	Given $X\in\mathscr{X}^1(M)$ and an invariant compact  set $\Lambda$. Let $\mathcal{N}\subset  \mathcal{M}(\Phi,\Lambda)$ be a nonempty set. We say $\Lambda$ satisfies the {\it 'multi-horseshoe' entropy-dense property on $\mathcal{N}$} (abbrev. {\it 'multi-horseshoe' dense property}) if for any $F=\cov\{\mu_i\}_{i=1}^m\subseteq \mathcal{N},$ and any $\eta,  \zeta>0$,   there exist compact invariant subsets $\Lambda_i\subseteq\Theta\subsetneq \Lambda$ such that for each $1\leq i\leq m$
	\begin{enumerate}
		\item $\Lambda_i$ and $\Theta$ are horseshoes.
		\item $\htop(\Lambda_i)>h_{\mu_i}(X)-\eta$ and consequently,   $\htop(\Theta)>\sup\{h_{\kappa}(X):\kappa\in F\}-\eta.$
		\item $d_H(F,  \mathcal{M}(\Phi,\Theta))<\zeta$,   $d_H(\mu_i,  \mathcal{M}(\Phi,\Lambda_i))<\zeta$.
	\end{enumerate}
    For convenience, when $\mathcal{N}=\mathcal{M}(\Phi,\Lambda)$ we say $\Lambda$ satisfies the 'multi-horseshoe' entropy-dense property.
\end{definition}

\begin{theorem}\label{Mainlemma-convex-by-horseshoe5}
	Let $X\in\mathscr{X}^1(M)$ and $\Lambda$ be a basic set of $X$. 
	Then $\Lambda$ satisfies the 'multi-horseshoe' dense property.
\end{theorem}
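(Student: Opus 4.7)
The plan is to reduce to the symbolic level via a Markov partition. Since $\Lambda$ is a basic set of a $C^1$ flow, Bowen's construction provides a transitive subshift of finite type $(\Delta,\sigma)$, a Lipschitz roof function $\rho$, and a finite-to-one semiconjugacy $\pi:\Delta_\rho\to\Lambda$ between the suspension flow and $(\phi_t)$. Lipschitz regularity of $\rho$ guarantees that the canonical measure correspondence is a weak$^*$-bicontinuous bijection between $\mathcal{M}(\sigma,\Delta)$ and $\mathcal{M}(\Phi,\Lambda)$, while Abramov's formula relates their entropies continuously. Thus it suffices to produce the analogue of Definition \ref{def-strong-basic} inside the SFT $(\Delta,\sigma)$ and then push the resulting objects forward through $\pi$.

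For the individual pieces: for each $\mu_i \in F$, pulled back to $\hat\mu_i$ on $\Delta$, entropy-density on transitive SFTs (Sigmund) supplies an ergodic $\hat\nu_i$ close to $\hat\mu_i$ in weak$^*$ with $h_{\hat\nu_i}(\sigma) > h_{\hat\mu_i}(\sigma) - \eta/2$. The classical horseshoe construction on SFTs (take a long generic trajectory of $\hat\nu_i$, collect its words, and close them up into an irreducible subshift) then produces a transitive SFT $\Sigma_i \subseteq \Delta$ with $\htop(\Sigma_i) > h_{\hat\mu_i}(\sigma) - \eta$ and $\mathcal{M}(\sigma,\Sigma_i)$ within arbitrarily small Hausdorff distance of $\hat\nu_i$. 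Lifting $\Sigma_i$ to $\Delta_\rho$ and applying $\pi$ yields horseshoes $\Lambda_i$ satisfying conditions (1) and (2), as well as the $\Lambda_i$-part of condition (3), after choosing the symbolic tolerances small enough to absorb the Abramov rescaling factor.

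To build $\Theta$ I glue the $\Sigma_i$ using transitivity of $(\Delta,\sigma)$. For each ordered pair $(i,j)$ pick an admissible word $w_{ij}$ in $\Delta$ leading from a fixed vertex of $\Sigma_i$ to a fixed vertex of $\Sigma_j$, and let $\Sigma_\Theta \subseteq \Delta$ be the SFT whose admissible sequences are concatenations of arbitrarily long blocks drawn from the $\Sigma_i$ joined by these connectors $w_{ij}$. Then $\Sigma_\Theta$ is transitive, contains every $\Sigma_i$, and is a proper subshift of $\Delta$ because its transitions are strictly constrained. Passing through the suspension and $\pi$ gives a horseshoe $\Theta$ with $\Lambda_i \subseteq \Theta \subsetneq \Lambda$; the entropy bound $\htop(\Theta) \geq \max_i \htop(\Lambda_i) > \sup\{h_\kappa(\Phi):\kappa\in F\} - \eta$ follows from $\Theta \supseteq \Lambda_i$ together with the affinity of the entropy function on the convex set $F$.

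The main obstacle lies in the Hausdorff estimate between $\mathcal{M}(\Phi,\Theta)$ and $F$. Any $\sigma$-invariant measure on $\Sigma_\Theta$ can be decomposed according to the proportions of time a generic orbit dwells inside each $\Sigma_i$ versus traversing the connectors. Since the $w_{ij}$ have bounded length while each $\Sigma_i$ has positive topological entropy and therefore admits arbitrarily long internal excursions, every invariant measure on $\Sigma_\Theta$ lies within $\zeta/2$ of a convex combination of measures on the $\Sigma_i$, which in turn lies within $\zeta/2$ of $F$ by the piecewise approximation. Conversely, every $\sum_i \theta_i \mu_i \in F$ is approximated by periodic orbits of $\Sigma_\Theta$ that dwell in $\Sigma_i$ for fraction close to $\theta_i$ of the period. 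Quantifying these two approximations uniformly, and controlling the Abramov rescaling across the pieces using the Lipschitz continuity of $\rho$, is the delicate technical heart of the argument.
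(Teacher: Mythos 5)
There is a genuine gap: you treat the Bowen semiconjugacy $\pi:\Delta_\rho\to\Lambda$ as if it induced a ``weak$^*$-bicontinuous bijection between $\mathcal{M}(\sigma,\Delta)$ and $\mathcal{M}(\Phi,\Lambda)$,'' and propose to simply ``push the resulting objects forward through $\pi$.'' But $\pi$ is only a finite-to-one surjection; the composite map on invariant measures is surjective and continuous but in general not injective, so there is no such bijection. More seriously, Definition~\ref{definition-hyperbolic} requires a horseshoe to be \emph{homeomorphic} (via a conjugating map) to a suspension flow over a transitive SFT, and the pushforward $\pi(\Xi_\rho)$ of a symbolic sub-SFT need not satisfy this: $\pi$ can identify distinct orbits on $\Xi_\rho$, exactly along $\bigcup_{t}f_t\bigl(\pi^{-1}(\bigcup_i\partial R_i)\bigr)$. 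Your construction makes no attempt to steer the symbolic horseshoes away from this bad set, so the resulting $\Lambda_i$ and $\Theta$ may fail to be horseshoes in the required sense.

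The paper's proof addresses precisely this point, and it is the genuine technical content of the reduction to the symbolic level: one picks a point $\tilde x\in\Delta$ and a radius $\tilde\eps>0$ with $B(\tilde x,\tilde\eps)\cap\pi^{-1}(\bigcup_i\partial^s R_i)=\emptyset$, and then insists (via an extra clause in the symbolic horseshoe theorem, item~(4) of Theorem~\ref{Mainlemma-convex-by-horseshoe2}) that every orbit in $\Xi$ returns within $\tilde\eps$ of $\tilde x$ infinitely often in both time directions. Combined with the forward-invariance of $\bigcup_i\partial^s R_i$ under the transfer map (relation~(\ref{equation-AB})), this forces $\Xi$ and every $\Delta_i$ to avoid $\bigcup_n\sigma^n(\pi^{-1}(\bigcup_i\partial R_i))$, so $\pi$ is injective on $\Delta^i_\rho$ and $\Xi_\rho$ and the pushforwards are genuine horseshoes. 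Without this selection mechanism the lifting step fails. The remainder of your outline (entropy density on SFTs, gluing transitive pieces, Hausdorff estimates, Abramov rescaling) is broadly parallel to what the cited Theorem~\ref{Mainlemma-convex-by-horseshoe2} and Proposition~\ref{Mainlemma-convex-by-horseshoe4} accomplish, and you rightly flag the Hausdorff control as the delicate quantitative part, but the missing ``avoid the boundary'' device is the idea that makes the final push to $\Lambda$ legitimate.
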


We  will prove Theorem \ref{Mainlemma-convex-by-horseshoe5} by three steps. In the process, we give various versions of 'multi-horseshoe' dense property on transitive subshifts of finite type, suspension flows over transitive subshifts of finite type and basic sets.
\subsection{Transitive subshifts of finite type}
For a homeomorphism $f:K\to K$ of a compact metric space $(K,d)$, we denote by $\mathcal{M}(f,K)$  the space of $f$-invariant probability measures. And let  $\htop(f,\Lambda)$  dente the topological entropy of an $f$-invariant compact set $\Lambda,$ and $h_{\mu}(f)$ denote the metric entropy of an $f$-invariant measure.
In \cite{DHT} a result on  'multi-horseshoe' dense property of homeomorphisms was obtained.
\begin{theorem}\label{Mainlemma-convex-by-horseshoe2}\cite[Theorem 2.6]{DHT}
	Suppose a homeomorphism $f:K\to K$ of a compact metric space $(K,d)$ is transitive, expansive and satisfies the shadowing property.   Then for any $F=\cov\{\mu_i\}_{i=1}^m\subseteq \mathcal{M}(f, K),$ any $x\in K$ and any $\eta,  \zeta,\eps>0$,   there exist compact invariant subsets $\Lambda_i\subseteq\Xi\subsetneq K$ such that for each $1\leq i\leq m$
	\begin{enumerate}
		\item $(\Lambda_i,f|_{\Lambda_i})$ and $(\Xi,f|_{\Xi})$ are transitive, expansive and satisfy the shadowing property.
		\item $\htop(f,  \Lambda_i)>h_{\mu_i}(f)-\eta$ and consequently,   $\htop(f,  \Xi)>\sup\{h_{\kappa}(f):\kappa\in F\}-\eta.$
		\item $d_H(F,  \mathcal{M}(f,  \Xi))<\zeta$,   $d_H(\mu_i,  \mathcal{M}(f,  \Lambda_i))<\zeta$.
		\item There is a positive integer $L$ such that for any $z\in\Xi$ one has  $d(f^{j+mL}(z),x)<\eps$ for some $0\leq j\leq L-1$ and any $m\in\Z$.
	\end{enumerate}
\end{theorem}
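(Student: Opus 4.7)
The plan combines three classical ingredients: (i) entropy-density of ergodic measures in $\mathcal{M}(f,K)$, available because transitive expansive systems with the shadowing property satisfy the periodic specification property; (ii) Katok's local entropy formula for ergodic measures; and (iii) a specification-shadowing gluing construction producing transitive subshift-of-finite-type subsystems whose invariant measures approximate a prescribed convex hull. Auxiliary parameters $\eta',\delta,n,M,N$ will be tuned at the end.

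\emph{Construction.} First, a standard entropy-density argument replaces each $\mu_i$ by an ergodic $\nu_i\in\mathcal{M}_{erg}(f,K)$ with $d^*(\nu_i,\mu_i)<\eta'$ and $h_{\nu_i}(f)>h_{\mu_i}(f)-\eta'$. Katok's formula applied to $\nu_i$ then provides, for all large $n$, an $(n,\delta)$-separated set $E_i^{(n)}\subseteq K$ of cardinality at least $\exp(n(h_{\nu_i}(f)-\eta'))$ whose elements $e$ each satisfy $d^*\bigl(\tfrac1n\sum_{k=0}^{n-1}\delta_{f^ke},\,\nu_i\bigr)<\eta'$. From transitive expansive shadowing I extract the periodic specification property with uniform gap $M$, and from transitivity a periodic orbit $O_x$ of some period $N$ passing within $\eps/2$ of $x$. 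Specification then attaches to every finite word $w=(e_1,\dots,e_r)$ with letters in $\E:=\bigcup_i E_i^{(n)}$ a periodic point of period $r(n+M)+N$ whose orbit $\delta/2$-shadows the segments of $e_1,\dots,e_r$ in order (separated by $M$-gaps) followed by one revolution around $O_x$. Let $Z_i$, $Z$ denote the sets of such periodic points obtained using words from $E_i^{(n)}$ alone, respectively from $\E$, and set $\Lambda_i:=\overline{Z_i}$, $\Xi:=\overline{Z}$.

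\emph{Verification.} Coding via $\E$ identifies $\Lambda_i$ and $\Xi$ with transitive subshifts of finite type, which are expansive and satisfy shadowing; this handles item 1. For item 2, counting periodic orbits gives $\htop(f,\Lambda_i)\ge\frac{\log|E_i^{(n)}|}{n+M+N}$, which exceeds $h_{\mu_i}(f)-\eta$ once $n$ is large and $\eta'$ is small. For item 3, every invariant measure on $\Lambda_i$ (resp.\ $\Xi$) is a weak$^*$ limit of empirical averages along concatenations of blocks from $E_i^{(n)}$ (resp.\ $\E$), each of which is $\eta'$-close to $\nu_i$ (resp.\ to a convex combination of the $\nu_i$'s), up to the vanishing $(M+N)/n$ contribution of the specification gaps; shrinking $\eta'$ and $(M+N)/n$ yields the two desired Hausdorff bounds. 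Item 4 is built into the construction: every orbit of $\Xi$ returns within $\eps$ of $x$ once per period $r(n+M)+N\le L$. Properness $\Xi\subsetneq K$ follows because any $f$-orbit avoiding the $\eps$-ball around $x$ cannot belong to $\Xi$, and such orbits exist in any nontrivial expansive transitive shadowing system.

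\emph{Main obstacle.} The delicate point is coordinating the parameter scales $\eta'\ll\min(\zeta,\eta)$ and $n\gg M+N$ so that (i) Katok's quantitative formula is usable at the chosen scale, (ii) the entropy lost to the gaps is smaller than $\eta$, and (iii) the gap contribution to Birkhoff averages is smaller than $\zeta$. Inserting a single $O_x$-loop per period, rather than after every letter, is what keeps $L$ bounded independently of the word length $r$, which is essential for item 4.
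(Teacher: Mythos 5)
The paper does not prove this theorem; it is cited from [DHT, Theorem 2.6], so no internal comparison is possible. Judged on its own merits, your strategy (entropy density and Katok's formula produce well-separated orbit segments near each $\nu_i$; periodic specification glues them, with one loop near $x$, into a subshift-of-finite-type horseshoe) is the standard and correct route to a statement of this type, and items 1--3 would follow from it after the usual bookkeeping. However, your construction does not deliver item~4 as written, and the remark meant to justify it is backwards.

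You build $Z$ from periodic points of period $r(n+M)+N$, one for each word $w=(e_1,\dots,e_r)$ of \emph{arbitrary} length $r$, inserting a single $O_x$-passage at the end of the whole word. Such an orbit is near $x$ only once per period, and the period $r(n+M)+N$ is unbounded in $r$. Consequently there is no single integer $L$ for which every $z\in\Xi=\overline{Z}$ satisfies $d(f^{j+mL}(z),x)<\eps$ for all $m\in\Z$: take $z$ a periodic point coming from a word of length $r$ with $r(n+M)+N>L$; in a window of length $L$ that sits strictly between two $O_x$-passages, the orbit never approaches $x$ (for $\delta,\eps$ small relative to the $(n,\delta)$-separation), so no admissible $j$ exists. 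Your closing sentence, that ``inserting a single $O_x$-loop per period, rather than after every letter, is what keeps $L$ bounded independently of the word length $r$,'' is exactly the opposite of what is needed: the loop must be inserted after \emph{every} letter $e\in\E$, so that the super-block $(e,\text{gap},O_x,\text{gap})$ has a fixed length $L=n+M+N$ (up to the convention on gaps), $\Xi$ is the set of orbits shadowing bi-infinite concatenations of such super-blocks, and the $O_x$-visit recurs with exact period $L$ along every orbit of $\Xi$. With that fix the entropy loss is $(M+N)/L\to 0$ and the measure contamination by $\delta_{O_x}$ is $O((M+N)/L)$, so items 2 and 3 survive; and $\Xi\subsetneq K$ follows after shrinking $\eps$ (harmless, since it only strengthens item~4) so that some periodic orbit of $K$ misses $B(x,\eps)$.

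Two smaller points you should make explicit if you write this up: (i) the claim that $\overline{Z_i}$, $\overline{Z}$ ``are'' transitive SFTs needs expansivity to make the shadowing-coding a conjugacy (separation constant above the expansivity constant, shadowing error below it); and (ii) for the second half of $d_H(F,\mathcal{M}(f,\Xi))<\zeta$ you should note that an arbitrary invariant measure on $\Xi$ is approximated by periodic measures of the SFT, whose empirical averages are $\eta'+O((M+N)/L)$-close to $\sum_i p_i\nu_i$ where $p_i$ is the block frequency from $E_i^{(n)}$, hence close to a point of $F$; the other half follows from convexity of $\mathcal{M}(f,\Xi)$ plus $d_H(\mu_i,\mathcal{M}(f,\Lambda_i))<\zeta$.
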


Let $k$ be a poitive integer. Consider the two-side full symbolic space $$\Sigma=\prod_{-\infty}^{\infty} \{0, 1, \ldots, k-1\},$$ and the shift homeomorphism $\sigma: \Sigma \to \Sigma$ defined by $$(\sigma(w))_{n}=w_{n+1},$$ where $w =\left(w_{n}\right)_{-\infty}^{\infty}.$ A metric on $\Sigma$ is defined by $d(x, y)=2^{-m}$ if $m$ is the largest positive integer with $x_{n}=y_{n}$ for any $|n|<m$, and $d(x, y)=1$ if $x_{0} \neq y_{0}.$ If $\Delta$ is a closed subset of $\Sigma$ with $\sigma (\Delta)=\Delta,$ then $\left.\sigma\right|_{\Delta}: \Lambda \to \Delta$ is called a subshift. We usually write this as $\sigma: \Delta \to\Delta.$ A subshift $\sigma: \Delta\rightarrow \Delta$ is said to be of finite type if there exists some positive integer $N$ and a collection of blocks of length $N+1$ with the property that $x=\left(x_{n}\right)_{-\infty}^{\infty} \in \Delta$ if and only if each block $\left(x_{i}, \ldots, x_{i+N}\right)$ in $x$ of length $N+1$ is one of the prescribed blocks. 

Recall from \cite{Walters2} a subshift satisfies the shadowing property if and only if it is a subshift of finite type. As a subsystem of two-side full shift, it is expansive. So we have the following.
\begin{corollary}\label{Mainlemma-convex-by-horseshoe3}
	Suppose  $\sigma:\Delta\to \Delta$ is a transitive subshift of finite type.   Then for any $F=\cov\{\mu_i\}_{i=1}^m\subseteq \mathcal{M}(\sigma,  \Delta),$ any $x\in \Delta$ and any $\eta,  \zeta,\eps>0$,   there exist compact invariant subsets $\Delta_i\subseteq\Xi\subsetneq \Delta$ such that for each $1\leq i\leq m$
	\begin{enumerate}
		\item $(\Delta_i,\sigma|_{\Delta_i})$ and $(\Xi,\sigma|_{\Xi})$ are transitive subshifts of finite type.
		\item $\htop(\sigma,  \Delta_i)>h_{\mu_i}(\sigma)-\eta$ and consequently,   $\htop(\sigma,  \Xi)>\sup\{h_{\kappa}(\sigma):\kappa\in F\}-\eta.$
		\item $d_H(F,  \mathcal{M}(\sigma,  \Xi))<\zeta$,   $d_H(\mu_i,  \mathcal{M}(\sigma,  \Delta_i))<\zeta$.
		\item There is a positive integer $L$ such that for any $z\in\Xi$ one has  $d(\sigma^{j+mL}(z),x)<\eps$ for some $0\leq j\leq L-1$ and any $m\in\Z$.
	\end{enumerate}
\end{corollary}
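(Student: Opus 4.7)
The plan is to obtain Corollary \ref{Mainlemma-convex-by-horseshoe3} as a direct specialization of Theorem \ref{Mainlemma-convex-by-horseshoe2}, with the only subtle point being how to upgrade the conclusion ``expansive and satisfies the shadowing property'' to ``subshift of finite type''.

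First I would check the hypotheses of Theorem \ref{Mainlemma-convex-by-horseshoe2} for $(\Delta,\sigma)$. By assumption, $\sigma:\Delta\to\Delta$ is a transitive subshift of finite type. As a closed $\sigma$-invariant subset of the two-sided full shift $\Sigma$ on $k$ symbols, it inherits expansiveness (with expansivity constant, say, any number less than $1$ in the metric $d$). The shadowing property is immediate from the classical characterization recalled in the excerpt from \cite{Walters2}: a subshift satisfies the shadowing property if and only if it is of finite type. Hence $(\Delta,\sigma)$ is transitive, expansive and satisfies the shadowing property.

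Next, I would apply Theorem \ref{Mainlemma-convex-by-horseshoe2} with $f=\sigma$, $K=\Delta$, and the given data $F=\cov\{\mu_i\}_{i=1}^m$, $x\in\Delta$, $\eta,\zeta,\eps>0$. This directly produces compact invariant subsets $\Delta_i\subseteq\Xi\subsetneq\Delta$ together with a positive integer $L$ satisfying items (2), (3) and (4) of the corollary verbatim, as well as the weaker version of item (1) which only asserts that $(\Delta_i,\sigma|_{\Delta_i})$ and $(\Xi,\sigma|_{\Xi})$ are transitive, expansive and satisfy the shadowing property.

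The step that still requires a brief justification, and which I expect to be the only non-automatic part, is promoting these subsystems to subshifts of finite type. Since $\Delta_i$ and $\Xi$ are closed $\sigma$-invariant subsets of $\Delta\subseteq\Sigma$, each of them is a subshift in the sense recalled before the corollary. Combined with the shadowing property provided by Theorem \ref{Mainlemma-convex-by-horseshoe2}, the Walters characterization \cite{Walters2} invoked in the excerpt then forces each of $(\Delta_i,\sigma|_{\Delta_i})$ and $(\Xi,\sigma|_{\Xi})$ to be a subshift of finite type. Transitivity is transported directly from Theorem \ref{Mainlemma-convex-by-horseshoe2}, completing item (1) and hence the proof of the corollary.
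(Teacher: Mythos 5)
Your proposal is correct and is essentially the same argument the paper uses: verify that a transitive SFT is transitive, expansive, and has shadowing (citing \cite{Walters2}), apply Theorem \ref{Mainlemma-convex-by-horseshoe2}, and then use the Walters characterization again in the reverse direction to upgrade the resulting expansive-shadowing subsystems (which are subshifts, being closed invariant subsets of $\Sigma$) to subshifts of finite type.
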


\subsection{Suspension flows over transitive subshifts of finite type}\label{subsec:suspension}
Let $f\colon K\to K$ be a homeomorphism on a compact metric space $(K,d)$ and consider a continuous roof function $\rho\colon K\to(0,+\infty)$.
We define the suspension space to be
$$K_{\rho}=\left\{(x,s)\in K\times [0,+\infty) \colon 0\leq s\leq \rho(x)\right\}/\sim,$$
where the equivalence relation $\sim$ identifies $(x,\rho(x))$ with $(f(x),0)$, for all $x\in K$. 
Denote
$\pi$
 the quotient map from $K\times[0,+\infty)$ to $K_{\rho}$. 
We define the suspension flow over  $f\colon K\to K$ with roof function $\rho$ by
$$f_t(x,s)=\pi(x,s+t).$$
For any function $g\colon K_{\rho}\to \R$, we associate the function $\varphi_g\colon K\to\R$ by $$\displaystyle\varphi_g(x)=\int_0^{\rho(x)}g(x,t){\rm d}t.$$ Since the roof function $\rho$ is continuous, $\varphi_g$ is continuous as long as $g$ is. Moreover, to each invariant probability measure $\mu$ we associate the measure $\mu_{\rho}$ given by
$$\displaystyle\int_{K_{\rho}}g {\rm d}\mu_{\rho}=\frac{\int_K\varphi_g {\rm d}\mu}{\int_K\rho {\rm d}\mu} \qquad \forall g\in C(K_{\rho},\R). $$
Observe that not only the measure $\mu_{\rho}$ is $\F$-invariant (i.e. $\mu_{\rho}(f_t^{-1}A)=\mu_{\rho}(A)$ for all $t\geq0$ and measurable sets $A$), but also using that $\rho$ is bounded away from zero, the map 
$$
\mathcal{R}\colon\mathcal{M}(f,K)\to \mathcal{M}(\F,K_{\rho}) \quad\text{given by} \quad \mu\mapsto\mu_{\rho}
$$ 
is a bijection. Abramov's theorem \cite{Abramov,Parry-Pollicott} states that 
$\displaystyle h_{\mu_{\rho}}(\F)=h_{\mu}(f)/\int\rho {\rm d }\mu$ and hence,
the topological entropy $\htop(\F)$ of the flow satisfies
$$
\htop(\F)=\sup\Big\{h_{\mu_{\rho}}(\F)\colon\mu_{\rho}\in \mathcal{M}(\F,K_{\rho})\Big\}=\sup\left\{\frac{h_{\mu}(f)}{\int\rho {\rm d}\mu}\colon\mu\in \mathcal{M}(f,K)\right\}.
$$ 
Throughout we will use the notation $\Phi=(\phi_t)_{t}$ for a flow on a compact metric space and  
$\F=(f_t)_{t}$ for a suspension flow.

Consider a suspension flow $f_{t}: \Delta_{\rho} \rightarrow \Delta_{\rho}$ over a transitive subshift of finite type $(\Delta,\sigma)$ with a continuous roof function $\rho.$ A metric on $\Delta$ is defined by $d(x, y)=2^{-m}$ if $m$ is the largest positive integer with $x_{n}=y_{n}$ for any $|n|<m$, and $d(x, y)=1$ if $x_{0} \neq y_{0}.$ There is a natural metric $d_{\Delta_\rho}$, known as the Bowen-Walters metric and we have $d_{\Delta_\rho}((x,0),(y,0))=d(x,y)$ for any $x,y\in\Delta$ by \cite{BS00b}.  Now we state a result on 'multi-horseshoe' dense property of suspension flows.
\begin{proposition}\label{Mainlemma-convex-by-horseshoe4}
	Suppose  $f_{t}: \Delta_{\rho} \rightarrow \Delta_{\rho}$ is a suspension flow over a transitive subshift of finite type $(\Delta,\sigma)$ with a continuous roof function $\rho.$   Then for any $F=\cov\{\mu^i_\rho\}_{i=1}^m\subseteq \mathcal{M}(\F,  \Delta_\rho),$ any $x\in \Delta$ and any $\eta,  \zeta,\eps>0$,   there exist compact $\F$-invariant subsets $\Delta^i_\rho\subseteq\Xi_\rho\subsetneq \Delta_\rho$ such that for each $1\leq i\leq m$
	\begin{enumerate}
		\item $f_{t}|_{\Delta^i_\rho}: \Delta^i_\rho \rightarrow \Delta^i_\rho$  and $f_{t}|_{\Xi_\rho}: \Xi_\rho\rightarrow \Xi_\rho$ are suspension flows over transitive subshifts of finite type $(\Delta_i,\sigma|_{\Delta_i})$ and $(\Xi,\sigma|_{\Xi})$  with the roof function $\rho.$ 
		\item $\htop(\Delta^i_\rho)>h_{\mu_\rho^i}(\F)-\eta$ and consequently,   $\htop(\Xi_\rho)>\sup\{h_{\kappa}(\F):\kappa\in F\}-\eta.$
		\item $d_H(F,  \mathcal{M}(\F,  \Xi_\rho))<\zeta$,   $d_H(\mu_\rho^i,  \mathcal{M}(\F,  \Delta^i_\rho))<\zeta$.
		\item For any $z\in\Xi$ there is an increasing sequence $\{t_m\}_{m=-\infty}^{+\infty}$ such that $f_{t_m}((z,0))\in \Delta\times \{0\},$  $d_{\Delta_{\rho}}(f_{t_m}((z,0)),(x,0))<\eps$ for any $m\in\Z,$ and $\lim\limits_{m\to+\infty}t_m=\lim\limits_{m\to-\infty}-t_m=+\infty$.
	\end{enumerate}
\end{proposition}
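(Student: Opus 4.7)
The plan is to transfer the problem from the suspension flow $\F$ to the base subshift $(\Delta,\sigma)$ via the bijection $\mathcal{R}\colon\mathcal{M}(\sigma,\Delta)\to\mathcal{M}(\F,\Delta_\rho)$, $\mu\mapsto\mu_\rho$. Since $\rho$ is continuous and positive on the compact set $\Delta$, it is bounded below by some $\rho_{\min}>0$, and a standard argument (using that both sides are compact Hausdorff) shows that $\mathcal{R}$ is a homeomorphism. Writing $\mu^i=\mathcal{R}^{-1}(\mu_\rho^i)$, a direct computation shows that $\mathcal{R}$ sends convex combinations to convex combinations after a reweighting: if $\mu=\sum_i\theta_i\mu^i$, then $\mu_\rho=\sum_i\theta'_i\mu_\rho^i$ with $\theta'_i=\theta_i\int\rho\,d\mu^i/\sum_j\theta_j\int\rho\,d\mu^j$, so $\mathcal{R}(\cov\{\mu^i\}_{i=1}^m)=F$.

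\textbf{Main step.} Apply Corollary \ref{Mainlemma-convex-by-horseshoe3} to $(\Delta,\sigma)$ with the measures $\{\mu^i\}_{i=1}^m$, the point $x$, and auxiliary parameters $\eta',\zeta',\eps$ (with $\eta',\zeta'$ to be fixed below), producing transitive subshifts of finite type $\Delta_i\subseteq\Xi\subsetneq\Delta$. Let $\Delta^i_\rho$ and $\Xi_\rho$ denote the suspension spaces of $(\Delta_i,\sigma|_{\Delta_i})$ and $(\Xi,\sigma|_{\Xi})$ with roof function $\rho$ restricted to the respective bases; these are proper compact $\F$-invariant subsets of $\Delta_\rho$, so item (1) of the proposition holds by construction. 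Under $\mathcal{R}$ we have $\mathcal{R}(\mathcal{M}(\sigma,\Xi))=\mathcal{M}(\F,\Xi_\rho)$ and $\mathcal{R}(\mathcal{M}(\sigma,\Delta_i))=\mathcal{M}(\F,\Delta^i_\rho)$, so by uniform continuity of $\mathcal{R}$ on the compact space $\mathcal{M}(\sigma,\Delta)$, the Hausdorff bounds of Corollary \ref{Mainlemma-convex-by-horseshoe3}(3) yield the desired bounds of item (3) provided $\zeta'$ is chosen small enough in terms of $\zeta$ and the modulus of continuity of $\mathcal{R}$.

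\textbf{Entropy.} For item (2), pick a measure of maximum entropy $\nu^i\in\mathcal{M}(\sigma,\Delta_i)$, so that $h_{\nu^i}(\sigma)=\htop(\sigma,\Delta_i)>h_{\mu^i}(\sigma)-\eta'$ by Corollary \ref{Mainlemma-convex-by-horseshoe3}(2). Since Corollary \ref{Mainlemma-convex-by-horseshoe3}(3) also forces $\nu^i$ to be $\zeta'$-close to $\mu^i$ in the weak$^*$ topology, the weak$^*$ continuity of $\mu\mapsto\int\rho\,d\mu$ yields $|\int\rho\,d\nu^i-\int\rho\,d\mu^i|$ as small as we wish. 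By Abramov's theorem $h_{\nu^i_\rho}(\F)=h_{\nu^i}(\sigma)/\int\rho\,d\nu^i$, and the map $(b,B)\mapsto b/B$ is continuous at $(h_{\mu^i}(\sigma),\int\rho\,d\mu^i)$ since its denominator is bounded below by $\rho_{\min}>0$. Therefore, choosing $\eta',\zeta'$ sufficiently small, we obtain
\[
\htop(\Delta^i_\rho)\;\geq\;h_{\nu^i_\rho}(\F)\;>\;h_{\mu_\rho^i}(\F)-\eta,
\]
which gives item (2).

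\textbf{Returning times and main obstacle.} For any $z\in\Xi$, Corollary \ref{Mainlemma-convex-by-horseshoe3}(4) yields $j\in\{0,\dots,L-1\}$ with $d(\sigma^{j+mL}z,x)<\eps$ for every $m\in\Z$. Setting $n_m=j+mL$ and $t_m=S_{n_m}(z)$, where $S_n(z)=\sum_{k=0}^{n-1}\rho(\sigma^k z)$ for $n\geq 0$ (with the analogous sign convention for $n<0$), one has $f_{t_m}(z,0)=(\sigma^{n_m}z,0)\in\Delta\times\{0\}$, and the Bowen--Walters identity $d_{\Delta_\rho}((y,0),(x,0))=d(y,x)$ gives $d_{\Delta_\rho}(f_{t_m}(z,0),(x,0))<\eps$. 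Strict monotonicity of $\{t_m\}$ and $\lim_{m\to\pm\infty}(\pm t_m)=+\infty$ follow from $\rho\geq\rho_{\min}>0$. The principal technical obstacle in the scheme is the interplay between the weak$^*$ topology and the nonlinearity introduced by Abramov's formula, which is handled simply by quantifying in advance how small $\eta'$ and $\zeta'$ must be in terms of $\eta,\zeta$ and the uniform bounds on $\rho$.
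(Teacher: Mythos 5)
Your proof is correct and follows essentially the same approach as the paper's: transfer the problem to the base subshift via the bijection $\mathcal{R}$, apply Corollary \ref{Mainlemma-convex-by-horseshoe3} with suitably shrunken parameters, and carry the conclusions back to the suspension using Abramov's formula and the (uniform) continuity of $\mathcal{R}$. One small imprecision: for item (2) you invoke joint continuity of $(b,B)\mapsto b/B$ at $(h_{\mu^i}(\sigma),\int\rho\,d\mu^i)$, but you only control $h_{\nu^i}(\sigma)$ from below, not two-sidedly; the needed one-sided estimate $h_{\nu^i}(\sigma)/\int\rho\,d\nu^i \geq (h_{\mu^i}(\sigma)-\eta')/(\int\rho\,d\mu^i+\delta') > h_{\mu^i_\rho}(\F)-\eta$ still goes through (this is exactly the inequality the paper quantifies in advance), so the gap is presentational rather than mathematical.
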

\begin{proof}
	Each $\F$-invariant probability measure $\mu_\rho^i$ is determined by a $\sigma$-invariant probability measure $\mu_i$. 
	Take $\tilde{\zeta},\tilde{\eta}>0$ small enough such that if $\mu,\nu\in\mathcal{M}(\sigma,\Delta)$ satisfy $d^*(\mu,\nu)<\tilde{\zeta},$ then one has 
	\begin{equation}\label{equation-AA}
		d^*(\mathcal{R}(\mu),\mathcal{R}(\nu))<\zeta \text{ and }\frac{\int \rho d\mu}{\int\rho d\nu}(h_{\mathcal{R}(\mu)}(\F)-\frac{2\tilde{\eta}}{\int\rho d \mu})>h_{\mathcal{R}(\mu)}(\F)-\eta.
	\end{equation}
    For the $\tilde{F}=\cov\{\mu_i\}_{i=1}^m\subseteq \mathcal{M}(\sigma,\Delta),$ $x\in \Delta$ and  $\tilde{\eta},  \tilde{\zeta},\eps>0,$ by Corollary \ref{Mainlemma-convex-by-horseshoe3} there exist compact invariant subsets $\Delta_i\subseteq\Xi\subsetneq \Delta$ such that for each $1\leq i\leq m$
    \begin{enumerate}
    	\item[(a)] $(\Delta_i,\sigma|_{\Delta_i})$ and $(\Xi,\sigma|_{\Xi})$ are transitive subshifts of finite type.
    	\item[(b)] $\htop(\sigma,  \Delta_i)>h_{\mu_i}(\sigma)-\tilde{\eta}$ and consequently,   $\htop(\sigma,  \Xi)>\sup\{h_{\kappa}(\sigma):\kappa\in F\}-\tilde{\eta}.$
    	\item[(c)] $d_H(\tilde{F},  \mathcal{M}(\sigma,  \Xi))<\tilde{\zeta}$,   $d_H(\mu_i,  \mathcal{M}(\sigma,  \Delta_i))<\tilde{\zeta}$.
    	\item[(d)] There is a positive integer $L$ such that for any $z\in\Xi$ one has  $d(\sigma^{j+mL}(z),x)<\eps$ for some $0\leq j\leq L-1$ and any $m\in\Z$.
    \end{enumerate}
    Then by item (a)(d) we have item 1 and 4. By (\ref{equation-AA}) and item (c) we have $d_H(\mu_\rho^i,  \mathcal{M}(\F,  \Delta^i_\rho))<\zeta$ and thus  $F$ is contained in the $\zeta$-neighborhood of $\mathcal{M}(\F,  \Xi_\rho).$ Note that for any $\theta_i\in[0,  1]$ with $\sum_{i=1}^m\theta_i=1$ one has
    $$\mathcal{R}(\sum_{i=1}^{m}\theta_i\mu_i)=\sum_{i=1}^{m}\frac{\theta_i\int \rho d\mu_i}{\sum_{j=1}^{m}\theta_j\int\rho d\mu_j}\mathcal{R}(\mu_i).$$
    So $\mathcal{M}(\F,  \Xi_\rho)$ is contained in the $\zeta$-neighborhood of $F$ and thus we have item 3.
    
    By the variational principle of the topological entropy there is $\nu_i\in\mathcal{M}(\sigma,\Delta_i)$ such that $h_{\nu_i}(\sigma)>\htop(\sigma,  \Delta_i)-\tilde{\eta}>h_{\mu_i}(\sigma)-2\tilde{\eta}.$ Then
    \begin{equation*}
    	\htop(\Delta^i_\rho)\geq h_{\mathcal{R}(\nu_i)}(\F)=\frac{h_{\nu_i}(\sigma)}{\int \rho d\nu_i} >\frac{\int\rho {\rm d}\mu_i}{\int\rho {\rm d}\nu_i} \cdot \Big(h_{\mu^i_{\rho}}(\F) -\frac{2\tilde{\eta}}{\int\rho {\rm d}\mu_i}\Big) >h_{\mu^i_{\rho}}(\F) -\eta
    \end{equation*}
    by (\ref{equation-AA}). We obtain item 2.
\end{proof}

\subsection{Basic  sets and proof of Theorem \ref{Mainlemma-convex-by-horseshoe5}}\label{section-basic set}

Following the classical arguments of Bowen~\cite{Bowen72,Bowen73} on 
Axiom A vector fields,  every basic set is semi-conjugate to a suspension flow over a transitive subshift of finite type with a continuous roof function.  Now we recall some basic results from \cite{Bowen72,Bowen73}. Readers can also see \cite{BS00b}.

Given $X\in\mathscr{X}^1(M)$, and a basic set $\Lambda.$  By \cite[Theorem 2.5]{Bowen73} there is a family of closed sets $R_{1}, \ldots, R_{k}$ and a  positive number $\alpha$ so that  the following properties hold:
\begin{description}
	\item[(1)] $\Lambda=\bigcup_{t \in[- \alpha,0]} \phi_{t}\left(\bigcup_{i=1}^{k} R_{i}\right).$
	\item[(2)] There exists a transitive subshift of finite type $(\Delta,\sigma)$ and a continuous and onto map $\pi:\Delta\to \bigcup_{i=1}^{k} R_{i}$ such that $\pi\circ\sigma=T\circ \pi$ ($T$ is the transfer map whose definition will be recalled later).
	\item[(3)] $\pi$ is one-to-one off $\bigcup_{n\in\mathbb{Z}} \sigma^n\left(\pi^{-1}(\bigcup_{i=1}^{k} \partial R_{i})\right)$.
\end{description}
By item (1) we define the transfer function $\tau: \Lambda \rightarrow[0, \infty)$ by
$$
\tau(x)=\min \left\{t>0: \phi_{t}(x) \in \bigcup_{i=1}^{k} R_{i}\right\}.
$$
There is a positive number $\beta$ such that $\tau(x)\geq \beta$ for any $x\in \Lambda.$ And there is a metric on $\Delta$ such that $\tau \circ \pi$ is Lipschitz. Let $T: \Lambda \rightarrow \bigcup_{i=1}^{k} R_{i}$ be the transfer map given by $T(x)=\phi_{\tau(x)}(x)$. The restriction of $T$ to $\bigcup_{i=1}^{k} R_{i}$ is invertible and $\bigcup_{i=1}^{k} R_{i}$ is a $T$-invariant set.
We can obtain a suspension flow $f_t:\Delta_\rho\to\Delta_\rho$ over the transitive subshift of finite type $(\Delta,\sigma)$ with Lipschitz roof function $\rho=\tau \circ \pi.$
Then we extend $\pi$ to a finite-to-one surjection $\pi: \Delta_{\rho} \rightarrow \Lambda$ by $\pi(x, s)=\left(\phi_{s} \circ \pi\right)(x)$ for every $(x, s) \in \Delta_{\rho}.$ We have
$$
\pi \circ f_{t}=\phi_{t} \circ \pi
$$
and $\pi$ is one-to-one off $\bigcup_{t\in\mathbb{R}} f_t\left(\pi^{-1}(\bigcup_{i=1}^{k} \partial R_{i})\right)$.

The boundary  of every $R_i$ consists of two part $\partial R_i=\partial^s R_i\cup \partial^u R_i.$ Denote 
$$\Delta^{s} \Lambda=\phi_{[0, \alpha]} (\bigcup_{i=1}^{k}\partial^s R_i)\text{ and }
\Delta^{u} \Lambda=\phi_{[-\alpha,0]} (\bigcup_{i=1}^{k}\partial^u R_i).$$
By \cite[Proposition 2.6]{Bowen73}, one has $\phi_t(\Delta^{s} \Lambda)\subset \Delta^{s} \Lambda$ and $\phi_{-t}(\Delta^{u} \Lambda)\subset \Delta^{u} \Lambda$ for any $t\geq 0.$ In fact, in the proof of \cite[Proposition 2.6]{Bowen73}, it's also shown that 
\begin{equation}\label{equation-AB}
	T(\bigcup_{i=1}^{k}\partial^s R_i)\subset \bigcup_{i=1}^{k}\partial^s R_i\text { and }T^{-1}(\bigcup_{i=1}^{k}\partial^u R_i)\subset \bigcup_{i=1}^{k}\partial^u R_i.
\end{equation}

Now we give the proof of Theorem \ref{Mainlemma-convex-by-horseshoe5}.

\noindent{\bf Proof of Theorem \ref{Mainlemma-convex-by-horseshoe5}}: 
	Let $f_t:\Delta_\rho\to\Delta_\rho$ be the associated suspension flow of the basic set $\Lambda$. Since $\pi^{-1}(\cup_{i=1}^{k}\partial^s R_i)$ is a proper closed subset of $\Delta,$ then there is $\tilde{x}\in \Delta$ and $\tilde{\eps}>0$ such that $$B(\tilde{x},\tilde{\eps})\cap\pi^{-1}( \bigcup_{i=1}^{k}\partial^s R_i)=\emptyset,$$ where $B(\tilde{x},\tilde{\eps})=\{y\in \Delta: d(\tilde{x},y)<\tilde{\eps}\}.$ 
	\begin{claim}\label{claim-1}
		If there is a point $z\in \Delta$ and  an increasing sequence $\{t_m\}_{m=-\infty}^{+\infty}$ so that $f_{t_m}((z,0))\in \Delta\times \{0\},$  $d_{\Delta_{\rho}}(f_{t_m}((z,0)),(\tilde{x},0))<\tilde{\eps}$ for any $m\in\Z,$ and $\lim\limits_{m\to+\infty}t_m=\lim\limits_{m\to-\infty}-t_m=+\infty,$ then $z\notin \bigcup_{n\in\mathbb{Z}} \sigma^n\left(\pi^{-1}(\bigcup_{i=1}^{k} \partial R_{i})\right).$
	\end{claim}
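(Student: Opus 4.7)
The plan is to argue by contradiction, translating the continuous-time orbit hypothesis on $(\Delta_\rho, f_t)$ into a purely symbolic statement on $(\Delta, \sigma)$ via the semi-conjugacy $\pi\circ \sigma = T\circ \pi$ (hence $\pi\circ \sigma^n = T^n\circ \pi$ for every $n\in\Z$), and then exploiting the one-sided invariance of $\bigcup_i \partial^s R_i$ and $\bigcup_i \partial^u R_i$ recorded in (\ref{equation-AB}).

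First I would observe that the orbit of $(z,0)$ under the suspension flow meets $\Delta\times\{0\}$ exactly at the successive return times, and the $m$-th return hits $(\sigma^m(z),0)$. The assumption that $\{t_m\}_{m\in\Z}$ is increasing with $t_m\to\pm\infty$ and $f_{t_m}(z,0)\in\Delta\times\{0\}$ therefore pins down a bi-infinite sequence of integers $n_m\in\Z$ with $n_m\to\pm\infty$ so that $f_{t_m}(z,0) = (\sigma^{n_m}(z),0)$. Combined with the identity $d_{\Delta_\rho}((w,0),(\tilde x,0)) = d(w,\tilde x)$ noted in the paper, the spatial hypothesis becomes $\sigma^{n_m}(z)\in B(\tilde x,\tilde\eps)$ for every $m\in\Z$. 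Before proceeding I would sharpen the choice of $\tilde x$: since $\pi^{-1}(\bigcup_i \partial^u R_i)$ is, like $\pi^{-1}(\bigcup_i \partial^s R_i)$, a proper closed subset of $\Delta$, the point $\tilde x$ and radius $\tilde\eps$ can be chosen so that $B(\tilde x,\tilde\eps)$ is disjoint from all of $\pi^{-1}(\bigcup_i \partial R_i)$, not merely from its stable part.

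Now assume for contradiction that $z = \sigma^n(w)$ with $w\in \pi^{-1}(\bigcup_i \partial R_i)$ for some $n\in\Z$, and set $y = \pi(z)$, so that $T^{-n}(y) = \pi(w)\in \bigcup_i \partial R_i$. If $T^{-n}(y)\in \bigcup_i \partial^s R_i$, forward invariance $T(\bigcup_i \partial^s R_i)\subset \bigcup_i \partial^s R_i$ from (\ref{equation-AB}) gives $T^\ell(y)\in \bigcup_i \partial^s R_i$ for every $\ell\geq -n$; choosing $m$ with $n_m\geq -n$ (possible because $n_m\to+\infty$) then yields $\sigma^{n_m}(z)\in \pi^{-1}(\bigcup_i \partial^s R_i)\cap B(\tilde x,\tilde\eps)$, which is empty. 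The case $T^{-n}(y)\in \bigcup_i \partial^u R_i$ is perfectly symmetric, using the backward invariance $T^{-1}(\bigcup_i \partial^u R_i)\subset \bigcup_i \partial^u R_i$ and a sufficiently negative $n_m$. The only non-routine issue, which I would flag as the main obstacle, is the need to strengthen the preliminary choice of $\tilde x$ so that its neighborhood avoids the unstable boundary as well; once this is arranged, the two cases split off cleanly from (\ref{equation-AB}) and the symbolic translation of the return times.
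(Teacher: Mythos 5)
Your argument follows the paper's proof exactly: convert the return times to a bi-infinite sequence of $\sigma$-iterates $n_m\to\pm\infty$, apply the one-sided invariance of the boundary pieces recorded in \eqref{equation-AB}, and contradict the disjointness of $B(\tilde x,\tilde\eps)$ from the boundary preimage. You also correctly flag a genuine oversight in the paper's setup: there $B(\tilde x,\tilde\eps)$ is only arranged to be disjoint from $\pi^{-1}(\bigcup_i\partial^s R_i)$, so the paper's ``without loss of generality'' reduction to the stable case is not actually justified --- the unstable case (using backward invariance of $\bigcup_i\partial^u R_i$ and $n_m\to-\infty$) requires the ball to avoid $\pi^{-1}(\bigcup_i\partial^u R_i)$ as well, and your strengthened choice of $\tilde x$ with $B(\tilde x,\tilde\eps)$ disjoint from all of $\pi^{-1}(\bigcup_i\partial R_i)$, which is still a proper closed subset of $\Delta$ since $\pi$ is surjective, is the correct patch.
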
 
    \begin{proof}
    	Without loss of generality, we assume  $\sigma^l(z)\in \pi^{-1}(\bigcup_{i=1}^{k} \partial ^s R_{i})$ for some  integer $l\in\mathbb{Z}.$ Then by (\ref{equation-AB}) we have $\sigma^{l+n}(z)\in \pi^{-1}(\bigcup_{i=1}^{k} \partial ^s R_{i})$ for any $n\geq 0.$ Note that $f_{t_m}((z,0))\in \Delta\times \{0\}$ implies there is  an integer $n_m$ such that $f_{t_m}((z,0))=(\sigma^{n_m}(z),0).$ So by $\lim\limits_{m\to+\infty}t_m=+\infty,$ there is an integer $n_m>l$ such that $d(\sigma^{n_m}(z),\tilde{x})=d_{\Delta_{\rho}}(f_{t_m}((z,0)),(\tilde{x},0))<\tilde{\eps}.$ This contradicts $B(\tilde{x},\tilde{\eps})\cap\pi^{-1}( \bigcup_{i=1}^{k}\partial^s R_i)=\emptyset.$
    \end{proof}

Fix $F=\cov\{\mu_i\}_{i=1}^m\subseteq \mathcal{M}(\Phi,\Lambda),$ and  $\eta,  \zeta>0.$ Then there is $\tilde{\mu}_i\in\mathcal{M}(\F,\Delta_{\rho})$ such that $\mu_i=\tilde{\mu}_i\circ \pi^{-1}.$
Since $\pi$ is continuous, there is $\tilde{\zeta}>0$ such that $d^{*}(\tilde{\mu}\circ\pi^{-1},\tilde{\nu}\circ\pi^{-1})<\zeta$ for any $\tilde{\mu},\tilde{\nu}\in \mathcal{M}(\F,\Delta_{\rho})$ with $d^*(\tilde{\mu},\tilde{\nu})<\tilde{\zeta}.$
For $\tilde{F}=\cov\{\tilde{\mu}_i\}_{i=1}^m\subseteq \mathcal{M}(\F,\Delta_{\rho}),$ $\tilde{x}$ and $\eta,\tilde{\zeta},\tilde{\eps}>0,$ by Proposition \ref{Mainlemma-convex-by-horseshoe4} there exist compact $\F$-invariant subsets $\Delta^i_\rho\subseteq\Xi_\rho\subsetneq \Delta_\rho$ such that for each $1\leq i\leq m$
\begin{enumerate}
	\item[(a)] $f_{t}|_{\Delta^i_\rho}: \Delta^i_\rho \rightarrow \Delta^i_\rho$  and $f_{t}|_{\Xi_\rho}: \Xi_\rho\rightarrow \Xi_\rho$ are suspension flows over transitive subshifts of finite type $(\Delta_i,\sigma|_{\Delta_i})$ and $(\Xi,\sigma|_{\Xi})$  with the roof function $\rho.$ 
	\item[(b)] $\htop(\Delta^i_\rho)>h_{\tilde{\mu}_i}(\F)-\eta$ and consequently,   $\htop(\Xi_\rho)>\sup\{h_{\kappa}(\F):\kappa\in F\}-\eta.$
	\item[(c)] $d_H(\tilde{F},  \mathcal{M}(\F,  \Xi_\rho))<\tilde{\zeta}$,   $d_H(\tilde{\mu}_i,  \mathcal{M}(\F,  \Delta^i_\rho))<\tilde{\zeta}$.
	\item[(d)] For any $z\in\Xi$ there is an increasing sequence $\{t_m\}_{m=-\infty}^{+\infty}$ such that $f_{t_m}((z,0))\in \Delta\times \{0\},$ $d_{\Delta_{\rho}}(f_{t_m}((z,0)),(\tilde{x},0))<\tilde{\eps}$ for any $m\in\Z,$ and $\lim\limits_{m\to+\infty}t_m=\lim\limits_{m\to-\infty}-t_m=+\infty$.
\end{enumerate}
By Claim \ref{claim-1} and item(d),  we have $$\Delta_i\cap \bigcup_{n\in\mathbb{Z}} \sigma^n\left(\pi^{-1}(\bigcup_{i=1}^{k} \partial R_{i})\right)=\emptyset\text{ and }\Xi\cap \bigcup_{n\in\mathbb{Z}} \sigma^n\left(\pi^{-1}(\bigcup_{i=1}^{k} \partial R_{i})\right)=\emptyset.$$
Then 
$$\Delta^i_\rho\cap \bigcup_{t\in\mathbb{R}} f_t\left(\pi^{-1}(\bigcup_{i=1}^{k} \partial R_{i})\right)=\emptyset\text{ and }\Xi_\rho\cap \bigcup_{t\in\mathbb{R}} f_t\left(\pi^{-1}(\bigcup_{i=1}^{k} \partial R_{i})\right)=\emptyset.$$
This implies $\pi$ is one-to-one on $\Delta^i_\rho$ and $\Xi_\rho.$ So $\Lambda_i=\pi(\Delta^i_\rho)$ and $\Theta=\pi(\Xi_\rho)$ are the horseshoes we want. \qed

\section{Asymptotically  additive families and almost additive families}\label{Almost Additive}
In this section we consider Theorem \ref{Thm:basic-set1} and \ref{Thm:basic-set2} in the more general context of asymptotically  additive families and almost additive families of continuous functions and give abstract conditions on which the results of Theorem \ref{Thm:basic-set1} and \ref{Thm:basic-set2} hold.
\subsection{Nonadditive thermodynamic formalism for flows}\label{subsection-equilbrium}
In this subsection we  recall  a few basic notions and results on the nonadditive thermodynamic formalism for flows.
Consider a  continuous flow $\Phi=(\phi_t)_{t\in\mathbb{R}}$ on a compact metric space $(M,d).$ Let $\Lambda\subset M$ be a compact $\phi_t$-invariant set.   
Given $x\in \Lambda$ and $t,\varepsilon>0$, we consider the set
$$
B_{t}(x, \varepsilon)=\left\{y \in \Lambda: d\left(\phi_{s}(y), \phi_{s}(x)\right)<\varepsilon \text { for any } s \in[0, t]\right\}.
$$
Moreover, let $a=\left(a_t\right)_{t\geq 0}$ be a family of continuous functions $a_t:\Lambda\to \mathbb{R}$ with {\it tempered variation}, that is, such that
$$
\lim _{\varepsilon \rightarrow 0} \limsup_{t \rightarrow \infty} \frac{\gamma_{t}(a, \varepsilon)}{t}=0,
$$
where
$$
\gamma_{t}(a, \varepsilon)=\sup \left\{\left|a_{t}(y)-a_{t}(x)\right|: y \in B_{t}(x, \varepsilon),\ x\in \Lambda \right\}.
$$
Given $\varepsilon>0$, we say that a set $\Gamma \subset \Lambda \times \mathbb{R}_{0}^{+}$covers $\Lambda$ if
$$
\bigcup_{(x, t) \in \Gamma} B_{t}(x, \varepsilon) \supset \Lambda,
$$
and we write
$$
a(x, t, \varepsilon)=\sup \left\{a_{t}(y): y \in B_{t}(x, \varepsilon)\right\} \quad \text { for }(x, t) \in \Gamma.
$$
For each  $\alpha \in \mathbb{R}$, let
\begin{equation}\label{equation-AC}
	M(\Lambda, a, \alpha, \varepsilon)=\lim _{T \rightarrow+\infty} \inf _{\Gamma} \sum_{(x, t) \in \Gamma} \exp (a(x, t, \varepsilon)-\alpha t)
\end{equation}
with the infimum taken over all countable sets $\Gamma \subset \Lambda \times[T,+\infty)$ covering $\Lambda$. When $\alpha$ goes from $-\infty$ to $+\infty$, the quantity in (\ref{equation-AC}) jumps from $+\infty$ to 0 at a unique value and so one can define
$$
P\left(a, \Lambda, \varepsilon\right)=\inf \{\alpha \in \mathbb{R}: M(\Lambda, a, \alpha, \varepsilon)=0\}.
$$
Moreover, the limit
$$
P\left(a, \Lambda\right)=\lim _{\varepsilon \rightarrow 0} P\left(a, \Lambda, \varepsilon\right).
$$
exists and is called the nonadditive topological pressure of the family $a$ on the set $\Lambda$. 

We recall that a family of functions $a=\left(a_t\right)_{t\geq 0}$ on $\Lambda$ is said to be {\it almost additive} with respect to the flow $(\phi_t)_{t\in\mathbb{R}}$ if there is a constant $C>0$ such that $$
-C+a_{t}+a_{s} \circ \phi_{t} \leqslant a_{t+s} \leqslant a_{t}+a_{s} \circ \phi_{t}+C
$$
for every $t, s\geq 0$. 
Let $A(\Phi,\Lambda)$ be the set of all almost additive families of continuous functions $a=\left(a_{t}\right)_{t \geqslant 0}$ on $\Lambda$ with tempered variation such that
$$
\sup _{t \in[0, s]}\left\|a_{t}\right\|_{\infty}<\infty \quad \text { for some } s>0.
$$
From \cite[Proposition 4]{BH21} the limit $\lim \limits_{t \rightarrow \infty} \frac{1}{t} \int_{\Lambda} a_{t} \mathrm{~d} \mu$ exists for any $a=\left(a_{t}\right)_{t \geq 0}\in A(\Phi,\Lambda)$ and any $\mu\in \mathcal{M}(\Phi,\Lambda),$ and 
the function,
\begin{equation}\label{equation-P}
	\mathcal{M}(\Phi,\Lambda) \ni \mu \mapsto \lim _{t \rightarrow \infty} \frac{1}{t} \int_{\Lambda} a_{t} \mathrm{~d} \mu,
\end{equation}
is continuous with the weak* topology.
Let $a=\left(a_{t}\right)_{t \geq 0}\in A(\Phi,\Lambda).$
Then by \cite[Theorem 2.1 and Lemma 2.4]{BH212}, we have 
\begin{equation}\label{equation-BB}
	\begin{split}
		P(a,\Lambda)&=\sup_{\mu\in\mathcal{M}(\Phi,\Lambda)}\left(h_{\mu}(\Phi)+\lim _{t \rightarrow \infty} \frac{1}{t} \int_{M} a_{t} d \mu\right)\\
		&\sup_{\mu\in\mathcal{M}_{erg}(\Phi,\Lambda)}\left(h_{\mu}(\Phi)+\lim _{t \rightarrow \infty} \frac{1}{t} \int_{M} a_{t} d \mu\right).
	\end{split}
\end{equation}
A measure $\mu\in \mathcal{M}(\Phi,\Lambda)$ is said to be an {\it equilibrium measure} for the almost additive family $a$ if $$P(a,\Lambda)=h_\mu(\Phi)+\lim\limits_{t \rightarrow \infty} \frac{1}{t} \int_{M} a_{t} d \mu.$$ We say that $a$ has {\it bounded variation} if  for every $\kappa>0$ there exists $\varepsilon>0$ such that
$$
\left|a_{t}(x)-a_{t}(y)\right|<\kappa \quad \text { whenever } y \in B_{t}(x, \varepsilon).
$$
Following the  arguments \cite[Section 3.2]{BH212} we have the following result on uniqueness of equilibrium measure for suspension flows.
\begin{lemma}\label{lemma-G}
	Suppose  $f_{t}: \Delta_{\rho} \rightarrow \Delta_{\rho}$ is a suspension flow over a transitive subshift of finite type $(\Delta,\sigma)$ with a Lipschitz roof function $\rho.$  Let $a=\left(a_{t}\right)_{t \geq 0}$ be an almost additive family of continuous functions on $\Delta_\rho$ with bounded variation such that $P(a,\Delta_\rho)=0$ and $\sup _{t \in[0, s]}\left\|a_{t}\right\|_{\infty}<\infty$ for some $s>0$. Then there is a unique equilibrium measure $\mu_{a}$ for $a$.
\end{lemma}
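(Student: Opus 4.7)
The plan is to transfer the problem from the suspension flow $\F=(f_t)$ on $\Delta_\rho$ to its base transitive subshift of finite type $(\Delta,\sigma)$, where uniqueness of equilibrium measures for almost additive continuous families with bounded variation is known from the standard nonadditive thermodynamic formalism for shifts. The bridge is the Abramov bijection $\mathcal{R}\colon\mathcal{M}(\sigma,\Delta)\to\mathcal{M}(\F,\Delta_\rho)$ recalled in Section~\ref{subsec:suspension}, together with Abramov's entropy formula.

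First I would build the induced family $\tilde{a}=(\tilde{a}_n)_{n\geq 0}$ on $\Delta$ by
$$\tilde{a}_n(x)=a_{\rho_n(x)}(x,0),\qquad \rho_n(x)=\sum_{k=0}^{n-1}\rho(\sigma^k x),$$
identifying $(x,0)\in\Delta\times\{0\}$ with its image in $\Delta_\rho$. Since $f_{\rho_n(x)}(x,0)=(\sigma^n x,0)$, almost additivity of $a$ under $\F$ with constant $C$ transfers directly to almost additivity of $\tilde{a}$ under $\sigma$ with the same constant, and the hypothesis $\sup_{t\in[0,s]}\|a_t\|_\infty<\infty$ combined with almost additivity yields continuity of each $\tilde{a}_n$. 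Using the variational principle \eqref{equation-BB} for $\F$, Abramov's formula $h_{\mathcal{R}(\nu)}(\F)=h_\nu(\sigma)/\int\rho\,d\nu$, and the parallel identity $\lim_{t\to\infty}\tfrac{1}{t}\int a_t\,d\mathcal{R}(\nu)=\lim_{n\to\infty}\tfrac{1}{n}\int\tilde{a}_n\,d\nu\big/\int\rho\,d\nu$, one shows that $P(a,\Delta_\rho)=0$ is equivalent to $P(\tilde{a},\Delta)=0$, and that $\mu\in\mathcal{M}(\F,\Delta_\rho)$ is an equilibrium measure for $a$ if and only if $\mathcal{R}^{-1}(\mu)$ is an equilibrium measure for $\tilde{a}$ on $\Delta$.

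The main obstacle, and the only place where Lipschitz (not merely continuous) regularity of $\rho$ is needed, is verifying that $\tilde{a}$ has bounded variation on $(\Delta,\sigma)$. On the two-sided subshift, membership $y\in B_n^\sigma(x,\varepsilon)$ forces $d(\sigma^j x,\sigma^j y)$ to decay geometrically in $\min(j,n-1-j)$, so Lipschitz $\rho$ yields a time mismatch $|\rho_n(y)-\rho_n(x)|\leq L\sum_{j=0}^{n-1}d(\sigma^j x,\sigma^j y)\leq K\varepsilon$ with $K$ independent of $n$. For $\varepsilon$ small enough that $K\varepsilon\leq s$, the unmatched $\F$-segment has length at most $s$ and is absorbed by $\sup_{t\in[0,s]}\|a_t\|_\infty$ via almost additivity, while on the matched segment the $\F$-orbits of $(x,0)$ and $(y,0)$ stay within an $\F$-Bowen ball of radius $O(\varepsilon)$ in the Bowen--Walters metric, so bounded variation of $a$ on $\Delta_\rho$ applies. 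Combining the two contributions gives $|\tilde{a}_n(y)-\tilde{a}_n(x)|<\kappa$ uniformly in $n$, as required.

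With $\tilde{a}$ an almost additive family of continuous functions on the transitive subshift of finite type $(\Delta,\sigma)$ with bounded variation and $P(\tilde{a},\Delta)=0$, the standard nonadditive thermodynamic formalism on subshifts of finite type yields a unique equilibrium measure $\nu_{\tilde a}\in\mathcal{M}(\sigma,\Delta)$. Setting $\mu_a:=\mathcal{R}(\nu_{\tilde a})$ and invoking the equivalence of equilibrium measures established above produces the unique equilibrium measure for $a$ on $\Delta_\rho$.
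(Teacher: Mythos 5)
Your proposal is correct and follows essentially the same route as the paper: reduce to the base shift via the induced family $c_n(x)=a_{\rho_n(x)}(x,0)$, use Abramov's correspondence $\mathcal{R}$ to match pressures and equilibrium measures, and then invoke the discrete-time uniqueness result for almost additive sequences of continuous functions with bounded variation on a transitive subshift of finite type. The only difference in exposition is that you work out the bounded-variation verification for the induced family explicitly (this is where the Lipschitz hypothesis on $\rho$ is used), whereas the paper outsources that step to \cite[Section 3.2]{BH212}, and the paper pins down the uniqueness statement to \cite[Theorem 5.3]{DHT} while you appeal to the standard formalism on subshifts of finite type.
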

\begin{proof}
	Define a sequence of functions $c_n:\Delta\to\mathbb{R}$ by 
	$$c_n(x)=a_{\rho_n(x)}(x)$$
	where $\rho_n(x)=\sum_{i=0}^{n-1}\rho(\sigma^i(x)).$
	Following the  arguments \cite[Section 3.2]{BH212} we have
	\begin{description}
		\item[(1)] $c=(c_n)_{n\in\mathbb{N}}$ is an almost additive sequence of continuous functions, that is, there is a constant $\tilde{C}>0$ such that for every $n, m \in \mathbb{N}$ we have
		$$
		-\tilde{C}+c_{n}+c_{m} \circ \sigma^{n} \leqslant c_{n+m} \leqslant \tilde{C}+c_{n}+c_{m} \circ \sigma^{n}.
		$$
		\item[(2)] $c=(c_n)_{n\in\mathbb{N}}$ has bounded variation, that is, there exists $\varepsilon>0$ for which
		$
		\sup _{n \in \mathbb{N}} \gamma_{n}(c, \varepsilon)<\infty
		$
		with $$\gamma_{n}(c, \varepsilon)=\sup \left\{\left|c_{n}(x)-c_{n}(y)\right|: x,y\in\Delta,\ d\left(\sigma^{k} (x), \sigma^{k} (y)\right)<\varepsilon \text { for } k=0, \ldots, n\right\}.$$ 
		\item[(3)] For any ergodic measure $\nu\in\mathcal{M}(\sigma,\Delta)$ and $\mu=\mathcal{R}(\nu)$ (recall Subsection~\ref{subsec:suspension})
		$$
		h_{\mu}(\F)+\lim _{t \rightarrow \infty} \frac{1}{t} \int_{\Delta_\rho} a_{t} d \mu=\left(h_{\nu}\left(\sigma\right)+\lim _{n \rightarrow \infty} \frac{1}{n} \int_{\Delta} c_{n} d \nu\right) / \int_{\Delta} \rho d \nu
		$$
	\end{description}
	By item(3) we have $h_{\mu}(\F)+\lim \limits_{t \rightarrow \infty} \frac{1}{t} \int_{\Delta_\rho} a_{t} d \mu=0$ if and only if $h_{\nu}\left(\sigma\right)+\lim\limits_{n \rightarrow \infty} \frac{1}{n} \int_{\Delta} c_{n} d \nu=0$
	Since $P(a,\Delta_\rho)=0$, then  $\mu$ is an equilibrium measure for $a$ if and only if $\nu$ is an equilibrium measure for $c$ by (\ref{equation-BB}).
	
	It's proved in \cite[Theorem 5.3]{DHT} that for every homeomorphism which is transitive, expansive and has the shadowing property, and for every almost additive sequence of continuous functions with bounded variation, there is a unique equilibrium measure.
	Recall from \cite{Walters2} a subshift satisfies the shadowing property if and only if it is a subshift of finite type. As a subsystem of two-side full shift, it is expansive. 
	Then there is a unique equilibrium measure $\nu_{c}$ for $c$, and $\mathcal{R}(\mu_c)$ is the unique equilibrium measure for $a$.
\end{proof}
\begin{corollary}\label{corollary-C}
	Suppose  $f_{t}: \Delta_{\rho} \rightarrow \Delta_{\rho}$ is a suspension flow over a transitive subshift of finite type $(\Delta,\sigma)$ with a Lipschitz roof function $\rho.$  Let $a=\left(a_{t}\right)_{t \geq 0}$ be an almost additive family of continuous functions on $\Delta_\rho$ with bounded variation such that $\sup _{t \in[0, s]}\left\|a_{t}\right\|_{\infty}<\infty$ for some $s>0$. Then there is a unique equilibrium measure $\mu_{a}$ for $a$.
\end{corollary}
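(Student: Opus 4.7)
The plan is to reduce to Lemma \ref{lemma-G} by a pressure normalization. Let $P=P(a,\Delta_\rho)$ and define the shifted family $\tilde{a}=(\tilde{a}_t)_{t\ge 0}$ by $\tilde{a}_t=a_t-Pt$. I would first verify that $\tilde{a}$ inherits from $a$ all the hypotheses of Lemma \ref{lemma-G}. Indeed, the almost additivity constant $C$ is preserved because the added term $-Pt$ is additive: from $-C+a_t+a_s\circ f_t\le a_{t+s}\le a_t+a_s\circ f_t+C$ one obtains the same two-sided inequality with $\tilde{a}$ after subtracting $P(t+s)=Pt+Ps$. Bounded variation is trivially preserved since $\tilde{a}_t(x)-\tilde{a}_t(y)=a_t(x)-a_t(y)$, and similarly the boundedness $\sup_{t\in[0,s]}\|\tilde{a}_t\|_\infty<\infty$ follows from $\|\tilde{a}_t\|_\infty\le\|a_t\|_\infty+|P|s$ for $t\in[0,s]$.

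Next I would show $P(\tilde{a},\Delta_\rho)=0$. This is a direct computation from the definition of the nonadditive topological pressure: the quantity $\tilde{a}(x,t,\varepsilon)-\alpha t$ equals $a(x,t,\varepsilon)-(\alpha+P)t$, so $M(\Delta_\rho,\tilde{a},\alpha,\varepsilon)=M(\Delta_\rho,a,\alpha+P,\varepsilon)$, and hence $P(\tilde{a},\Delta_\rho,\varepsilon)=P(a,\Delta_\rho,\varepsilon)-P$, which yields $P(\tilde{a},\Delta_\rho)=0$ after letting $\varepsilon\to 0$.

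Finally, I would observe that the set of equilibrium measures is unchanged: for every $\mu\in\mathcal{M}(\mathcal{F},\Delta_\rho)$,
\[
h_\mu(\mathcal{F})+\lim_{t\to\infty}\frac{1}{t}\int_{\Delta_\rho}\tilde{a}_t\,d\mu \;=\; h_\mu(\mathcal{F})+\lim_{t\to\infty}\frac{1}{t}\int_{\Delta_\rho}a_t\,d\mu \;-\;P,
\]
so $\mu$ is an equilibrium measure for $a$ if and only if it is an equilibrium measure for $\tilde{a}$. Applying Lemma \ref{lemma-G} to $\tilde{a}$ now produces a unique equilibrium measure, which is then the unique equilibrium measure for $a$. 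No step here is a real obstacle; the only thing to be careful about is that every hypothesis of Lemma \ref{lemma-G} is genuinely preserved under the shift by $Pt$, which is the verification above.
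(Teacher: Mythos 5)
Your proof is correct and takes essentially the same route as the paper: subtract $P(a,\Delta_\rho)\cdot t$ to normalize the pressure to zero, check that almost additivity, bounded variation and local boundedness survive the shift, observe that equilibrium measures for $a$ and the shifted family coincide, and invoke Lemma \ref{lemma-G}. The only difference is that you spell out the verifications (including the direct computation $P(\tilde a,\Delta_\rho,\varepsilon)=P(a,\Delta_\rho,\varepsilon)-P$) that the paper states without proof.
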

\begin{proof}
	For any $t \geq 0,$ define
	$$
	b_{t}=a_{t}-P(a) t
	$$
	Then $b$ is  an almost additive family with bounded variation and satisfies $\sup _{t \in[0, s]}\left\|b_{t}\right\|_{\infty}<\infty$ and $P(b)=0$. For each $\F$-invariant probability measure $\mu$ on $\Delta_{\rho}$ we have
	$$
	\frac{1}{t} \int_{X} a_{t} d \mu=\frac{1}{t} \int_{X} b_{t} d \mu-P(a) .
	$$
	This implies that $a$ and $b$ have the same equilibrium measures.  Then by Lemma \ref{lemma-G} we complete the proof.
\end{proof}

\begin{lemma}
	Given $X\in\mathscr{X}^1(M)$ and an invariant compact  set $\Lambda$. Assume that there exists a suspension flow $f_{t}: \Delta_{\rho} \rightarrow \Delta_{\rho}$ over a transitive subshift of finite type $(\Delta,\sigma)$ with a Lipschitz roof function $\rho$, and a finite-to-one continuous surjection $\pi: \Delta_{\rho} \rightarrow \Lambda$, such that $\pi\circ f_t=\phi_t\circ \pi$. Then for every almost additive family of continuous functions $a=\left(a_{t}\right)_{t \geq 0}$  on $\Lambda$ with bounded variation such that $\sup _{t \in[0, s]}\left\|a_{t}\right\|_{\infty}<\infty$ for some $s>0,$  there is a unique equilibrium measure $\mu_{a}$ for $a$.
\end{lemma}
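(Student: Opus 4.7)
The plan is to lift the problem to the suspension flow $\F=(f_t)_{t}$ on $\Delta_\rho$ via the semi-conjugacy $\pi$ and invoke Corollary~\ref{corollary-C}. Define the pulled-back family $\tilde{a}=(\tilde{a}_t)_{t\geq 0}$ on $\Delta_\rho$ by $\tilde{a}_t:=a_t\circ\pi$. First I would verify that $\tilde{a}$ satisfies the three hypotheses of Corollary~\ref{corollary-C}. Using $\pi\circ f_t=\phi_t\circ\pi$ we have $a_s\circ\phi_t\circ\pi=\tilde{a}_s\circ f_t$, so composing the almost additivity inequalities for $a$ with $\pi$ shows that $\tilde{a}$ is almost additive with the same constant $C$. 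The bound $\sup_{t\in[0,s]}\|\tilde{a}_t\|_\infty\leq \sup_{t\in[0,s]}\|a_t\|_\infty<\infty$ is immediate. For bounded variation, given $\kappa>0$ pick $\varepsilon>0$ witnessing bounded variation of $a$; by uniform continuity of $\pi$ choose $\delta>0$ with $d(\pi(u),\pi(v))<\varepsilon$ whenever $d_{\Delta_\rho}(u,v)<\delta$, so any $y\in B^{\Delta_\rho}_t(x,\delta)$ has $\pi(y)\in B^{\Lambda}_t(\pi(x),\varepsilon)$ and hence $|\tilde{a}_t(x)-\tilde{a}_t(y)|<\kappa$. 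Corollary~\ref{corollary-C} then yields a unique equilibrium measure $\tilde{\mu}$ for $\tilde{a}$ on $\Delta_\rho$, and the natural candidate is $\mu_a:=\pi_*\tilde{\mu}$.

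Next I would match equilibrium measures across $\pi$ using two ingredients. The change-of-variables identity $\int \tilde{a}_t\,d\nu=\int a_t\,d(\pi_*\nu)$ for every $\nu\in \mathcal{M}(\F,\Delta_\rho)$ shows that the asymptotic averages appearing in the variational principle~(\ref{equation-BB}) are intertwined by $\pi_*$. Because $\pi$ is finite-to-one, applying the Ledrappier--Walters theorem to the time-one maps $(f_1,\phi_1)$ (finite fibers have zero conditional entropy) gives the entropy identity $h_\nu(\F)=h_{\pi_*\nu}(\Phi)$ for every such $\nu$. Since every $\mu\in\mathcal{M}(\Phi,\Lambda)$ admits at least one $\F$-invariant lift (take any Borel preimage measure of $\mu$ and pass to a weak-$*$ subsequential limit of its Ces\`aro averages under $f_t$, which remains a lift of $\mu$ by $\Phi$-invariance), combining these two identities yields $P(\tilde{a},\Delta_\rho)=P(a,\Lambda)$ and a surjection from equilibrium measures for $\tilde{a}$ onto equilibrium measures for $a$ implemented by $\pi_*$.

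Uniqueness then follows directly: if $\mu$ is an equilibrium measure for $a$ and $\nu$ is any $\F$-invariant lift of $\mu$, then the two identities force
$$h_\nu(\F)+\lim_{t\to\infty}\frac{1}{t}\int_{\Delta_\rho}\tilde{a}_t\,d\nu=h_\mu(\Phi)+\lim_{t\to\infty}\frac{1}{t}\int_{\Lambda}a_t\,d\mu=P(a,\Lambda)=P(\tilde{a},\Delta_\rho),$$
so $\nu$ is an equilibrium measure for $\tilde{a}$ and must equal $\tilde{\mu}$ by Corollary~\ref{corollary-C}; consequently $\mu=\pi_*\tilde{\mu}$. The only step requiring real care is the entropy-preservation identity, which is precisely where the finite-to-one hypothesis on $\pi$ is used; the remaining work is a routine transfer of the almost additive and bounded variation structure through the semi-conjugacy.
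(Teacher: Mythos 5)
Your proof is correct and follows essentially the same route as the paper's: pull back $a$ to $\tilde a=a\circ\pi$ on $\Delta_\rho$, invoke Corollary~\ref{corollary-C} to get a unique equilibrium measure $\mu_{\tilde a}$ there, transfer it to $\Lambda$ using the entropy identity $h_\nu(\F)=h_{\pi_*\nu}(\Phi)$ afforded by the finite-to-one hypothesis, and prove uniqueness by lifting any equilibrium measure for $a$ back to $\Delta_\rho$. You spell out a few steps the paper leaves implicit (the verification of bounded variation for $\tilde a$, the existence of an invariant lift via Ces\`aro averaging, the Ledrappier--Walters justification for entropy preservation), but the structure and the key ideas are identical.
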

\begin{proof}
	Define $\tilde{a}_t=a_t\circ \pi$ for any $t\geq 0.$ Since $\pi$ is continuous, $\tilde{a}=\left(\tilde{a}_{t}\right)_{t \geq 0}$ is an almost additive family of continuous functions on $\Delta_\rho$ with bounded variation and $\sup _{t \in[0, s]}\left\|\tilde{a}_{t}\right\|_{\infty}<\infty.$ By Corollary \ref{corollary-C}, there is a unique equilibrium measure $\mu_{\tilde{a}}$ for $\tilde{a}$. Since $\pi$ is finite-to one, we have $h_{\mu_{\tilde{a}}\circ\pi^{-1}}(\Phi)=h_{\mu_{\tilde{a}}}(\F)$ and thus
	$$h_{\mu_{\tilde{a}}\circ\pi^{-1}}(\Phi)+\lim\limits_{t\to\infty}\frac{1}{t}\int a_t d\mu_{\tilde{a}}\circ\pi^{-1}=h_{\mu_{\tilde{a}}}(\F)+\lim\limits_{t\to\infty}\frac{1}{t}\int \tilde{a}_t d\mu_{\tilde{a}}=P(\tilde{a})\geq P(a).$$
	Then by (\ref{equation-BB}), $\mu_{\tilde{a}}\circ\pi^{-1}$ is an equilibrium measure for $a.$ To show that the measure is unique, suppose $\nu$ is an equilibrium measure for $a,$ and choose $\mu\in\mathcal{M}(\F,\Delta_{\rho})$ with $\mu\circ\pi^{-1}=\nu.$ Then $$h_{\mu}(\F)+\lim\limits_{t\to\infty}\frac{1}{t}\int \tilde{a}_t d\mu=h_{\nu}(\Phi)+\lim\limits_{t\to\infty}\frac{1}{t}\int a_t d\nu= P(a)=P(\tilde{a}).$$
	Thus $\mu=\mu_{\tilde{a}}$ by the uniqueness of $\mu_{\tilde{a}}.$ So  $\nu=\mu\circ\pi^{-1}=\mu_{\tilde{a}}\circ\pi^{-1}$ is the unique equilibrium measure for $a$.
\end{proof}

Then we have the following combining with Subsection \ref{section-basic set} and Definition \ref{definition-hyperbolic}. 
\begin{theorem}\label{theorem-AA}
	Given $X\in\mathscr{X}^1(M)$ and an invariant compact  set $\Lambda$. If $\Lambda$ is a basic set or a horseshoe of $\phi_{t},$ then for every almost additive family of continuous functions $a=\left(a_{t}\right)_{t \geq 0}$  on $\Lambda$ with bounded variation such that $\sup _{t \in[0, s]}\left\|a_{t}\right\|_{\infty}<\infty$ for some $s>0,$  there is a unique equilibrium measure $\mu_{a}$ for $a$.
\end{theorem}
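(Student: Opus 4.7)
The plan is to reduce the statement to the preceding lemma, which already establishes existence and uniqueness of the equilibrium measure under the hypothesis that $\Lambda$ admits a finite-to-one continuous semi-conjugacy from a suspension flow over a transitive subshift of finite type with a Lipschitz roof function. Thus the whole content of the theorem is to verify that both horseshoes and basic sets fall within this framework.

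First I would handle the horseshoe case, which is essentially tautological: by Definition \ref{definition-hyperbolic}, a horseshoe $\Lambda$ comes equipped with a suspension flow $f_t\colon \Delta_\rho\to\Delta_\rho$ over a transitive subshift of finite type $(\Delta,\sigma)$ with Lipschitz roof function $\rho$, together with a homeomorphism $\pi\colon\Delta_\rho\to\Lambda$ satisfying $\pi\circ f_t=\phi_t\circ\pi$. In particular $\pi$ is a finite-to-one (indeed, one-to-one) continuous surjection, so the hypotheses of the preceding lemma are met and uniqueness of the equilibrium measure follows immediately.

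Next I would handle the basic set case by invoking the Bowen--Ratner construction recalled in Section \ref{section-basic set}. Given a basic set $\Lambda$ of $X$, Bowen's Markov partition $R_1,\dots,R_k$ of $\Lambda$ (for some small $\alpha>0$) together with the transfer map $T$ and transfer time $\tau$ yield a transitive subshift of finite type $(\Delta,\sigma)$, a Lipschitz roof function $\rho=\tau\circ\pi$, and a continuous finite-to-one surjection $\pi\colon\Delta_\rho\to\Lambda$ satisfying $\pi\circ f_t=\phi_t\circ\pi$. Hence the hypothesis of the preceding lemma is again verified, and the unique equilibrium measure $\mu_a$ exists.

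There is no real obstacle here since the heavy lifting has already been done: the uniqueness is supplied by the preceding lemma (which in turn rests on Corollary \ref{corollary-C} and the uniqueness result of \cite[Theorem 5.3]{DHT} for transitive, expansive systems with the shadowing property), and the semi-conjugacy is built into the definitions. The only thing one must be mildly careful about is checking that $\rho$ really is Lipschitz in the basic set case, but this is asserted in the classical statement recalled at the start of Section \ref{section-basic set}. Combining these two cases completes the proof.
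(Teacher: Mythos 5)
Your proposal is correct and takes exactly the same route as the paper: the paper simply says the result follows from the preceding lemma by combining with Subsection \ref{section-basic set} (which supplies the finite-to-one continuous surjection $\pi\colon\Delta_\rho\to\Lambda$ with Lipschitz roof $\rho=\tau\circ\pi$ for basic sets) and Definition \ref{definition-hyperbolic} (which supplies the homeomorphism in the horseshoe case). You have merely written out the one-line justification in a bit more detail, including the remark that $\pi$ being a homeomorphism is a fortiori finite-to-one, which is all that is needed.
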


\subsection{Conditional variational principles}
\subsubsection{Conditional variational principle of almost additive families}
Consider a  continuous flow $\Phi=(\phi_t)_{t\in\mathbb{R}}$ on a compact metric space $(M,d).$ Let $\Lambda\subset M$ be a compact $\phi_t$-invariant set.   
Let $d \in \mathbb{N}$ and  $(A, B) \in A(\Phi,\Lambda)^{d} \times A(\Phi,\Lambda)^{d}$ where 
$$
A=\left(a^{1}, \ldots, a^{d}\right) \text { and } B=\left(b^{1}, \ldots, b^{d}\right)
$$
and  $a^{i}=\left(a_{t}^{i}\right)_{t\in\R}$ and $b^{i}=\left(b_{t}^{i}\right)_{t\in\R}$. Assume that
\begin{equation}\label{equation-O}
	\liminf _{t \rightarrow \infty} \frac{b_{t}^{i}(x)}{t}>0 \quad \text { and } \quad b_{t}^{i}(x)>0
\end{equation}
for every $i=1, \ldots, d, x \in \Lambda$, and $t \in \mathbb{R}$. 
Given $\alpha=\left(\alpha_1, \ldots, \alpha_{d}\right) \in \mathbb{R}^{d}$ we define:
$$
R_{A,B}(\alpha)=\bigcap_{i=1}^{d}\left\{x \in \Lambda: \lim _{t \rightarrow \infty} \frac{a_{t}^{i}(x)}{b_{t}^{i}(x)}=\alpha_{i}\right\}.
$$
We also define the map $\mathcal{P}_{A,B}: \mathcal{M}(\Phi,\Lambda) \rightarrow \mathbb{R}$ by:
\begin{equation}\label{equation-A}
	\mathcal{P}_{A,B}(\mu) 
	=\lim _{t \rightarrow \infty}\left(\frac{\int a_{t}^{1} d \mu}{\int b_{t}^{1} d \mu}, \ldots, \frac{\int a_{t}^{d} d \mu}{\int b_{t}^{d} d \mu}\right).
\end{equation}
\eqref{equation-P} ensures  that the function $\mathcal{P}_{A,B}$ is continuous.
Denote
$$L_{A,B}=\{\mathcal{P}_{A,B}(\mu):\mu\in \mathcal{M}(\Phi,\Lambda)\}.$$

Let $E(\Phi,\Lambda) \subset A(\Phi,\Lambda)$ be the set of  families with a unique equilibrium measure. 
Define the sequence of constant functions $u=(u_t)_{t\geq 0}$ with $u_t\equiv t$ for any $t\geq 0.$
In \cite{BH21} L. Barreira and C. Holanda give the conditional variational principle as following.
\begin{theorem}\cite[Theorem 9]{BH21}\label{BarreiraDoutor2009-theorem3}
	Suppose that $\Phi=(\phi_t)_{t\in\mathbb{R}}$  is a continuous flow on a compact metric space $(M,d),$ and $\Lambda\subset M$ is a compact invariant set such  that  the entropy function $\mathcal{M}(\Phi,\Lambda) \ni \mu \mapsto h_\mu(\Phi)$ is upper semi-continuous.   Let $d \in \mathbb{N}$ and $(A, B) \in A(\Phi,\Lambda)^{d} \times A(\Phi,\Lambda)^{d}$ such that $B$ satisfies \eqref{equation-O} and
	$
	\text{span}\left\{a^1, b^1,\cdots,a^d,b^d,u\right\} \subseteq E(\Phi,\Lambda).
	$ 
	If $\alpha \in \mathrm{Int}(L_{A,B})$, then $R_{A,B}(\alpha)\neq \emptyset$, and the following properties hold:
	\begin{description}
		\item[(1)] $\htop(R_{A,B}(\alpha))$ satisfies the variational principle:
		$$
		\htop(R_{A,B}(\alpha))=\max \left\{h_{\mu}(\Phi): \mu \in \mathcal{M}(\Phi,\Lambda) \text { and } \mathcal{P}_{A,B}(\mu)=\alpha\right\}.
		$$
		\item[(2)] There is an ergodic measure $\mu_{\alpha} \in \mathcal{M}(\Phi,\Lambda) $ with $\mathcal{P}_{A,B}\left(\mu_{\alpha}\right)=\alpha, \mu_{\alpha}\left(R_{A,B}(\alpha)\right)=1$, and
		$$
		\htop(R_{A,B}(\alpha))=h_{\mu_{\alpha}}(\Phi).
		$$
	\end{description}
\end{theorem}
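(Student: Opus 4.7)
The plan is a Legendre-transform approach via an auxiliary parametric pressure. Introduce the function $F: \R^d \to \R$ by
$$F(q) = P\Bigl(\sum_{i=1}^d q_i (a^i - \alpha_i b^i),\ \Lambda\Bigr),$$
where the argument is the almost additive family $\bigl(\sum_i q_i(a^i_t - \alpha_i b^i_t)\bigr)_{t \geq 0}$. Since $\mathrm{span}\{a^1,b^1,\ldots,a^d,b^d,u\} \subseteq E(\Phi, \Lambda)$, each such family admits a unique equilibrium measure $\mu_q \in \mathcal{M}(\Phi,\Lambda)$. By \eqref{equation-BB}, $F$ is a pointwise supremum of affine functions of $q$, hence convex; uniqueness of $\mu_q$ together with upper semi-continuity of $h_\mu(\Phi)$ yields (via a standard envelope argument) that $F$ is differentiable, with
$$\partial_{q_i} F(q) = \lim_{t \to \infty} \frac{1}{t} \int (a^i_t - \alpha_i b^i_t) \, d\mu_q.$$

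First I would locate a minimizer $q^* \in \R^d$ of $F$. The hypothesis $\alpha \in \mathrm{Int}(L_{A,B})$ says that along every unit direction $v \in \R^d$ there exist invariant measures $\mu^{\pm}$ with $\langle v,\, \mathcal{P}_{A,B}(\mu^\pm) - \alpha\rangle$ of opposite signs; using \eqref{equation-O} to bound each $\lim_t \frac{1}{t}\int b^i_t\,d\mu$ uniformly away from zero, this forces $F$ to be coercive, so its infimum is attained at some $q^*$ with $\nabla F(q^*) = 0$. The first-order conditions rearrange to $\mathcal{P}_{A,B}(\mu_{q^*}) = \alpha$; combined with the variational principle \eqref{equation-BB} this gives
$$F(q^*) = h_{\mu_{q^*}}(\Phi) = \max\bigl\{h_\mu(\Phi) : \mu \in \mathcal{M}(\Phi,\Lambda),\ \mathcal{P}_{A,B}(\mu) = \alpha\bigr\}.$$

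Ergodicity of $\mu_{q^*}$ then follows from uniqueness: a nontrivial decomposition $\mu_{q^*} = \tfrac{1}{2}(\mu' + \mu'')$ with $\mu' \neq \mu''$ would, by affinity of the functional defining $F(q^*)$, produce two distinct equilibrium measures. With ergodicity in hand, the almost-additive Birkhoff ergodic theorem applied to $(a^i_t)$ and $(b^i_t)$ yields convergence of both $a^i_t(x)/t$ and $b^i_t(x)/t$ for $\mu_{q^*}$-a.e.\ $x$, whose ratio tends to $\alpha_i$. Hence $\mu_{q^*}(R_{A,B}(\alpha)) = 1$, so $R_{A,B}(\alpha) \neq \emptyset$, item (2) of the theorem is proved, and $\htop(R_{A,B}(\alpha)) \geq h_{\mu_{q^*}}(\Phi)$.

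The remaining and, in my judgement, most delicate step is the matching upper bound $\htop(R_{A,B}(\alpha)) \leq F(q^*)$. I would use a Carathéodory-type Bowen-ball covering argument adapted to almost additive families: fix $\delta > 0$; by the definition of $R_{A,B}(\alpha)$, for every $x \in R_{A,B}(\alpha)$ there is $T(x)$ with $\bigl|\sum_i q^*_i(a^i_t(x) - \alpha_i b^i_t(x))\bigr| \leq \delta t$ whenever $t \geq T(x)$. Covering $R_{A,B}(\alpha)$ by Bowen balls $B_t(x,\eps)$, inserting the neutral factor $\exp\bigl(\sum_i q^*_i(a^i_t - \alpha_i b^i_t)\bigr)\cdot e^{-\delta t}$ inside each summand, and invoking tempered variation to replace point values by sup-values on each $B_t(x,\eps)$, one matches the defining sum \eqref{equation-AC} for $F(q^*)$ and obtains $\htop(R_{A,B}(\alpha)) \leq F(q^*) + \delta\,\|q^*\|_1$. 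Sending $\delta \to 0$ closes the argument. The obstacle here is technical: the tempered-variation bound $\gamma_t(\cdot,\eps)/t \to 0$ must be invoked simultaneously for all $2d$ families so the comparison is uniform in $t$, and this careful bookkeeping inside the Carathéodory sum is where almost additivity and tempered variation become indispensable.
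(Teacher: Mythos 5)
The paper does not prove this theorem itself; it cites it from Barreira--Holanda \cite{BH21} (for $d=1$) together with the remark that Barreira--Doutor \cite{BarreiraDoutor2009} give the $d\geq 1$ discrete-time version. Your proof is a correct reconstruction of the standard Legendre-transform multifractal argument used in both of those references, and the key steps match: you introduce the parametric pressure $F(q) = P\bigl(\sum_i q_i(a^i - \alpha_i b^i),\Lambda\bigr)$, obtain differentiability of $F$ and the derivative formula from uniqueness of equilibrium states, locate a minimizer $q^*$ by coercivity (with $\alpha\in\mathrm{Int}(L_{A,B})$ plus the lower bound \eqref{equation-O} on $B$ supplying the growth), read off $\mathcal{P}_{A,B}(\mu_{q^*})=\alpha$ and $F(q^*)=h_{\mu_{q^*}}(\Phi)$ from the first-order condition, prove ergodicity of $\mu_{q^*}$ by the affinity-plus-uniqueness argument and then apply the Kingman-type ergodic theorem to cover $R_{A,B}(\alpha)$, and close with the Carath\'eodory covering inequality $\htop(R_{A,B}(\alpha))\leq F(q)+O(\delta)$ for every $q$. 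This is the Barreira-school route, and all the parts you flagged as needing care (uniform lower bound on $\lim_t\frac1t\int b^i_t\,d\mu$ from \eqref{equation-O} via compactness, invoking tempered variation simultaneously for all $2d$ families in the covering sum, and treating $R_{A,B}(\alpha)$ as a countable increasing union of sublevel sets so that $\htop$ behaves as a supremum) are precisely the technical points the original proofs address.
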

\begin{remark}
	In fact,  L. Barreira and C. Holanda in \cite[Theorem 9]{BH21} give the proof of Theorem \ref{BarreiraDoutor2009-theorem3} under the assumption $d=1.$ However, L. Barreira and P. Doutor in \cite[Theorem 3]{BarreiraDoutor2009} proved the case of discrete time for any $d\geq 1.$ Combing the two proofs, one can obtain Theorem \ref{BarreiraDoutor2009-theorem3} for any $d\geq 1.$
\end{remark}

\subsubsection{Conditional variational principle of asymptotically  additive families}
Let $\Phi=\left(\phi_t\right)_{t \in \mathbb{R}}$ be a continuous flow on a compact metric space $(M,d),$ and $\Lambda\subset M$ be a compact $\phi_t$-invariant set. A family of functions $a=\left(a_t\right)_{t \geq 0}$ is said to be asymptotically additive with respect to $\Phi$ on $\Lambda$ if for each $\varepsilon>0$ there exists a function $b_{\varepsilon}: \Lambda \rightarrow \mathbb{R}$ such that
$$
\limsup _{t \rightarrow \infty} \frac{1}{t}\sup_{x\in\Lambda}|a_t(x)-\int_0^t\left(b_{\varepsilon} \circ \phi_s(x)\right) d s| \leq \varepsilon
$$
Let $AA(\Phi,\Lambda)$ be the set of all asymptotically additive families of continuous functions $a=\left(a_{t}\right)_{t \geqslant 0}$ on $\Lambda$ with tempered variation such that
$$
\sup _{t \in[0, s]}\left\|a_{t}\right\|_{\infty}<\infty \quad \text { for some } s>0.
$$
Proceeding as in \cite{FH2010}, one can see that every almost additive family of functions is asymptotically additive.

Now assume that $\Phi$ is   expansive on $\Lambda.$ C. Holanda in \cite[Corollary 6]{Holanda} proved that for any $a=\left(a_t\right)_{t \geq 0}\in AA(\Phi,\Lambda),$ there exists a continuous function $b: \Lambda \rightarrow \mathbb{R}$ such that
$$
\lim \limits_{t \rightarrow \infty} \frac{1}{t} \int_{\Lambda} a_{t} \mathrm{~d} \mu=\int_{\Lambda} bd\mu
$$
for  any $\mu\in \mathcal{M}(\Phi,\Lambda),$ 
This implies that  
the function,
\begin{equation}\label{equation-UU}
	\mathcal{M}(\Phi,\Lambda) \ni \mu \mapsto \lim _{t \rightarrow \infty} \frac{1}{t} \int_{\Lambda} a_{t} \mathrm{~d} \mu,
\end{equation}
is continuous with the weak* topology.
Let $d \in \mathbb{N}$ and  $(A, B) \in AA(\Phi,\Lambda)^{d} \times AA(\Phi,\Lambda)^{d}$ where 
$$
A=\left(a^{1}, \ldots, a^{d}\right) \text { and } B=\left(b^{1}, \ldots, b^{d}\right)
$$
and  $a^{i}=\left(a_{t}^{i}\right)_{t\in\R}$ and $b^{i}=\left(b_{t}^{i}\right)_{t\in\R}$. Assume that
\begin{equation}\label{equation-UV}
	\begin{split}
		&\lim _{t \rightarrow \infty} \frac{1}{t} \int_{\Lambda} b_t^i d \mu \geq 0 \quad \text { for all } \mu \in \mathcal{M}(\Phi,\Lambda)\\
		 &\text{ with equality only permitted when } \lim _{t \rightarrow \infty} \frac{1}{t} \int_{\Lambda} a_t^i d \mu \neq 0
	\end{split}
\end{equation}
for every $i=1, \ldots, d.$ 
Given $\alpha=\left(\alpha_1, \ldots, \alpha_{d}\right) \in \mathbb{R}^{d}$ we define:
$$
R_{A,B}(\alpha)=\bigcap_{i=1}^{d}\left\{x \in \Lambda: \lim _{t \rightarrow \infty} \frac{a_{t}^{i}(x)}{b_{t}^{i}(x)}=\alpha_{i}\right\}.
$$
We also define the map $\mathcal{P}_{A,B}: \mathcal{M}(\Phi,\Lambda) \rightarrow \mathbb{R}$ by:
\begin{equation}\label{equation-UW}
	\mathcal{P}_{A,B}(\mu) 
	=\lim _{t \rightarrow \infty}\left(\frac{\int a_{t}^{1} d \mu}{\int b_{t}^{1} d \mu}, \ldots, \frac{\int a_{t}^{d} d \mu}{\int b_{t}^{d} d \mu}\right).
\end{equation}
\eqref{equation-UU} ensures that  the function $\mathcal{P}_{A,B}$ is continuous. Denote
$$L_{A,B}=\{\mathcal{P}_{A,B}(\mu):\mu\in \mathcal{M}(\Phi,\Lambda)\}.$$

Without using the uniqueness of equilibrium assumption, C. Holanda  obtain a conditional variational principle for asymptotically additive families of continuous functions.
\begin{theorem}\cite[Theorem 11]{Holanda}\label{condition-principle}
	Given $X\in\mathscr{X}^1(M)$ and an invariant compact  set $\Lambda$. Assume that $\Lambda$ is a basic set or a horseshoe of $\phi_{t}.$  Let $d \in \mathbb{N}$ and $(A, B) \in AA(\Phi,\Lambda)^{d} \times AA(\Phi,\Lambda)^{d}$ satisfying \eqref{equation-UV}.
	If $\alpha \in \mathrm{Int}(L_{A,B})$, then $R_{A,B}(\alpha)\neq \emptyset$, and the following properties hold:
	\begin{description}
		\item[(1)] $\htop(R_{A,B}(\alpha))$ satisfies the variational principle:
		$$
		\htop(R_{A,B}(\alpha))=\sup \left\{h_{\mu}(\Phi): \mu \in \mathcal{M}(\Phi,\Lambda) \text { and } \mathcal{P}_{A,B}(\mu)=\alpha\right\}.
		$$
		\item[(2)] For any $\varepsilon>0$ there exists an ergodic measure $\mu_{\alpha} \in \mathcal{M}(\Phi,\Lambda) $ such that $\mathcal{P}_{A,B}\left(\mu_{\alpha}\right)=\alpha, \mu_{\alpha}\left(R_{A,B}(\alpha)\right)=1$, and
		$$
		|\htop(R_{A,B}(\alpha))-h_{\mu_{\alpha}}(\Phi)|<\varepsilon.
		$$
	\end{description}
\end{theorem}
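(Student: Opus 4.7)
The plan is to deduce the theorem from the almost-additive conditional variational principle of Theorem \ref{BarreiraDoutor2009-theorem3} by approximating each asymptotically additive family by the Birkhoff sum of a continuous function. By the very definition of asymptotic additivity, for each $\delta>0$ there exist continuous functions $\alpha^i_\delta,\beta^i_\delta\in C(\Lambda,\mathbb{R})$ satisfying
\[
\limsup_{t\to\infty}\frac{1}{t}\sup_{x\in\Lambda}\Bigl|a^i_t(x)-\int_0^t\alpha^i_\delta(\phi_s x)\,ds\Bigr|\leq \delta,\qquad
\limsup_{t\to\infty}\frac{1}{t}\sup_{x\in\Lambda}\Bigl|b^i_t(x)-\int_0^t\beta^i_\delta(\phi_s x)\,ds\Bigr|\leq \delta.
\]
Set $A_\delta^i=\bigl(\int_0^t\alpha^i_\delta(\phi_s\cdot)\,ds\bigr)_{t\geq 0}$ and similarly $B_\delta^i$, collected into vector families $A_\delta$ and $B_\delta$. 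Each coordinate is additive, hence almost additive, and since basic sets and horseshoes are expansive, Birkhoff sums of continuous functions have bounded variation.

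Next, Theorem \ref{theorem-AA} supplies a unique equilibrium measure for every element of $\mathrm{span}\{A_\delta^1,B_\delta^1,\ldots,A_\delta^d,B_\delta^d,u\}$, so this span is contained in $E(\Phi,\Lambda)$, and expansivity gives the upper semi-continuity of the entropy function required by Theorem \ref{BarreiraDoutor2009-theorem3}. By \eqref{equation-UU} the map $\mathcal{P}_{A_\delta,B_\delta}$ is $O(\delta)$-close to $\mathcal{P}_{A,B}$ uniformly on $\mathcal{M}(\Phi,\Lambda)$, so $L_{A_\delta,B_\delta}$ Hausdorff-converges to $L_{A,B}$ and any fixed $\alpha\in\mathrm{Int}(L_{A,B})$ lies in $\mathrm{Int}(L_{A_\delta,B_\delta})$ for all small $\delta$. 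Applying Theorem \ref{BarreiraDoutor2009-theorem3} to $(A_\delta,B_\delta)$ yields an ergodic $\mu_\delta\in\mathcal{M}(\Phi,\Lambda)$ with $\mathcal{P}_{A_\delta,B_\delta}(\mu_\delta)=\alpha$, $\mu_\delta(R_{A_\delta,B_\delta}(\alpha))=1$, and $h_{\mu_\delta}(\Phi)=\max\{h_\mu(\Phi):\mu\in\mathcal{M}(\Phi,\Lambda),\ \mathcal{P}_{A_\delta,B_\delta}(\mu)=\alpha\}$.

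Finally, the uniform approximation of the numerators and denominators implies that $R_{A,B}(\alpha)$ and $R_{A_\delta,B_\delta}(\alpha)$ differ only on points where the relevant Birkhoff limit shifts by $O(\delta)$; combined with the continuity of the variational supremum in the constraint parameter, this gives the transfer $|\htop(R_{A,B}(\alpha))-\htop(R_{A_\delta,B_\delta}(\alpha))|=O(\delta)$ and in particular $R_{A,B}(\alpha)\neq\emptyset$. Choosing $\delta$ sufficiently small as a function of $\varepsilon$ produces the desired $\mu_\alpha$. The main obstacle is that ergodicity is not preserved under weak-$^*$ limits, so one cannot simply take $\delta\to 0$; this is the structural reason the conclusion gives only $|\htop(R_{A,B}(\alpha))-h_{\mu_\alpha}(\Phi)|<\varepsilon$ rather than equality. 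The exact constraint $\mathcal{P}_{A,B}(\mu_\alpha)=\alpha$, if not automatically met by $\mu_\delta$, can be restored by a small convex perturbation inside a horseshoe of nearly full entropy delivered by the 'multi-horseshoe' dense property of Theorem \ref{Mainlemma-convex-by-horseshoe5}, chosen to neutralize the $O(\delta)$ mismatch without appreciable entropy loss.
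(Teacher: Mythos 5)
The paper does not actually prove this statement: it quotes Holanda's Theorem 11 and, in the remark immediately following, merely explains how to pass from Holanda's case ($d=1$, mixing basic set) to arbitrary $d$ and general basic sets/horseshoes, using Climenhaga's multi-dimensional variational principle, Cuneo's equivalence of asymptotically additive and additive sequences, and the classical uniqueness of equilibrium states for H\"older potentials with respect to the Poincar\'e/transfer map. Your proposal instead tries to deduce the theorem directly from Theorem \ref{BarreiraDoutor2009-theorem3} via an approximation argument, which is a genuinely different route — but it contains a gap that is not easy to patch.

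The critical error is the claim that, because basic sets and horseshoes are expansive, Birkhoff integrals $\int_0^t\alpha^i_\delta(\phi_s\cdot)\,ds$ of merely continuous $\alpha^i_\delta$ have bounded variation. Bounded variation in the sense used throughout the paper (and needed to invoke Theorem \ref{theorem-AA}, hence to place the span in $E(\Phi,\Lambda)$) is the uniform-in-$t$ Bowen/Walters condition: for every $\kappa>0$ there is $\varepsilon>0$ with $|a_t(x)-a_t(y)|<\kappa$ whenever $y\in B_t(x,\varepsilon)$. For a Birkhoff integral of a continuous $g$ you only get $|a_t(x)-a_t(y)|\le t\cdot\omega_g(\varepsilon)$, which blows up as $t\to\infty$ unless $g$ has extra regularity (H\"older or a Walters-type modulus). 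Expansivity supplies none of this. Consequently the hypothesis $\mathrm{span}\{A^1_\delta,B^1_\delta,\dots,u\}\subseteq E(\Phi,\Lambda)$ of Theorem \ref{BarreiraDoutor2009-theorem3} is not established, and the whole chain collapses. To repair it you would have to insert a second approximation — by H\"older functions, dense in $C(\Lambda,\mathbb{R})$ — and then argue that the resulting error propagates harmlessly; this is precisely the role Climenhaga's general variational principle (with approximation) plays in Holanda's original argument, and it is not something one gets for free from Theorem \ref{BarreiraDoutor2009-theorem3}.

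Two further, smaller concerns. First, the final ``multi-horseshoe convex perturbation'' step does not produce an ergodic measure: a convex combination of ergodic measures is typically not ergodic, so this cannot restore the exact constraint $\mathcal{P}_{A,B}(\mu_\alpha)=\alpha$ while keeping ergodicity, which the theorem demands. Second, invoking Theorem \ref{Mainlemma-convex-by-horseshoe5} here risks circularity within the paper's logic: the multi-horseshoe dense property is part of the machinery the paper develops \emph{on top of} this conditional variational principle, so it should not be used to establish it.
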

\begin{remark}
		Using the work of Climenhaga  \cite[Theorem 3.3]{Climen2013} and Cuneo \cite[Theorem 1.2]{Cuneo2020}, and the conclusion that every Hölder  continuous function has a unique equilibrium measure with respect to the map $T$(recall that $T$ is the transfer map, see Section \ref{section-basic set}), C. Holanda in \cite[Theorem 11]{Holanda} give the proof of Theorem \ref{condition-principle} under the assumption $d=1$ and $\Lambda$ is a mixing basic set. However, \cite[Theorem 3.3]{Climen2013} is stated for any $d\geq 1$ and by \cite[Example 2]{Bowen74} every Hölder  continuous function has a unique equilibrium measure with respect to the map $T$ when $\Lambda$ is a just transitive basic set. So one can obtain Theorem \ref{condition-principle} for any $d\geq 1$ and any basic set $\Lambda.$ When $\Lambda$ is a horseshoe of $\phi_{t},$ we can verify the results also hold following the argument of \cite[Theorem 11]{Holanda}.
\end{remark}

\subsection{Abstract conditions on which  Theorem \ref{Thm:basic-set1} and \ref{Thm:basic-set2} hold}\label{section-abstract conditions}
Let $\Phi=\left(\phi_t\right)_{t \in \mathbb{R}}$ be a continuous flow on a compact metric space $(M,d),$ and $\Lambda\subset M$ be a compact $\phi_t$-invariant set. Assume that $\Phi$ is   expansive on $\Lambda.$ 
Let $d \in \mathbb{N}$ and  $(A, B) \in AA(\Phi,\Lambda)^{d} \times AA(\Phi,\Lambda)^{d}.$
For any $\alpha\in  L_{A,B},$ denote  $$M_{A,B}(\alpha)=\{\mu\in \mathcal{M}(\Phi,\Lambda):\mathcal{P}_{A,B}(\mu)=\alpha\},\ M_{A,B}^{erg}(\alpha)=\{\mu\in \mathcal{M}_{erg}(\Phi,\Lambda):\mathcal{P}_{A,B}(\mu)=\alpha\}.$$
Then $M_{A,B}(\alpha)$ is closed in $\mathcal{M}(\Phi,\Lambda)$ since the map $\mathcal{P}_{A,B}$ is continuous.

Let $\chi:\mathcal{M}(\Phi,\Lambda)\to \mathbb{R}$ be a continuous function. 
We define the pressure of $\chi$ with respect to $\mu$ by $P(\Phi,\chi,\mu)=h_\mu(\Phi)+\chi(\mu).$ 
 Now we give  a result in the  context of asymptotically additive families.

\begin{theorem}\label{thm-Almost-Additive}
	Suppose that $\Phi=(\phi_t)_{t\in\mathbb{R}}$  is a continuous flow on a compact metric space $(M,d),$ and $\Lambda\subset M$ is a compact invariant set such  that $\Phi$ is expansive on $\Lambda.$  Let $d \in \mathbb{N}$ and $(A, B) \in AA(\Phi,\Lambda)^{d} \times AA(\Phi,\Lambda)^{d}$ 
	satisfying \eqref{equation-UV}. Let $\chi:\mathcal{M}(\Phi,\Lambda)\to \mathbb{R}$ be a continuous function.  Assume that the following holds: 
	for any $F=\cov\{\mu_i\}_{i=1}^m\subseteq \mathcal{M}(\Phi,\Lambda),$ and any $\eta,  \zeta>0$,   there are compact invariant subsets $\Lambda_i\subseteq\Theta\subsetneq \Lambda$ such that for each $i\in\{1,2,\cdots,m\}$
	\begin{description}
		\item[(1)] for any $a\in \mathrm{Int}(\mathcal{P}_{A,B}(\mathcal{M}(\Phi,\Theta)))$ and any $\varepsilon>0,$ there exists an ergodic measure $\mu_a$ supported on $\Theta$ with $\mathcal{P}_{A,B}(\mu_a)=a$
		such that $|h_{\mu_a}(\Phi)-H(\Phi,a,\Theta)|<\varepsilon,$ where $H(\Phi,a,\Theta)=\sup\{h_\mu(\Phi):\mu\in \mathcal{M}(\Phi,\Theta) \text{ and }\mathcal{P}_{A,B}(\mu)=a\}.$
		\item[(2)] $\htop(\Lambda_i)>h_{\mu_i}(\Phi)-\eta.$
		\item[(3)] $d_H(K,  \mathcal{M}(\Phi,  \Theta))<\zeta$,   $d_H(\mu_i,  \mathcal{M}(\Phi,  \Lambda_i))<\zeta.$
	\end{description}
	Then for any $\alpha\in \mathrm{Int}(L_{A,B}),$ any $\mu_0\in M_{A,B}(\alpha)$ and any $\eta,  \zeta>0$, there is $\nu\in M_{A,B}^{erg}(\alpha)$ such that $d^*(\nu,\mu_0)<\zeta$ and $|P(\Phi,\chi,\nu)-P(\Phi,\chi,\mu_0)|<\eta.$
\end{theorem}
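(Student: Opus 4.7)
The strategy is to engulf the constraint $\mathcal{P}_{A,B}(\cdot)=\alpha$ in a subsystem $\Theta\subseteq\Lambda$ on which the hypothesis yields conditional equilibrium measures, and then to select an ergodic one close to $\mu_0$. Write $\bar a^j(\mu):=\lim_{t\to\infty}\frac{1}{t}\int a_t^j\,d\mu$ and $\bar b^j(\mu):=\lim_{t\to\infty}\frac{1}{t}\int b_t^j\,d\mu$; by (\ref{equation-UU}) these are continuous linear functionals on $\mathcal{M}(\Phi,\Lambda)$, and $\bar b^j(\mu_0)>0$ so that $\mathcal{P}_{A,B}(\mu_0)=\alpha$ is well defined. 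Since $\alpha\in\mathrm{Int}(L_{A,B})$, I choose $\nu_1,\dots,\nu_{d+1}\in\mathcal{M}(\Phi,\Lambda)$ so that the $d+1$ vectors $c(\nu_k):=(\bar a^j(\nu_k)-\alpha_j\bar b^j(\nu_k))_{j=1}^d\in\RR^d$ form a $d$-simplex containing $0$ in its interior (the linear image $\nu\mapsto c(\nu)$ contains a neighborhood of $0$ because $\mathcal{P}_{A,B}$ surjects onto a neighborhood of $\alpha$ and the relevant $\bar b^j$ are positive). Fix a small $\tau>0$, set $\mu_k:=(1-\tau)\mu_0+\tau\nu_k$ for $k=1,\dots,d+1$, and let $F:=\cov\{\mu_0,\mu_1,\dots,\mu_{d+1}\}$; every measure in $F$ lies within $\tau\cdot\diam\mathcal{M}(\Phi,\Lambda)$ of $\mu_0$ in $d^*$.

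Apply the hypothesis to $F$ with small parameters $\eta',\zeta'>0$ (to be fixed later with $\zeta'\ll\tau$), obtaining compact invariant $\Lambda_0,\dots,\Lambda_{d+1}\subseteq\Theta\subsetneq\Lambda$ satisfying (1)-(3). For each $k$, use (2) and the variational principle to pick $\tilde\nu_k\in\mathcal{M}(\Phi,\Lambda_k)$ with $h_{\tilde\nu_k}(\Phi)>h_{\mu_k}(\Phi)-2\eta'$; condition (3) then forces $d^*(\tilde\nu_k,\mu_k)<\zeta'$. Consider the continuous map $\Psi(\lambda):=\mathcal{P}_{A,B}(\sum_k\lambda_k\tilde\nu_k)$ on the $(d+1)$-simplex $\{\lambda\ge 0:\sum_k\lambda_k=1\}$. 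Using linearity of $\bar a^j,\bar b^j$, a direct calculation shows the Jacobian of $\Psi$ at $\lambda=e_0$ equals $\tau\cdot D+o(\tau)$, where the $k$-th column of $D\in\RR^{d\times(d+1)}$ is $(c^j(\nu_k)/\bar b^j(\mu_0))_{j=1}^d$, a positive-diagonal rescaling of $c(\nu_k)$; hence $D$ has rank $d$ by the simplex property of $\{c(\nu_k)\}$. Taking $\zeta'$ small enough that $\|\mathcal{P}_{A,B}(\tilde\nu_0)-\alpha\|=o(\tau)$, the implicit function theorem produces $\lambda^*$ near $e_0$ with $\Psi(\lambda^*)=\alpha$. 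Setting $\tilde\mu:=\sum_k\lambda_k^*\tilde\nu_k\in\mathcal{M}(\Phi,\Theta)$, one has $\mathcal{P}_{A,B}(\tilde\mu)=\alpha$, $d^*(\tilde\mu,\mu_0)=O(\tau+\zeta')$, and by the affinity of entropy $h_{\tilde\mu}(\Phi)=\sum_k\lambda_k^*h_{\tilde\nu_k}(\Phi)\ge h_{\mu_0}(\Phi)-O(\tau+\eta')$.

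The same Jacobian argument yields $\alpha\in\mathrm{Int}(\mathcal{P}_{A,B}(\mathcal{M}(\Phi,\Theta)))$, so for any $\varepsilon>0$ hypothesis (1) provides an ergodic $\nu\in\mathcal{M}(\Phi,\Theta)$ with $\mathcal{P}_{A,B}(\nu)=\alpha$ and $h_\nu(\Phi)\ge h_{\tilde\mu}(\Phi)-\varepsilon\ge h_{\mu_0}(\Phi)-O(\tau+\eta'+\varepsilon)$. Condition (3) forces $d^*(\nu,\mu_0)\le\zeta'+\tau\cdot\diam\mathcal{M}(\Phi,\Lambda)$; since $\Phi$ is expansive on $\Lambda$, the entropy map is upper semi-continuous, giving the matching upper bound $h_\nu(\Phi)\le h_{\mu_0}(\Phi)+\eta/4$ once $d^*(\nu,\mu_0)$ is sufficiently small. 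Combined with continuity of $\chi$ at $\mu_0$, fixing the parameters in the order $\nu_k\rightsquigarrow\tau\rightsquigarrow\eta',\zeta',\varepsilon$ (each chosen small relative to those already fixed) produces $\nu$ with $d^*(\nu,\mu_0)<\zeta$ and $|P(\Phi,\chi,\nu)-P(\Phi,\chi,\mu_0)|<\eta$. The main obstacle is the implicit function argument for $\Psi$: because $\mathcal{P}_{A,B}$ is a ratio of linear functionals rather than linear, convexity alone does not deliver that $\Psi(\text{simplex})$ contains $\alpha$, and one must compute the Jacobian explicitly, verify the full-rank condition from the simplex choice of $\{c(\nu_k)\}$, and enforce the scale hierarchy $\zeta'\ll\tau\ll 1$ to get $\lambda^*$ near $e_0$.
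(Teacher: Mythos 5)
Your proposal is correct in outline and reaches the same destination, but it takes a noticeably different route from the paper's. The paper (via Lemma~\ref{lemma-D}, Lemma~\ref{lemma-C} and Corollary~\ref{corollary-A}) constructs $2^d$ perturbations $\mu_\xi$ of $\mu_0$ with $\mathcal{P}_{A,B}(\mu_\xi)\in\alpha^\xi$, one in each coordinate orthant around $\alpha$, applies the hypothesis to $F=\cov\{\mu_\xi\}$, and then recovers $\alpha$ exactly by the inductive Lemma~\ref{lemma-C}: for ratio-vectors sitting in all $2^d$ orthants, one coordinate at a time the intermediate-value property of $\theta\mapsto\frac{\theta p^1+(1-\theta)p^2}{\theta q^1+(1-\theta)q^2}$ (Lemma~\ref{lemma-E}) produces a convex combination hitting $\alpha$. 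You instead use $d+1$ directions forming a simplex containing the origin and invoke an implicit-function/Jacobian argument on $\Psi$. Both achieve the same goal -- producing a measure in $\mathcal{M}(\Phi,\Theta)$ with $\mathcal{P}_{A,B}=\alpha$ and controlled entropy -- so after that both proofs merge: use hypothesis~(1), upper semi-continuity of entropy (from expansivity), and continuity of $\chi$ to close the estimates.

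Two remarks on your route. First, your stated ``main obstacle'' is partially a mirage: the constraint $\mathcal{P}_{A,B}\bigl(\sum_k\lambda_k\tilde\nu_k\bigr)=\alpha$ is \emph{equivalent}, after clearing denominators, to the linear system $\sum_k\lambda_k\,c^j(\tilde\nu_k)=0$ for all $j$, with $c^j(\nu)=\bar a^j(\nu)-\alpha_j\bar b^j(\nu)$. So you do not need the implicit function theorem at all -- linear algebra suffices, and this is precisely the content of the paper's Lemma~\ref{lemma-E}/\ref{lemma-C}, just packaged combinatorially. Second, the genuine subtlety you should flag is not full rank but \emph{positive} span: the solution $\lambda^*$ must lie in the simplex, and $e_0$ is a vertex, so you can only move in directions $e_k-e_0$ with $k\geq1$. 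The simplex-containing-$0$ choice of $\{c(\nu_k)\}$ does guarantee a strictly positive combination $\sum\theta_k c(\nu_k)=0$, and this positivity (robust under small perturbation) is what lets you place $\lambda^*$ in the open simplex; the paper's $2^d$-orthant choice plays the identical role. You should also say explicitly that $\alpha\in\mathrm{Int}(\mathcal{P}_{A,B}(\mathcal{M}(\Phi,\Theta)))$ follows from this positive span, since hypothesis~(1) requires it. With these clarifications your argument is valid; it buys nothing over the paper's $2^d$-orthant inductive approach and is somewhat heavier, but it is a legitimate alternative.
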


For any $\alpha\in  L_{A,B},$ denote $$H_{A,B}(\Phi,\chi,\alpha)=\sup\{P(\phi,\chi,\mu):\mu\in M_{A,B}(\alpha)\}.$$ In particular, when $\chi\equiv0,$ we write
$$H_{A,B}(\Phi,\alpha)=H_{A,B}(\Phi,0,\alpha)=\sup\{h_\mu(\Phi):\mu\in M_{A,B}(\alpha)\}.$$
We list two conditions for $\chi:$
\begin{description}
	\item[(A.1)] For any $\mu_1, \mu_2 \in \mathcal{M}(\Phi,\Lambda)$ with $\chi(\mu_1) \neq \chi(\mu_2)$
	\begin{equation}\label{equation-W}
		\beta(\theta):=\chi(\theta \mu_1+(1-\theta) \mu_2)\text{ is strictly monotonic on }[0,1].
	\end{equation} 
	\item[(A.2)] For any $\mu_1, \mu_2 \in \mathcal{M}(\Phi,\Lambda)$ with $\chi(\mu_1) = \chi(\mu_2)$
	\begin{equation}\label{equation-AF}
		\beta(\theta):=\chi(\theta \mu_1+(1-\theta) \mu_2)\text{ is constant on }[0,1].
	\end{equation} 
\end{description}

Now we give  abstract conditions on which  Theorem \ref{Thm:basic-set1} and \ref{Thm:basic-set2} hold in the  context of asymptotically additive families.
\begin{theorem}\label{thm-Almost-Additive2}
	Suppose that $\Phi=(\phi_t)_{t\in\mathbb{R}}$  is a continuous flow on a compact metric space $(M,d),$ and $\Lambda\subset M$ is a compact invariant set such  that $\Phi$ is expansive on $\Lambda.$    Let $d \in \mathbb{N}$ and $(A, B) \in AA(\Phi,\Lambda)^{d} \times AA(\Phi,\Lambda)^{d}$ satisfying \eqref{equation-UV}. Let $\chi:\mathcal{M}(\Phi,\Lambda)\to \mathbb{R}$ be a continuous function satisfying \eqref{equation-W} and \eqref{equation-AF}. 
	Assume that for any $\alpha\in \mathrm{Int}(L_{A,B}),$ any $\mu_0\in M_{A,B}(\alpha)$ and any $\eta,  \zeta>0$, there is $\nu\in M_{A,B}^{erg}(\alpha)$ such that $d^*(\nu,\mu_0)<\zeta$ and $|P(\Phi,\chi,\nu)-P(\Phi,\chi,\mu_0)|<\eta.$
	If $\{\mu\in \mathcal{M}(\Phi,\Lambda):h_{\mu}(\Phi)=0\}$ is dense in $\mathcal{M}(\Phi,\Lambda),$ then we have
	\begin{description}
		\item[(1)] 
		For any $\alpha\in \mathrm{Int}(L_{A,B}),$ any $\mu_0\in M_{A,B}(\alpha),$ any $\max_{\mu\in M_{A,B}(\alpha)}\chi(\mu)\leq c\leq P(\phi,\chi,\mu_0)$ and any $\eta,  \zeta>0$, there is $\nu\in M_{A,B}^{erg}(\alpha)$ such that $d^*(\nu,\mu_0)<\zeta$ and $|P(\Phi,\chi,\nu)-c|<\eta.$ 
		\item[(2)]  For any $\alpha\in \mathrm{Int}(L_{A,B})$ and $\max_{\mu\in M_{A,B}(\alpha)}\chi(\mu)\leq c< H_{A,B}(\Phi,\chi,\alpha),$ the set $\{\mu\in M_{A,B}^{erg}(\alpha):P(\Phi,\chi,\mu)=c\}$ is residual in $\{\mu\in M_{A,B}(\alpha):P(\Phi,\chi,\mu)\geq c\}.$ If further there is an invariant measure $\tilde{\mu}$ with $S_{\tilde{\mu}}=\Lambda$, then for any $\alpha\in \mathrm{Int}(L_{A,B})$ and $\max_{\mu\in M_{A,B}(\alpha)}\chi(\mu)\leq c< H_{A,B}(\Phi,\chi,\alpha),$ the set $\{\mu\in M_{A,B}^{erg}(\alpha):P(\Phi,\chi,\mu)=c,\ S_\mu=\Lambda\}$ is residual in $\{\mu\in M_{A,B}(\alpha):P(\Phi,\chi,\mu)\geq c\}.$
		\item[(3)] The set $\{(\mathcal{P}_{A,B}(\mu), P(\Phi,\chi,\mu)):\mu\in \mathcal{M}(\Phi,\Lambda),\ \alpha=\mathcal{P}_{A,B}(\mu)\in\mathrm{Int}(L_{A,B}),\ \max_{\mu\in M_{A,B}(\alpha)}\chi(\mu)\\
		\leq P(\Phi,\chi,\mu)< H_{A,B}(\Phi,\chi,\alpha)\}$ coincides with $\{(\mathcal{P}_{A,B}(\mu), P(\Phi,\chi,\mu)):\mu\in \mathcal{M}_{erg}(\Phi,\Lambda),\ \alpha\\
		=\mathcal{P}_{A,B}(\mu)\in\mathrm{Int}(L_{A,B}), \max_{\mu\in M_{A,B}(\alpha)}\chi(\mu)\leq P(\Phi,\chi,\mu)< H_{A,B}(\Phi,\chi,\alpha)\}.$
	\end{description}
\end{theorem}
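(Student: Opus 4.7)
The plan is to deduce (1) from the hypothesis by sandwiching $\mu_0$ between itself and a zero-entropy measure lying in the same level set $M_{A,B}(\alpha)$, then using the intermediate value theorem on the pressure along the connecting segment; conclusions (2) and (3) then follow from (1) via Baire category and a direct repackaging. I first record two structural facts. First, $M_{A,B}(\alpha)$ is convex: writing each coordinate of $\mathcal{P}_{A,B}(\mu)$ as the ratio $A^i(\mu)/B^i(\mu)$ where $A^i,B^i$ are affine in $\mu$ (asymptotic time-averages of asymptotically additive families, cf.\ \eqref{equation-UU}), the condition $\mathcal{P}_{A,B}(\mu)=\alpha$ is equivalent to the linear system $A^i(\mu)=\alpha_i B^i(\mu)$, whose solution set is convex and closed. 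Second, expansivity of $\Phi$ on $\Lambda$ makes the entropy function $\mu\mapsto h_\mu(\Phi)$, and hence $\mu\mapsto P(\Phi,\chi,\mu)=h_\mu(\Phi)+\chi(\mu)$, upper semi-continuous.

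For (1), fix $\alpha,\mu_0,c,\eta,\zeta$ as in the statement. The core intermediate step is to produce a zero-entropy measure $\mu_z\in M_{A,B}(\alpha)$ with $d^*(\mu_z,\mu_0)<\zeta/4$. Using density of $\{h_\mu(\Phi)=0\}$ in $\mathcal{M}(\Phi,\Lambda)$ I first choose a zero-entropy $\tilde{\mu}_0$ close to $\mu_0$; typically $\mathcal{P}_{A,B}(\tilde{\mu}_0)\neq\alpha$, so since $\alpha\in\mathrm{Int}(L_{A,B})$ I pick finitely many auxiliary measures whose $\mathcal{P}_{A,B}$-values form a $d$-simplex around $\alpha$, approximate each by a zero-entropy measure by density, and write $\alpha$ as a convex combination of these $\mathcal{P}_{A,B}$-values placing almost all the weight on $\tilde{\mu}_0$. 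The corresponding convex combination $\mu_z$ of the zero-entropy measures then satisfies $\mathcal{P}_{A,B}(\mu_z)=\alpha$ (by the linear characterization of $M_{A,B}(\alpha)$), $h_{\mu_z}(\Phi)=0$ (by affinity of entropy), and lies arbitrarily close to $\mu_0$. With $\mu_z$ in hand, the segment $\mu_\theta=(1-\theta)\mu_0+\theta\mu_z$ stays in $M_{A,B}(\alpha)$, and by affinity of entropy and continuity of $\chi$ the map $\theta\mapsto P(\Phi,\chi,\mu_\theta)=(1-\theta)h_{\mu_0}(\Phi)+\chi(\mu_\theta)$ is continuous, with $P(\Phi,\chi,\mu_0)\geq c$ at $\theta=0$ and $\chi(\mu_z)\leq\max_{\mu\in M_{A,B}(\alpha)}\chi(\mu)\leq c$ at $\theta=1$. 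The intermediate value theorem produces $\theta^*\in[0,1]$ with $P(\Phi,\chi,\mu_{\theta^*})=c$, and the standing hypothesis of the theorem, applied to $\mu_{\theta^*}\in M_{A,B}(\alpha)$ with tolerances $(\eta,\zeta/2)$, yields $\nu\in M_{A,B}^{erg}(\alpha)$ with $d^*(\nu,\mu_{\theta^*})<\zeta/2$ and $|P(\Phi,\chi,\nu)-c|<\eta$; together with $d^*(\mu_{\theta^*},\mu_0)\leq d^*(\mu_z,\mu_0)<\zeta/4$ (from translation invariance of $d^*$ on convex combinations), the triangle inequality gives $d^*(\nu,\mu_0)<\zeta$.

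For (2), the set $\{P(\Phi,\chi,\cdot)\geq c\}\cap M_{A,B}(\alpha)$ is closed by upper semi-continuity (hence Polish), the subset $\{P(\Phi,\chi,\cdot)=c\}$ is $G_\delta$ inside it, $M_{A,B}^{erg}(\alpha)$ is a $G_\delta$ in $M_{A,B}(\alpha)$ by Parthasarathy's theorem, and if an invariant measure of full support exists then $\{S_\mu=\Lambda\}$ is a dense $G_\delta$; the density in $\{P\geq c\}\cap M_{A,B}(\alpha)$ of each approximating layer $\{|P(\Phi,\chi,\cdot)-c|<1/n\}\cap M_{A,B}^{erg}(\alpha)$ is furnished by (1) with successively smaller tolerances, and Baire's theorem delivers the claimed residuality. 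Conclusion (3) is a direct repackaging: by (1) and (2) every pair $(\alpha,c)$ in the stated range is realized by some ergodic measure, while the reverse inclusion is trivial. The principal technical obstacle is the construction of $\mu_z$ inside $M_{A,B}(\alpha)$ with prescribed closeness to $\mu_0$; it rests on the interior assumption on $\alpha$ together with convexity of $M_{A,B}(\alpha)$, and conditions (A.1) and (A.2) on $\chi$ enter to guarantee that the pressure varies in a well-controlled way along convex segments so that the IVT delivers exactly the target value $c$.
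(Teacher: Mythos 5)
Your argument follows essentially the same route as the paper: both proofs reduce to producing a zero-entropy measure in $M_{A,B}(\alpha)$ near $\mu_0$ by an octant/simplex correction (the paper's Lemma \ref{lemma-D} plus Corollary \ref{corollary-A}, mirrored in your "auxiliary measures forming a $d$-simplex around $\alpha$" step), then running an intermediate-value argument along the convex segment (the paper's Lemma \ref{lemma-A}(3)), and finally obtaining residuality from intersections of dense $G_\delta$ sets. Your bookkeeping in part (2) is slightly more compressed than the paper's: you use the already-established (1) to obtain density of each layer $\{|P(\Phi,\chi,\cdot)-c|<1/n\}\cap M_{A,B}^{erg}(\alpha)$, while the paper proves density of $\{\mu\in M_{A,B}^{erg}(\alpha):P\geq c\}$ from the standing hypothesis and separately establishes residuality of $\{P=c\}$ and of the full-support set; both routes work.

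There is one genuine omission in your treatment of the full-support clause of (2). You write that "if an invariant measure of full support exists then $\{S_\mu=\Lambda\}$ is a dense $G_\delta$", but this is dense in $\mathcal{M}(\Phi,\Lambda)$, not automatically in the subspace $\{\mu\in M_{A,B}(\alpha):P(\Phi,\chi,\mu)\geq c\}$. To conclude residuality inside the subspace, one must first exhibit a full-support measure belonging to that subspace. The paper does this (Lemma \ref{lemma-A}(2)): it constructs a full-support measure $\omega\in M_{A,B}(\alpha)$ by the same $2^d$-octant convex-combination trick (this time approximating by full-support rather than zero-entropy measures), and then uses condition (A.1) to push a convex combination of $\omega$ with a high-pressure measure into $\{P>c\}$ while retaining full support; the residuality inside the convex set then follows from Lemma \ref{lemma-B}. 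Without this step your intersection argument is not justified. A secondary, cosmetic point: you invoke "translation invariance of $d^*$ on convex combinations" to bound $d^*(\mu_{\theta^*},\mu_0)$ by $d^*(\mu_z,\mu_0)$, but translation invariance alone is not quite the right justification — what is needed (and holds for the standard metrics used here) is the convexity estimate $d^*\bigl(\theta\mu+(1-\theta)\nu,\nu\bigr)\leq\theta\, d^*(\mu,\nu)$, which the paper implicitly relies on in the same place.
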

\begin{example}\label{example-1}
	The function $\chi:\mathcal{M}(\Phi,\Lambda)\to \mathbb{R}$ can be defined as  following:
	\begin{description}
		\item[(1)] $\chi\equiv0.$ Then $P(\Phi,\chi,\mu)=h_\mu(\Phi)$ is the metric entropy of $\mu.$
		\item[(2)] $\chi(\mu)=\int g d \mu$ with a continuous function $g.$ Then from the weak{*}-topology on $\mathcal{M}(\Phi,\Lambda),$ $\chi:\mathcal{M}(\Phi,\Lambda)\to \mathbb{R}$ is a continuous function. $P(\Phi,\chi,\mu)=h_\mu(\Phi)+\chi(\mu)$ is the pressure of $g$ with respect to $\mu.$
		\item[(3)] $\chi(\mu)=\lim\limits_{t \rightarrow \infty} \frac{1}{t} \int a_{t} d \mu$ with an asymptotically  additive families of continuous functions $a=\left(a_{t}\right)_{t\in\R}$ on $\Lambda.$ Then $\chi:\mathcal{M}(\Phi,\Lambda)\to \mathbb{R}$ is a continuous function from (\ref{equation-UU}). 
		$P(\Phi,\chi,\mu)=h_\mu(\Phi)+\lim\limits_{t \rightarrow \infty} \frac{1}{t} \int a_{t} d \mu$ is the pressure of $a$ with respect to $\mu.$ 
	\end{description}
    Furthermore, if $\chi$ is defined as above, then   \eqref{equation-W} and \eqref{equation-AF} hold for $\chi$  since it is affine.  
\end{example}
\begin{remark}\label{remark-AB}
	In Theorem \ref{thm-Almost-Additive} and \ref{thm-Almost-Additive2}, the  expansivity of $\Phi$ is used to guarantee the function $\label{equation-UU}
		\mathcal{M}(\Phi,\Lambda) \ni \mu \mapsto \lim _{t \rightarrow \infty} \frac{1}{t} \int_{\Lambda} a_{t} \mathrm{~d} \mu$ is continuous   and the metric entropy $\mathcal{M}(\Phi,\Lambda) \ni \mu \mapsto h_\mu(\Phi)$ is upper semi-continuous. When we consider almost additive families, by (\ref{equation-P}) we only need to assume that the metric entropy $\mathcal{M}(\Phi,\Lambda) \ni \mu \mapsto h_\mu(\Phi)$ is upper semi-continuous.
\end{remark}

\subsection{Proof of Theorem \ref{thm-Almost-Additive}}
\subsubsection{Some lemmas}
We  establish several auxiliary results.
For any $r\in \mathbb{R},$ denote $r^+=\{s\in\mathbb{R}:s>r\}$ and $r^-=\{s\in\mathbb{R}:s<r\}.$  For any $d \in \mathbb{N},$ $r=\left(r_1, \ldots, r_{d}\right)\in\mathbb{R}^d$ and $\xi=\left(\xi_1, \ldots, \xi_{d}\right)\in\{+,-\}^d$, we define $$r^\xi=\{s=\left(s_1, \ldots, s_{d}\right)\in\mathbb{R}^d:s_i\in r_i^{\xi_i} \text{ for }i=1,2,\cdots,d\}.$$ We denote $F^d=\{\left(\frac{p_1}{q_1}, \ldots, \frac{p_d}{q_d}\right):p_i,q_i\in\mathbb{R}\text{ and }q_i>0 \text{ for any }1\leq i\leq d\}.$
It is easy to check that
\begin{lemma}\label{lemma-E}
	Let $b_i=\frac{p^i}{q^i}\in F^1$ for $i=1,2.$
	\begin{description}
		\item[(1)] If $b_1=b_2,$ then $\frac{\theta p^1+(1-\theta)p^2}{\theta q^1+(1-\theta)q^2}=b_1=b_2$ for any $\theta\in[0,1].$
		\item[(2)] If $b_1\neq b_2,$ then $\frac{\theta p^1+(1-\theta)p^2}{\theta q^1+(1-\theta)q^2}$ is strictly monotonic on $\theta\in[0,1].$
	\end{description}
\end{lemma}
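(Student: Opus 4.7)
The plan is to handle the two parts by direct algebraic manipulation together with one derivative computation; no machinery beyond elementary calculus is required.

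For part (1), I would begin by setting $b := b_1 = b_2$, so that by the definition of $F^1$ one has $p^i = b q^i$ for $i = 1, 2$. Substituting this into the convex combination of numerators gives
\[
\theta p^1 + (1-\theta) p^2 \;=\; b\bigl(\theta q^1 + (1-\theta) q^2\bigr).
\]
Since $q^1, q^2 > 0$ and $\theta \in [0,1]$, the denominator $\theta q^1 + (1-\theta) q^2$ is strictly positive, so dividing yields $b$, as required.

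For part (2), I define the function $f(\theta) := \frac{\theta p^1 + (1-\theta) p^2}{\theta q^1 + (1-\theta) q^2}$ on $[0,1]$. The denominator is again strictly positive throughout, so $f$ is smooth. Applying the quotient rule, the numerator of $f'(\theta)$ should telescope and collapse to the constant $p^1 q^2 - p^2 q^1$, so that
\[
f'(\theta) \;=\; \frac{p^1 q^2 - p^2 q^1}{\bigl(\theta q^1 + (1-\theta) q^2\bigr)^2}.
\]
Now the assumption $b_1 \neq b_2$ is equivalent, under $q^1, q^2 > 0$, to $p^1 q^2 \neq p^2 q^1$; hence $f'$ keeps a fixed nonzero sign on $[0,1]$, so $f$ is strictly monotonic.

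There is essentially no obstacle in this argument: the only detail worth flagging is that the positivity condition $q^i > 0$ built into the definition of $F^1$ is exactly what is needed both to make the denominator of $f$ nonzero on the whole interval $[0,1]$ and to translate the hypothesis $b_1 \neq b_2$ into $p^1 q^2 \neq p^2 q^1$ without sign ambiguity. Consequently the lemma will be a short paragraph of verification rather than a structured proof.
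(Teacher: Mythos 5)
Your proof is correct; the quotient-rule computation giving $f'(\theta) = (p^1 q^2 - p^2 q^1)/(\theta q^1 + (1-\theta) q^2)^2$ is exactly the verification the paper has in mind when it prefaces the lemma with ``It is easy to check that'' and gives no further argument. No discrepancy.
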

We can obtain the following result using mathematical induction.
\begin{lemma}\label{lemma-C}
	Let $d \in \mathbb{N}$ and $a=\left(\frac{p_1}{q_1}, \ldots, \frac{p_d}{q_d}\right) \in F^{d}.$  If $\{b_\xi=\left(\frac{p_1^\xi}{q_1^\xi}, \ldots, \frac{p_d^\xi}{q_d^\xi}\right)\}_{\xi\in\{+,-\}^d}\subseteq F^d$ are $2^d$ numbers satisfies $b_\xi\in a^\xi$ for any $\xi\in \{+,-\}^d,$ then there are $2^d$ numbers $\{\theta_\xi\}_{\xi\in\{+,-\}^d}\subseteq [0,1]$ such that $\sum_{\xi\in\{+,-\}^d}\theta_\xi=1$ and $$ \frac{\sum_{\xi\in\{+,-\}^d}\theta_\xi p^{\xi}_i}{\sum_{\xi\in\{+,-\}^d}\theta_\xi q^{\xi}_i}=\frac{p_i}{q_i}\text{ for any }1\leq i\leq d.$$
\end{lemma}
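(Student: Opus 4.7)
The plan is to proceed by induction on $d$, with Lemma~\ref{lemma-E} and the intermediate value theorem serving as both the base case and the engine of the inductive step. For $d=1$, the hypothesis gives $b_- < a < b_+$ strictly; Lemma~\ref{lemma-E}(2) says $\theta \mapsto \frac{\theta p^+ + (1-\theta)p^-}{\theta q^+ + (1-\theta)q^-}$ is strictly monotonic on $[0,1]$ with endpoint values $b_-$ and $b_+$, so the intermediate value theorem produces the desired $\theta_+ \in (0,1)$ with $\theta_- = 1-\theta_+$.

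Assuming the result for $d-1$, I would pair the $2^d$ points by the value of their last sign. For each $\xi' \in \{+,-\}^{d-1}$, the $d$-th ratio coordinates of $b_{(\xi',+)}$ and $b_{(\xi',-)}$ strictly straddle $p_d/q_d$, so Lemma~\ref{lemma-E}(2) together with the intermediate value theorem produces a $\theta_{\xi'} \in (0,1)$ yielding a new point $c_{\xi'}$ with aggregated numerators $N_i(\xi') = \theta_{\xi'} p_i^{(\xi',+)} + (1-\theta_{\xi'}) p_i^{(\xi',-)}$ and aggregated denominators $D_i(\xi')$ defined analogously, chosen so that $N_d(\xi')/D_d(\xi') = p_d/q_d$ exactly. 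The goal is then to reduce to the $(d-1)$-dimensional problem on the projections $\tilde c_{\xi'} := (N_i(\xi')/D_i(\xi'))_{i=1}^{d-1}$ around $\tilde a := (p_1/q_1,\ldots,p_{d-1}/q_{d-1})$.

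The one substantive check before invoking induction is that each $\tilde c_{\xi'}$ still lies in the sign region $\tilde a^{\xi'}$. This follows from the elementary identity
\begin{equation*}
\frac{\theta p_1 + (1-\theta)p_2}{\theta q_1 + (1-\theta)q_2} - c = \frac{\theta(p_1-cq_1) + (1-\theta)(p_2-cq_2)}{\theta q_1 + (1-\theta)q_2}
\end{equation*}
with $q_1,q_2 > 0$: if both $p_j/q_j$ lie on the same side of $c$, the right side has the matching sign. Thus the first $d-1$ ratio coordinates of $c_{\xi'}$ inherit the sign pattern $\xi'$ from the pair $\{b_{(\xi',+)}, b_{(\xi',-)}\}$, and the inductive hypothesis applied to $\{\tilde c_{\xi'}\}_{\xi' \in \{+,-\}^{d-1}}$ around $\tilde a$ yields weights $\lambda_{\xi'} \in [0,1]$ summing to $1$ that match the first $d-1$ target ratios.

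The final step is to unpack the weights: set $\theta_{(\xi',+)} = \lambda_{\xi'}\theta_{\xi'}$ and $\theta_{(\xi',-)} = \lambda_{\xi'}(1-\theta_{\xi'})$. A direct computation should show that these weights sum to $1$, reproduce the first $d-1$ ratio identities by the choice of $\{\lambda_{\xi'}\}$, and satisfy the $d$-th identity because $N_d(\xi') = (p_d/q_d)D_d(\xi')$ by construction, so the common factor $p_d/q_d$ pulls out of the aggregated sums. The main obstacle I anticipate is the sign-preservation claim in the third paragraph; everything else is bookkeeping around Lemma~\ref{lemma-E}.
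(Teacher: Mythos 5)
Your proof is correct and follows the same inductive route that the paper indicates for this lemma (the text preceding Lemma~\ref{lemma-C} states only that it can be obtained ``using mathematical induction'' via Lemma~\ref{lemma-E}). The base case via Lemma~\ref{lemma-E}(2) and the intermediate value theorem, the pairing by the last sign to collapse one coordinate exactly, the sign-preservation identity for the remaining coordinates, and the final unpacking $\theta_{(\xi',\pm)}=\lambda_{\xi'}\theta_{\xi'}$, $\lambda_{\xi'}(1-\theta_{\xi'})$ all check out.
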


From Lemma \ref{lemma-C} we have
\begin{corollary}\label{corollary-A}
	Suppose that $\Phi=(\phi_t)_{t\in\mathbb{R}}$  is a continuous flow on a compact metric space $(M,d),$ and $\Lambda\subset M$ is a compact invariant set such  that $\Phi$ is expansive on $\Lambda.$  Let $d \in \mathbb{N}$ and $(A, B) \in AA(\Phi,\Lambda)^{d} \times AA(\Phi,\Lambda)^{d}$ 
	satisfying \eqref{equation-UV}. Then for any $\alpha\in \mathrm{Int}(L_{A,B}),$ and $2^d$ invariant measures $\{\mu_\xi\}_{\xi\in\{+,-\}^d}$ with 
	$$\mathcal{P}_{A,B}\left(\mu_\xi\right)\in \alpha^\xi \text{ for any }\xi\in \{+,-\}^d,$$ there are $2^d$ numbers $\{\theta_\xi\}_{\xi\in\{+,-\}^d}\subseteq [0,1]$ such that $\sum_{\xi\in\{+,-\}^d}\theta_\xi=1$ and $\mathcal{P}_{A,B}\left(\sum_{\xi\in\{+,-\}^d}\theta_\xi\mu_\xi\right)\\ = \alpha.$
\end{corollary}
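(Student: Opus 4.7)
The plan is to reduce the statement, coordinate by coordinate, to the algebraic Lemma \ref{lemma-C}. Fix $\alpha\in\mathrm{Int}(L_{A,B})$ and the $2^{d}$ measures $\{\mu_\xi\}_{\xi\in\{+,-\}^{d}}$. For each $\xi$ and each $i\in\{1,\dots,d\}$, set
$$p_i^\xi:=\lim_{t\to\infty}\frac{1}{t}\int a_{t}^{i}\,d\mu_\xi,\qquad q_i^\xi:=\lim_{t\to\infty}\frac{1}{t}\int b_{t}^{i}\,d\mu_\xi.$$
These limits exist by \eqref{equation-UU}, which relies on the expansivity of $\Phi$ on $\Lambda$. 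Since $\mathcal{P}_{A,B}(\mu_\xi)\in\alpha^\xi\subset\mathbb{R}^{d}$, each ratio $p_i^\xi/q_i^\xi$ is a finite real number; together with condition \eqref{equation-UV}, which rules out $q_i^\xi=0$ unless $p_i^\xi\neq 0$ (forcing the ratio to $\pm\infty$), this guarantees $q_i^\xi>0$ for every $i$ and $\xi$.

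Next, write $\alpha_i=\alpha_i/1\in F^{1}$, so that $\alpha=(\alpha_1/1,\dots,\alpha_d/1)\in F^{d}$ is in the form required by Lemma \ref{lemma-C}. Applying Lemma \ref{lemma-C} to $\alpha$ and to the points
$$b_\xi:=\mathcal{P}_{A,B}(\mu_\xi)=(p_1^\xi/q_1^\xi,\dots,p_d^\xi/q_d^\xi)\in\alpha^\xi$$
produces numbers $\{\theta_\xi\}_{\xi\in\{+,-\}^{d}}\subseteq[0,1]$ with $\sum_\xi\theta_\xi=1$ such that
$$\frac{\sum_\xi\theta_\xi\, p_i^\xi}{\sum_\xi\theta_\xi\, q_i^\xi}=\alpha_i\qquad\text{for every }i=1,\dots,d.$$

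It then remains to verify that this is precisely $\mathcal{P}_{A,B}$ of the convex combination $\mu:=\sum_\xi\theta_\xi\mu_\xi$. By linearity of the integral, $\frac{1}{t}\int a_{t}^{i}\,d\mu=\sum_\xi\theta_\xi\cdot\frac{1}{t}\int a_{t}^{i}\,d\mu_\xi$, and passing to the limit yields $\lim_{t\to\infty}t^{-1}\int a_{t}^{i}\,d\mu=\sum_\xi\theta_\xi\, p_i^\xi$; the analogous identity holds for $b_{t}^{i}$, and the resulting denominator $\sum_\xi\theta_\xi q_i^\xi$ is a convex combination of strictly positive numbers with weights summing to $1$, hence strictly positive. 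Therefore
$$\mathcal{P}_{A,B}(\mu)_i=\frac{\sum_\xi\theta_\xi\, p_i^\xi}{\sum_\xi\theta_\xi\, q_i^\xi}=\alpha_i$$
for every $i$, which is the desired conclusion. The only technical subtlety is ensuring the denominators never vanish — so that the ratio of limits really equals the limit of the ratios defining $\mathcal{P}_{A,B}(\mu)$ — and this is handled entirely by condition \eqref{equation-UV} together with the standing hypothesis $\mathcal{P}_{A,B}(\mu_\xi)\in\alpha^\xi\subset\mathbb{R}^{d}$. The corollary therefore follows by a direct application of Lemma \ref{lemma-C}, with no further analytic input required.
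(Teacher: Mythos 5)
Your proof is correct and takes essentially the same approach as the paper, which simply asserts Corollary~\ref{corollary-A} follows from Lemma~\ref{lemma-C}; you have filled in the bookkeeping (identifying $\alpha$ and the $\mathcal{P}_{A,B}(\mu_\xi)$ with elements of $F^d$, justifying $q_i^\xi>0$ via \eqref{equation-UV} and finiteness of $\mathcal{P}_{A,B}(\mu_\xi)$, and passing the limits through the convex combination) that the paper leaves implicit.
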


\begin{lemma}\label{lemma-D}
	Suppose that $\Phi=(\phi_t)_{t\in\mathbb{R}}$  is a continuous flow on a compact metric space $(M,d),$ and $\Lambda\subset M$ is a compact invariant set such  that $\Phi$ is expansive on $\Lambda.$  Let $d \in \mathbb{N}$ and $(A, B) \in AA(\Phi,\Lambda)^{d} \times AA(\Phi,\Lambda)^{d}$ 
	satisfying \eqref{equation-UV}.  Then for any $\alpha\in \mathrm{Int}(L_{A,B}),$ any $\mu\in M_{A,B}(\alpha)$ and any $\eta,\zeta>0$, there are $2^d$ invariant measures $\{\mu_\xi\}_{\xi\in\{+,-\}^d}$ such that { for any }$\xi\in \{+,-\}^d$
	$$\mathcal{P}_{A,B}\left(\mu_\xi\right)\in \alpha^\xi, h_{\mu_\xi}(\Phi)>h_{\mu}(f)-\eta \text{ and } d^*(\mu_\xi,\mu)<\zeta.$$
\end{lemma}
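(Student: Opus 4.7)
The plan is to construct each $\mu_\xi$ as a small convex perturbation of $\mu$ toward an auxiliary measure $\nu_\xi$ whose $\mathcal{P}_{A,B}$-image already lies in the open orthant $\alpha^\xi$. Once $\nu_\xi$ is chosen, the three required properties will follow by simply taking the convex parameter sufficiently small.

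First I would exploit $\alpha\in\mathrm{Int}(L_{A,B})$ to select the auxiliary measures. Pick $r>0$ with $B(\alpha,r)\subseteq L_{A,B}$; since each of the $2^d$ open orthants $\alpha^\xi$ meets this ball, for every $\xi\in\{+,-\}^d$ there is a point $\beta_\xi\in B(\alpha,r)\cap \alpha^\xi$ and, by definition of $L_{A,B}$, a measure $\nu_\xi\in\mathcal{M}(\Phi,\Lambda)$ with $\mathcal{P}_{A,B}(\nu_\xi)=\beta_\xi$. This is the only point at which the interior hypothesis is genuinely used.

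Next, for a parameter $\epsilon\in(0,1]$ to be fixed later, set $\mu_\xi:=(1-\epsilon)\mu+\epsilon\nu_\xi$. Writing $p^\cdot_i=\lim_{t\to\infty}\frac{1}{t}\int a^i_t\,d\cdot$ and $q^\cdot_i=\lim_{t\to\infty}\frac{1}{t}\int b^i_t\,d\cdot$ (these limits exist, and depend affinely and continuously on the measure via \eqref{equation-UU} and expansivity), the $i$-th coordinate of $\mathcal{P}_{A,B}(\mu_\xi)$ equals
\[
\frac{(1-\epsilon)p^\mu_i+\epsilon p^{\nu_\xi}_i}{(1-\epsilon)q^\mu_i+\epsilon q^{\nu_\xi}_i},
\]
whose denominator stays strictly positive for $\epsilon\in[0,1]$ thanks to \eqref{equation-UV}. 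Because $p^\mu_i/q^\mu_i=\alpha_i\neq\beta_{\xi,i}=p^{\nu_\xi}_i/q^{\nu_\xi}_i$, Lemma \ref{lemma-E}(2) asserts that this ratio is strictly monotonic in $\epsilon$, starting at $\alpha_i$ when $\epsilon=0$ and reaching $\beta_{\xi,i}\in\alpha_i^{\xi_i}$ when $\epsilon=1$. Consequently $\mathcal{P}_{A,B}(\mu_\xi)_i$ lies strictly between $\alpha_i$ and $\beta_{\xi,i}$ for every $\epsilon\in(0,1]$, placing $\mathcal{P}_{A,B}(\mu_\xi)$ in $\alpha^\xi$.

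Finally, weak$^*$ continuity of convex combinations yields $d^*(\mu_\xi,\mu)\to 0$ as $\epsilon\to 0$, while the affinity of the metric entropy map gives
\[
h_{\mu_\xi}(\Phi)=(1-\epsilon)h_\mu(\Phi)+\epsilon h_{\nu_\xi}(\Phi)\geq h_\mu(\Phi)-\epsilon\,\htop(\Lambda),
\]
so choosing $\epsilon$ smaller than both $\eta/(\htop(\Lambda)+1)$ and a corresponding $d^*$-threshold produces the required $\mu_\xi$. Running this construction for each of the $2^d$ choices of $\xi$ and taking the minimum of the thresholds delivers the full family. I do not foresee a substantial obstacle: the argument is essentially a coordinate-wise application of Lemma \ref{lemma-E} combined with the affinity of entropy, and \eqref{equation-UV} is needed only to ensure that the ratio defining $\mathcal{P}_{A,B}$ remains well defined along the convex path.
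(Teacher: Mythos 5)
Your proposal is correct and follows essentially the same route as the paper's own proof: pick $\nu_\xi\in\mathcal{M}(\Phi,\Lambda)$ with $\mathcal{P}_{A,B}(\nu_\xi)\in\alpha^\xi$, form the convex combination $\mu_\xi$ with a small weight on $\nu_\xi$, and invoke Lemma~\ref{lemma-E}(2) together with affinity of entropy and weak$^*$ continuity. The extra step of restricting to a small ball $B(\alpha,r)\subseteq L_{A,B}$ is harmless but unnecessary, since only membership of $\mathcal{P}_{A,B}(\nu_\xi)$ in the open orthant $\alpha^\xi$ is used.
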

\begin{proof}
	By $\alpha\in \mathrm{Int}(L_{A,B})$ there is $\nu_\xi\in \mathcal{M}(\Phi,\Lambda)$ such that $\mathcal{P}_{A,B}\left(\nu_\xi\right)\in a^\xi$ for any $\xi\in \{+,-\}^d.$ We choose $\tau_\xi\in(0,1)$ close to $1$ such that $\mu_\xi=\tau_\xi\mu+(1-\tau_\xi)\nu_\xi$ satisfies 
	$$h_{\mu_\xi}(\Phi)>h_{\mu}(\Phi)-\eta \text{ and } d^*(\mu_\xi,\mu)<\zeta \text{ for any }\xi\in \{+,-\}^d.$$ Then we have $\mathcal{P}_{A,B}\left(\mu_\xi\right)\in a^\xi$ by $\tau^\xi>0$ and Lemma \ref{lemma-E}(2).
\end{proof}

\subsubsection{Proof of Theorem \ref{thm-Almost-Additive}}
Fix $\alpha\in \mathrm{Int}(L_{A,B}),$ $\mu_0\in M_{A,B}(\alpha)$ and $\eta,  \zeta>0.$ 
Since $\Phi$ is expansive on $\Lambda,$  the metric entropy $\mathcal{M}(\Phi,\Lambda) \ni \mu \mapsto h_\mu(\Phi)$ is upper semi-continuous.   Hence there is $0<\zeta'<\zeta$ such that  for any $\omega\in \mathcal{M}(\Phi,\Lambda)$ with $d^*(\mu_0,\omega)<\zeta'$ we have
\begin{equation}\label{equation-C}
	h_{\omega}(\Phi)<h_{\mu_0}(\Phi)+\frac{3\eta}{8} \text{ and }|\chi(\omega)-\chi(\mu_0)|<\frac{\eta}{2}.
\end{equation} 
By Lemma \ref{lemma-D} there are $2^d$ invariant measures $\{\mu_\xi\}_{\xi\in\{+,-\}^d}$ such that \text{ for any }$\xi\in \{+,-\}^d$
\begin{equation}\label{equation-U}
	\mathcal{P}_{A,B}\left(\mu_\xi\right)\in \alpha^\xi, h_{\mu_\xi}(\Phi)>h_{\mu_0}(\Phi)-\frac{\eta}{8} \text{ and } d^*(\mu_\xi,\mu_0)<\frac{\zeta'}{2}.
\end{equation}
Since the map $\mathcal{P}_{A,B}$ is continuous,  there is $0<\zeta''<\zeta'$ such that for any $\omega_\xi \in \mathcal{M}(\Phi,\Lambda)$ with $d^*(\omega_\xi,\mu_\xi)<\zeta''$ one has
\begin{equation}\label{equation-B}
	\mathcal{P}_{A,B}\left(\omega_\xi\right)\in \alpha^\xi.
\end{equation} 
For the $2^d$ invariant measures $\{\mu_\xi\}_{\xi\in\{+,-\}^d},$ there are compact invariant subsets $\Lambda_\xi\subseteq\Theta\subsetneq \Lambda$ such that for each $\xi\in \{+,-\}^d$
\begin{description}
	\item[(1)] for any $a\in \mathrm{Int}(\mathcal{P}_{A,B}(\mathcal{M}(\Phi,\Theta)))$ and any $\varepsilon>0,$ there exists an ergodic measure $\mu_a$ supported on $\Theta$ with $\mathcal{P}_{A,B}(\mu_a)=a$
	such that $|h_{\mu_a}(\Phi)-H(\Phi,a,\Theta)|<\varepsilon.$ 
	\item[(2)] $\htop( \Lambda_\xi)>h_{\mu_\xi}(\Phi)-\frac{\eta}{8}.$
	\item[(3)] $d_H(\cov\{\mu_\xi\}_{\xi\in \{+,-\}^d},  \mathcal{M}(\Phi,\Theta))<\frac{\zeta''}{2}$,   $d_H(\mu_\xi,  \mathcal{M}(\Phi,\Lambda_\xi))<\frac{\zeta''}{2}.$
\end{description}
By item(2) and the variational principle, there is $\nu_\xi\in \mathcal{M}(\Phi,\Lambda_\xi)$ such that $$h_{\nu_\xi}(\Phi)>\htop( \Lambda_\xi)-\frac{\eta}{8}>h_{\mu_\xi}(\Phi)-\frac{2\eta}{8}>h_{\mu_0}(\Phi)-\frac{3\eta}{8}.$$
Then by item(3) and \eqref{equation-B}, we have $\mathcal{P}_{A,B}\left(\nu_\xi\right)\in \alpha^\xi.$ 
By Corollary \ref{corollary-A} there is $2^d$ numbers $\{\theta_\xi\}_{\xi\in\{+,-\}^d}\subseteq [0,1]$ such that $\sum_{\xi\in\{+,-\}^d}\theta_\xi=1$ and $\mathcal{P}_{A,B}\left(\nu'\right)= \alpha$ where $\nu'=\sum_{\xi\in\{+,-\}^d}\theta_\xi\nu_\xi.$ 
Then on one hand we have 
\begin{equation}\label{equation-E}
		H(\Phi,a,\Theta)
		\geq h_{\nu'}(\Phi)\geq \min\{h_{\nu_\xi}(\Phi):\xi\in \{+,-\}^d\}
		> h_{\mu_0}(\Phi)-\frac{3\eta}{8}.
\end{equation}
On the other hand, by item(3),  \eqref{equation-U} and \eqref{equation-C} we have 
\begin{equation}\label{equation-F}
	H(\Phi,a,\Theta)<h_{\mu_0}(\Phi)+\frac{3\eta}{8}.
\end{equation}
Now by item(1) there exists an ergodic measure $\nu$ supported on $\Theta$ with $\mathcal{P}_{A,B}(\mu_a)=a$
such that $|h_{\nu}(\Phi)-H(\Phi,a,\Theta)|<\frac{\eta}{8}.$ 
Then $\nu\in M_{A,B}^{erg}(\alpha)$ and by \eqref{equation-E}, \eqref{equation-F} we have $|h_{\nu}(\Phi)-h_{\mu_0}(\Phi)|<\frac{\eta}{2}.$ By item(3) and  \eqref{equation-U} we have $d^*(\nu,\mu_0)<\zeta'<\zeta.$ Finally, by \eqref{equation-C} we have $|\chi(\nu)-\chi(\mu_0)|<\frac{\eta}{2},$ and thus $|P(\Phi,\chi,\nu)-P(\Phi,\chi,\mu_0)|<\eta.$ \qed

\subsection{Proof of Theorem \ref{thm-Almost-Additive2}}\label{section-proof}

\subsubsection{Some lemmas}
\begin{lemma}\label{lemma-B}
	Suppose that $\Phi=(\phi_t)_{t\in\mathbb{R}}$  is a continuous flow on a compact metric space $(M,d),$ and $\Lambda\subset M$ is a compact invariant set. Let $V$ be a convex subset of $\mathcal{M}(\Phi,\Lambda).$ If there is an invariant measure $\mu_V\in V$ with $S_{\mu_V}=\Lambda,$ then $\{\mu\in V:S_\mu=\Lambda\}$ is residual in $V.$ 
\end{lemma}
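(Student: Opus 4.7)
The plan is to realize $\{\mu\in V:S_\mu=\Lambda\}$ as a countable intersection of open dense subsets of $V$, which is the textbook route to residuality. The key observation is that having full support is a countable condition: fix any countable basis $\{U_n\}_{n\in\mathbb{N}}$ of open sets for the topology of $\Lambda$ (which exists since $\Lambda$ is compact metric), and note that $S_\mu=\Lambda$ is equivalent to $\mu(U_n)>0$ for every $n\in\mathbb{N}$. Set
\[
G_n=\{\mu\in V:\mu(U_n)>0\}, \qquad\text{so that}\qquad \{\mu\in V:S_\mu=\Lambda\}=\bigcap_{n\in\mathbb{N}}G_n.
\]

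The first step is to check that each $G_n$ is open in $V$. This is immediate from the standard fact that for any open set $U\subseteq\Lambda$ the map $\mu\mapsto\mu(U)$ is lower semi-continuous on $\mathcal{M}(\Phi,\Lambda)$ in the weak$^*$-topology, so the sublevel set $\{\mu(U_n)=0\}$ is closed.

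The second step, where the hypothesis is used, is to show density of each $G_n$ in $V$. Given $\mu\in V$ and $\varepsilon\in(0,1)$, convexity of $V$ gives $\mu_\varepsilon:=(1-\varepsilon)\mu+\varepsilon\mu_V\in V$, and since $S_{\mu_V}=\Lambda$ one has $\mu_V(U_n)>0$, hence
\[
\mu_\varepsilon(U_n)\geq \varepsilon\,\mu_V(U_n)>0,
\]
so $\mu_\varepsilon\in G_n$. As $\mu_\varepsilon\to\mu$ in the weak$^*$ topology when $\varepsilon\to 0$, this proves $G_n$ is dense in $V$.

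Combining these two steps, $\{\mu\in V:S_\mu=\Lambda\}$ is a countable intersection of open dense subsets of $V$, hence residual in $V$ by definition. There is no real obstacle here; the only subtlety worth noting is that the argument uses only the convexity of $V$ and the existence of the single fully supported witness $\mu_V$, so the conclusion holds without any closedness or Baire hypothesis on $V$ itself (residuality is defined purely in terms of the meagerness of the complement, and becomes genuinely useful once $V$ is, say, closed in $\mathcal{M}(\Phi,\Lambda)$, as will be the case in all intended applications).
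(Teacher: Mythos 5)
Your proof is correct. The route is essentially the same as the paper's in its two core ingredients (the $G_\delta$ structure of the full-support set, and convexity with the single witness $\mu_V$ to get density), but you make it self-contained where the paper cites externally. The paper invokes \cite[Proposition 21.11]{DGS} for the fact that $\{\mu\in\mathcal{M}(\Phi,\Lambda):S_\mu=\Lambda\}$ is either empty or a dense $G_\delta$ in the ambient space, then intersects with $V$ and repeats the convexity argument to get density inside $V$; you instead unpack that citation directly, fixing a countable basis $\{U_n\}$ and realizing the set as $\bigcap_n G_n$ with $G_n=\{\mu\in V:\mu(U_n)>0\}$, proving openness from lower semi-continuity of $\mu\mapsto\mu(U)$ and density of each $G_n$ from the convex combination $\mu_\varepsilon=(1-\varepsilon)\mu+\varepsilon\mu_V$. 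What your version buys is independence from the reference and a cleaner bookkeeping of what is actually needed (no Baire hypothesis on $V$, just that residual means ``complement is meager''); what the paper's version buys is brevity. Your closing remark about why no closedness or Baire assumption on $V$ is required is accurate and a useful clarification that the paper leaves implicit.
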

\begin{proof}
	Since $\{\mu\in \mathcal{M}(\Phi,\Lambda):S_\mu=\Lambda\}$ is either empty or a dense $G_\delta$ subset of   $\mathcal{M}(\Phi,\Lambda)$ from \cite[Proposition 21.11]{DGS}. So if there is an invariant measure $\mu_V\in V$ with $S_{\mu_V}=\Lambda,$ then $\{\mu\in \mathcal{M}(\Phi,\Lambda):S_\mu=\Lambda\}$ is a dense $G_\delta$ subset of $\mathcal{M}(\Phi,M).$ Thus $\{\mu\in V:S_\mu=\Lambda\}$ is a $G_\delta$ subset of  $V.$ In addition, for any $\nu\in V$ and $\theta\in(0,1),$ we have $\nu_\theta=\theta\nu+(1-\theta)\mu_V\in V$ and $S_{\nu_\theta}=\Lambda.$ So $\{\mu\in V:S_\mu=\Lambda\}$ is dense in  $V,$ and thus is residual in $V.$ 
\end{proof}
\begin{lemma}\label{lemma-A}
	Suppose that $\Phi=(\phi_t)_{t\in\mathbb{R}}$  is a continuous flow on a compact metric space $(M,d),$ and $\Lambda\subset M$ is a compact invariant set such  that $\Phi$ is expansive on $\Lambda.$  Let $d \in \mathbb{N}$ and $(A, B) \in AA(\Phi,\Lambda)^{d} \times AA(\Phi,\Lambda)^{d}$ 
	satisfying \eqref{equation-UV}.   If  $\chi:\mathcal{M}(\Phi,\Lambda)\to \mathbb{R}$ is a continuous function  satisfies \eqref{equation-W} and \eqref{equation-AF}, then for any $\alpha\in \mathrm{Int}(L_{A,B})$ and any $\max_{\mu\in M_{A,B}(\alpha)}\chi(\mu)\leq c< H_{A,B}(\Phi,\chi,\alpha),$ the following properties hold:
	\begin{description}
		\item[(1)] If $\{\mu\in M_{A,B}^{erg}(\alpha):P(\Phi,\chi,\mu)\geq c\}$ is dense in $\{\mu\in M_{A,B}(\alpha):P(\Phi,\chi,\mu)\geq c\},$ then  $\{\mu\in M_{A,B}^{erg}(\alpha):P(\Phi,\chi,\mu)\geq c\}$ is residual in $\{\mu\in M_{A,B}(\alpha):P(\Phi,\chi,\mu)\geq c\}.$
		\item[(2)] If there is an invariant measure $\tilde{\mu}$ with $S_{\tilde{\mu}}=\Lambda$, then  $\{\mu\in M_{A,B}(\alpha):P(\Phi,\chi,\mu)\geq c,\ S_\mu=\Lambda\}$ is residual in $\{\mu\in M_{A,B}(\alpha):P(\Phi,\chi,\mu)\geq c\}.$   
		\item[(3)] If $\{\mu\in \mathcal{M}(\Phi,\Lambda):h_{\mu}(\Phi)=0\}$ is dense in $\mathcal{M}(\Phi,\Lambda),$ then   $\{\mu\in M_{A,B}(\alpha):P(\Phi,\chi,\mu)= c\}$ is residual in $\{\mu\in M_{A,B}(\alpha):P(\Phi,\chi,\mu)\geq c\}.$  
	\end{description}
\end{lemma}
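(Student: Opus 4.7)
The proof strategy is to apply the Baire category theorem to the closed (and hence Baire) subspace $E := \{\mu \in M_{A,B}(\alpha) : P(\Phi, \chi, \mu) \geq c\}$ of $\mathcal{M}(\Phi, \Lambda)$. Two structural facts are key: $M_{A,B}(\alpha)$ is convex, because the functional $\mu \mapsto \lim_{t \to \infty} \frac{1}{t}\int a_t^i\, d\mu - \alpha_i \lim_{t \to \infty} \frac{1}{t}\int b_t^i\, d\mu$ is continuous (by \eqref{equation-UU}) and affine on $\mathcal{M}(\Phi,\Lambda)$ for each $i$; and $P$ is upper semi-continuous on $\mathcal{M}(\Phi,\Lambda)$, since $\chi$ is continuous and the expansivity of $\Phi$ on $\Lambda$ forces $h_\mu(\Phi)$ to be upper semi-continuous. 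For each part I produce a dense $G_\delta$ subset of $E$ contained in the target set and conclude by Baire. A ubiquitous technical tool will be that $P = h_\mu(\Phi) + \chi(\mu)$ is \emph{continuous along line segments} in $\mathcal{M}(\Phi,\Lambda)$, since $h_\mu$ is affine and $\chi$ is globally continuous; this is what lets me run one-parameter interpolation arguments despite only having upper semi-continuity globally.

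Part (1) is immediate: $\mathcal{M}_{erg}(\Phi,\Lambda)$ is a standard $G_\delta$ in $\mathcal{M}(\Phi,\Lambda)$, so $M_{A,B}^{erg}(\alpha) \cap E$ is $G_\delta$ in $E$; combined with the hypothesis of density in $E$, Baire gives residuality. Part (2) uses that $\{\mu: S_\mu = \Lambda\}$ is $G_\delta$ in $\mathcal{M}(\Phi,\Lambda)$ by \cite[Proposition 21.11]{DGS}. For density in $E$, I first manufacture a fully supported $\eta \in M_{A,B}(\alpha)$ with $P(\eta) > c$: pick $\nu_\xi$ with $\mathcal{P}_{A,B}(\nu_\xi) \in \alpha^\xi$ for each $\xi \in \{+,-\}^d$ (possible since $\alpha \in \mathrm{Int}(L_{A,B})$), replace each by $(1-s)\nu_\xi + s\tilde\mu$ with $s>0$ small (so $\mathcal{P}_{A,B}$ stays in $\alpha^\xi$ by continuity, and the new measure is fully supported), and apply Corollary \ref{corollary-A} to obtain a convex combination $\eta_0 \in M_{A,B}(\alpha)$, which is fully supported. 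Since $c < H_{A,B}(\Phi,\chi,\alpha)$ there is $\mu^* \in M_{A,B}(\alpha)$ with $P(\mu^*) > c$, and a small mixture $\eta := (1-s')\mu^* + s'\eta_0$ is fully supported in $M_{A,B}(\alpha)$ with $P(\eta) > c$ by continuity of $P$ along the segment. Then for $\nu \in E$ and a convex open neighborhood $U$ of $\nu$ in $E$, the segment $\nu_\theta := (1-\theta)\nu + \theta \eta$ lies in $U \cap M_{A,B}(\alpha)$ and is fully supported for $\theta > 0$; one checks that small $\theta$ gives $P(\nu_\theta) \geq c$, using the monotonicity/constancy of $\chi$ along segments from \eqref{equation-W} and \eqref{equation-AF} and the excess $P(\eta) - c > 0$. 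The main obstacle is the boundary case $P(\nu) = c$, where one must combine the interpolation toward $\eta$ with a preliminary interpolation toward $\mu^*$ to guarantee $P(\nu_\theta) \geq c$ for an open interval of small $\theta$.

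For Part (3), observe that $\{P = c\} \cap E = \{P \leq c\} \cap E$ (since $E \subseteq \{P \geq c\}$) and $\{P \leq c\} = \bigcap_n \{P < c + 1/n\}$ is $G_\delta$ by upper semi-continuity of $P$, whence $\{P = c\} \cap E$ is $G_\delta$ in $E$. For density I first establish that $\{h_\mu = 0\} \cap M_{A,B}(\alpha)$ is dense in $M_{A,B}(\alpha)$: given $\mu_0 \in M_{A,B}(\alpha)$ and $\delta > 0$, pick a nearby zero-entropy $\tilde\mu_0$ (by the global density hypothesis), and then correct $\mathcal{P}_{A,B}(\tilde\mu_0)$ back to $\alpha$ by adding a small convex contribution of zero-entropy measures $\nu_\xi$ with $\mathcal{P}_{A,B}(\nu_\xi) \in \alpha^\xi$ (obtained from $\alpha \in \mathrm{Int}(L_{A,B})$ and the density hypothesis), the combination being chosen via Corollary \ref{corollary-A}; the perturbation can be made arbitrarily small since $\mathcal{P}_{A,B}(\tilde\mu_0)$ is close to $\alpha$. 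Given $\nu \in E$ and a convex open neighborhood $U$ of $\nu$ in $E$, I now take $\tilde\mu \in U \cap M_{A,B}(\alpha)$ with $h_{\tilde\mu} = 0$, so $P(\tilde\mu) = \chi(\tilde\mu) \leq \max_{M_{A,B}(\alpha)} \chi \leq c$. Along the segment $\nu_t := (1-t)\nu + t\tilde\mu$ (which stays in $U \cap M_{A,B}(\alpha)$ by convexity), $P$ is continuous with $P(\nu_0) \geq c \geq P(\nu_1)$, so the intermediate value theorem produces $t^* \in [0,1]$ with $P(\nu_{t^*}) = c$; the point $\nu_{t^*} \in U \cap E \cap \{P = c\}$ is the witness for density, completing the proof.
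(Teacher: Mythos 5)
Your proof follows essentially the same route as the paper's: $G_\delta$-ness via \cite[Props.\ 5.7, 21.11]{DGS} and upper semi-continuity of $P$, convexity of $M_{A,B}(\alpha)$ via Lemma~\ref{lemma-E}, manufacturing measures in all $2^d$ octants $\alpha^\xi$ via Lemma~\ref{lemma-D}/Corollary~\ref{corollary-A}, and one-parameter interpolation along segments using continuity of $h_\mu+\chi$. Two spots where the paper is a bit tighter are worth noting. In~(2), the ``main obstacle'' you flag at the boundary case $P(\nu)=c$ is dissolved in the paper by observing that $\{\mu\in M_{A,B}(\alpha):P(\Phi,\chi,\mu)\geq c\}$ is itself convex (via \eqref{equation-W}, \eqref{equation-AF}, Lemma~\ref{lemma-E}), so once a fully supported $\mu'$ with $P(\mu')>c$ is produced, Lemma~\ref{lemma-B} applies directly — there is no need for an ad hoc preliminary interpolation, and the segment $\nu_\theta$ stays in $E$ automatically. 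In~(3), you write ``take $\tilde\mu\in U\cap M_{A,B}(\alpha)$ with $h_{\tilde\mu}=0$'' for $U$ open \emph{in} $E$: but $U\subseteq E$ forces $P(\tilde\mu)\geq c$, whereas your next line assumes $P(\tilde\mu)\leq c$; you should instead take $\tilde\mu$ in a convex open set $W$ of $M_{A,B}(\alpha)$ with $W\cap E\subseteq U$ (the paper avoids this entirely by working with metric balls around a fixed $\mu_0$ and constructing $\nu'$ near $\mu_0$ with $h_{\nu'}=0$, $\mathcal{P}_{A,B}(\nu')=\alpha$, directly from Lemma~\ref{lemma-D}). These are fixable wrinkles, not gaps; the ideas match.
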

\begin{proof}
	(1) From \cite[Proposition 5.7]{DGS}, $\mathcal{M}_{erg}(\Phi,\Lambda)$ is a $G_\delta$ subset of $\mathcal{M}(\Phi,\Lambda).$ Then $\{\mu\in M_{A,B}^{erg}(\alpha):P(\Phi,\chi,\mu)\geq c\}$ is a $G_\delta$ subset of $\{\mu\in M_{A,B}(\alpha):P(\Phi,\chi,\mu)\geq c\}.$ If $\{\mu\in M_{A,B}^{erg}(\alpha):P(\Phi,\chi,\mu)\geq c\}$ is dense in $\{\mu\in M_{A,B}(\alpha):P(\Phi,\chi,\mu)\geq c\},$ then  $\{\mu\in M_{A,B}^{erg}(\alpha):P(\Phi,\chi,\mu)\geq c\}$ is residual in $\{\mu\in M_{A,B}(\alpha):P(\Phi,\chi,\mu)\geq c\}.$
	
	(2) Since $\{\mu\in \mathcal{M}(\Phi,\Lambda):S_\mu=\Lambda\}$ is either empty or a dense $G_\delta$ subset of   $\mathcal{M}(\Phi,\Lambda)$ from \cite[Proposition 21.11]{DGS}. So if there is an invariant measure $\tilde{\mu}$ with $S_{\tilde{\mu}}=\Lambda$, then $\{\mu\in \mathcal{M}(\Phi,\Lambda):S_\mu=\Lambda\}$ is a dense $G_\delta$ subset of $\mathcal{M}(\Phi,\Lambda).$  
	
	Now we show that $\{\mu\in M_{A,B}(\alpha):P(\Phi,\chi,\mu)\geq c,\ S_\mu=\Lambda\}$ is nonempty. 
	By Lemma \ref{lemma-D} there exist $2^d$ invariant measures $\{\mu_\xi\}_{\xi\in\{+,-\}^d}$ such that 
	$$\mathcal{P}_{A,B}\left(\mu_\xi\right)\in a^\xi \text{ for any }\xi\in \{+,-\}^d.$$
	Since $\{\mu\in \mathcal{M}(\Phi,\Lambda):S_\mu=\Lambda\}$ is dense in $ \mathcal{M}(\Phi,\Lambda)$ and the map $\mathcal{P}_{A,B}$ is continuous, then there exists $\omega_\xi\in  \mathcal{M}(\Phi,\Lambda)$ close to $\mu_\xi$ such that 
	\begin{equation*}
		\mathcal{P}_{A,B}\left(\omega_\xi\right)\in a^\xi,\ S_{\omega_\xi}=\Lambda\text{ for any }\xi\in \{+,-\}^d.
	\end{equation*}
	By Corollary \ref{corollary-A} there is $2^d$ numbers $\{\theta_\xi\}_{\xi\in\{+,-\}^d}\subseteq [0,1]$ such that $\sum_{\xi\in\{+,-\}^d}\theta_\xi=1$ and $\mathcal{P}_{A,B}\left(\omega\right)= \alpha$ where $\omega=\sum_{\xi\in\{+,-\}^d}\theta_\xi\mu_\omega.$ Then $S_{\omega}=\Lambda.$ Since $c< H_{A,B}(\Phi,\chi,a),$ there is $\nu\in M_{A,B}(\alpha)$ such that $P(\Phi,\chi,\nu)>c.$
	By \eqref{equation-W} we can choose $\theta\in(0,1)$ close to $1$ such that $\mu'=\theta\nu+(1-\theta)\omega$ satisfies $P(\Phi,\chi,\mu')>c.$ 
	Then $\mu'\in \{\mu\in M_{A,B}(\alpha):P(\Phi,\chi,\mu)\geq c,\ S_\mu=\Lambda\}.$ Note that $\{\mu\in M_{A,B}(\alpha):P(\Phi,\chi,\mu)\geq c\}$ is a convex set by \eqref{equation-W}, \eqref{equation-AF} and Lemma  \ref{lemma-E}. So by Lemma \ref{lemma-B} we complete the proof of item(2).
	
	(3) Fix $\mu_0\in M_{A,B}(\alpha)$ with $P(\Phi,\chi,\mu_0)\geq c$  and $\zeta>0.$ By Lemma \ref{lemma-D} there exist $2^d$ invariant measures $\{\mu_\xi\}_{\xi\in\{+,-\}^d}$ such that 
	\begin{equation}\label{equation-X}
		\mathcal{P}_{A,B}\left(\mu_\xi\right)\in a^\xi \text{ and } d^*(\mu_\xi,\mu_0)<\frac{\zeta}{2} \text{ for any }\xi\in \{+,-\}^d.
	\end{equation}
	Since $\{\mu\in \mathcal{M}(\Phi,\Lambda):h_{\mu}(\Phi)=0\}$ is dense in $\mathcal{M}(\Phi,\Lambda)$ and  the function $\mathcal{P}_{A,B}$ is continuous,  then there exist $\nu_\xi\in \mathcal{M}(\Phi,\Lambda)$ close to $\mu_\xi$ such that 
	\begin{equation}\label{equation-Y}
		\mathcal{P}_{A,B}\left(\nu_\xi\right)\in a^\xi,\ h_{\nu_\xi}(\Phi)=0 \text{ and } d^*(\nu_\xi,\mu_\xi)<\frac{\zeta}{2} \text{ for each } \xi\in \{+,-\}^d.
	\end{equation}
	By Corollary \ref{corollary-A} there are $2^d$ numbers $\{\theta_\xi\}_{\xi\in\{+,-\}^d}\subseteq [0,1]$ such that $\sum_{\xi\in\{+,-\}^d}\theta_\xi=1$ and $\mathcal{P}_{A,B}\left(\nu'\right)= \alpha$ where $\nu'=\sum_{\xi\in\{+,-\}^d}\theta_\xi\nu_\xi.$ 
	Then by \eqref{equation-Y} $h_{\nu'}(\Phi)=0.$ By \eqref{equation-X} and \eqref{equation-Y} we have $d^*(\nu',\mu_0)<\zeta.$ 
	Now by \eqref{equation-W} we choose $\theta\in[0,1]$ such that  $\nu=\theta\mu_0+(1-\theta)\nu'$ satisfies $P(\Phi,\chi,\nu)=c.$ Then by Lemma \ref{lemma-E}(1)
	\begin{equation}\label{equation-Z}
		\mathcal{P}_{A,B}(\nu)=\alpha \text{ and } d^*(\nu,\mu_0)<\zeta.
	\end{equation}
	So $\{\mu\in M_{A,B}(\alpha):P(\Phi,\chi,\mu)= c\}$ is dense in $\{\mu\in M_{A,B}(\alpha):P(\Phi,\chi,\mu)\geq c\}.$  Since $\Phi$ is expansive on $\Lambda,$  the metric entropy $\mathcal{M}(\Phi,\Lambda) \ni \mu \mapsto h_\mu(\Phi)$ is upper semi-continuous.   Hence $\{\mu\in \mathcal{M}(\Phi,\Lambda):P(\Phi,\chi,\mu)\in[c,c+\frac{1}{n})\}$ is open in $\{\mu\in \mathcal{M}(\Phi,\Lambda):P(\Phi,\chi,\mu)\geq c\}$ for any $n\in\mathbb{N^{+}}.$  Then 
	\begin{equation*}
		\{\mu\in M_{A,B}(a):P(\Phi,\chi,\mu)\in[c,c+\frac{1}{n})\} \text{ is open and dense in }\{\mu\in M_{A,B}(a):P(\Phi,\chi,\mu)\geq c\},
	\end{equation*} 
	and thus 
	$\{\mu\in M_{A,B}(a):P(\Phi,\chi,\mu)=c\}$ is residual in $\{\mu\in M_{A,B}(a):P(\Phi,\chi,\mu)\geq c\}.$
\end{proof}

\subsubsection{Proof of Theorem \ref{thm-Almost-Additive2}}
\qquad (1) Fix $\alpha\in \mathrm{Int}(L_{A,B}),$ $\mu_0\in M_{A,B}(\alpha),$ $\max_{\mu\in M_{A,B}(\alpha)}\alpha(\mu)\leq c\leq P(\Phi,\chi,\mu_0)$  and $\eta,  \zeta>0.$ By Lemma \ref{lemma-A}(3), there exists $\nu'\in M_{A,B}(\alpha)$ such that $P(\Phi,\chi,\nu')=c$ and $d^*(\nu',\mu_0)<\frac{\zeta}{2}.$
For the $\alpha\in \mathrm{Int}(L_{A,B}),$ $\nu'\in M_{A,B}(\alpha)$ and $\eta,  \frac{\zeta}{2}>0,$ there is $\nu\in M_{A,B}^{erg}(\alpha)$ such that $d^*(\nu,\nu')<\frac{\zeta}{2}$ and $|P(\Phi,\chi,\nu)-P(\Phi,\chi,\nu')|<\eta.$ Then we complete the proof of item(1).

(2) Fix $\alpha\in \mathrm{Int}(L_{A,B})$ and $\max_{\mu\in M_{A,B}(\alpha)}\chi(\mu)\leq c<H_{A,B}(\Phi,\chi,\alpha).$ First we show that
\begin{equation}\label{equation-K}
	\{\mu\in M_{A,B}^{erg}(\alpha):P(f,\alpha,\mu)\geq c\} \text{ is dense in }\{\mu\in M_{A,B}(\alpha):P(f,\alpha,\mu)\geq c\}.
\end{equation} 
Let $\mu_0\in M_{A,B}(\alpha)$ be an invariant measure with $P(\Phi,\chi,\mu_0) \geq c$ and $\zeta>0$. If $P(\Phi,\chi,\mu_0)>c$, then there is  $\eta>0$ such that $c<c+\eta<P(\Phi,\chi,\mu_0).$ For the $\alpha\in \mathrm{Int}(L_{A,B}),$ $\mu_0\in M_{A,B}(\alpha)$ and $\eta,  \zeta>0,$ there exists an ergodic measure $\nu\in M_{A,B}^{erg}(\alpha)$ such that $d^*(\nu,\mu_0)<\zeta$ and $|P(\Phi,\chi,\nu)-P(\Phi,\chi,\mu_0)|<\eta.$ 
If $P(\Phi,\chi,\mu_0)=c$, then we can pick an invariant measure $\mu'\in M_{A,B}(\alpha)$ such that $c<P(\Phi,\chi,\mu') \leq  H_{A,B}(\Phi,\chi,a)$, and next pick a sufficiently small number $\theta \in(0,1)$ such that $d^*\left(\mu_0, \mu''\right)<\zeta / 2$, where $\mu''=(1-\theta) \mu_0+\theta\mu'.$ By \eqref{equation-W} we have $P(\Phi,\chi,\mu'')>c.$ By the same argument, there exists an ergodic measure $\nu\in M_{A,B}^{erg}(\alpha)$ such that $d^*(\nu,\mu'')<\zeta/2$ and $P(\Phi,\chi,\nu)> c.$  So $d^*(\nu,\mu_0)<\zeta.$

By \eqref{equation-K} and Lemma \ref{lemma-A}(1),
\begin{equation}\label{equation-M}
	\{\mu\in M_{A,B}^{erg}(\alpha):P(\Phi,\chi,\mu)\geq c\}\text{ is residual in }\{\mu\in M_{A,B}(\alpha):P(\Phi,\chi,\mu)\geq c\}.
\end{equation} 
By Lemma \ref{lemma-A}(3)
\begin{equation}\label{equation-L}
	\{\mu\in M_{A,B}(\alpha):P(\Phi,\chi,\mu)=c\} \text{ is residual in }\{\mu\in M_{A,B}(\alpha):P(\Phi,\chi,\mu)\geq c\}.
\end{equation}
If there is an invariant measure $\tilde{\mu}$ with $S_{\tilde{\mu}}=\Lambda$, then by Lemma \ref{lemma-A}(2) we have
\begin{equation}\label{equation-N}
	\{\mu\in M_{A,B}(\alpha):P(\Phi,\chi,\mu)\geq c,\ S_\mu=\Lambda\}\text{ is residual in }\{\mu\in M_{A,B}(\alpha):P(\Phi,\chi,\mu)\geq c\}
\end{equation}    
So by  \eqref{equation-M}, \eqref{equation-L} and \eqref{equation-N}, we complete the proof of item(2). 

(3) Fix $\alpha\in \mathrm{Int}(L_{A,B})$ and $\mu_0\in M_{A,B}(\alpha)$ with $\max\limits_{\mu\in M_{A,B}(\alpha)}\chi(\mu)\leq P(\Phi,\chi,\mu_0)< H_{A,B}(\Phi,\chi,\alpha).$ 
Then by item(2) the set $\{\mu\in M_{A,B}^{erg}(\alpha):P(\Phi,\chi,\mu)=P(\Phi,\chi,\mu_0)\}$ is residual in $\{\mu\in M_{A,B}(\alpha):P(\Phi,\chi,\mu)\geq P(\Phi,\chi,\mu_0)\}.$ In particular, there is $\mu_\alpha\in M_{A,B}^{erg}(\alpha)$ so that $P(\Phi,\chi,\mu_\alpha)=P(\Phi,\chi,\mu_0).$ \qed

\section{Proof of Theorem  \ref{Thm:basic-set1} and \ref{Thm:basic-set2}}\label{section-thm}
Now  we use ’multi-horseshoe’ dense property and the results of asymptotically  additive families  obtained in Section  \ref{Section:ED-HS-approximation} and \ref{Almost Additive} to give a more general result than Theorem  \ref{Thm:basic-set1} and \ref{Thm:basic-set2}. 
\begin{theorem}\label{thm-almost2}
	Let $X\in\mathscr{X}^1(M)$ and $\Lambda$ be a basic set.  Let $\chi:\mathcal{M}(\Phi,\Lambda)\to \mathbb{R}$ be a continuous function satisfying \eqref{equation-W} and \eqref{equation-AF}.  Let $d \in \mathbb{N}$ and $(A, B) \in AA(\Phi,\Lambda)^{d} \times AA(\Phi,\Lambda)^{d}$ satisfying \eqref{equation-UV}.
	Then:
	\begin{description}
		\item[(I)] For any $\alpha\in \mathrm{Int}(L_{A,B}),$ any $\mu_0\in M_{A,B}(\alpha),$ any $\max_{\mu\in M_{A,B}(\alpha)}\chi(\mu)\leq c\leq P(\phi,\chi,\mu_0)$ and any $\eta,  \zeta>0$, there is $\nu\in M_{A,B}^{erg}(\alpha)$ such that $d^*(\nu,\mu_0)<\zeta$ and $|P(\Phi,\chi,\nu)-c|<\eta.$ 
		\item[(II)] For any $\alpha\in \mathrm{Int}(L_{A,B})$ and $\max_{\mu\in M_{A,B}(\alpha)}\chi(\mu)\leq c< H_{A,B}(\Phi,\chi,\alpha),$ the set $\{\mu\in M_{A,B}^{erg}(\alpha):P(\Phi,\chi,\mu)=c,\ S_\mu=\Lambda\}$ is residual in $\{\mu\in M_{A,B}(\alpha):P(\Phi,\chi,\mu)\geq c\}.$
		\item[(III)] $\{(\mathcal{P}_{A,B}(\mu), P(\Phi,\chi,\mu)):\mu\in \mathcal{M}(\Phi,\Lambda),\ \alpha=\mathcal{P}_{A,B}(\mu)\in\mathrm{Int}(L_{A,B}),\ \max_{\mu\in M_{A,B}(\alpha)}\chi(\mu)\leq P(\Phi,\chi,\mu)< H_{A,B}(\Phi,\chi,\alpha)\}$ coincides with $\{(\mathcal{P}_{A,B}(\mu), P(\Phi,\chi,\mu)):\mu\in \mathcal{M}_{erg}(\Phi,\Lambda),\ \alpha=\mathcal{P}_{A,B}(\mu)\in\mathrm{Int}(L_{A,B}), \max_{\mu\in M_{A,B}(\alpha)}\chi(\mu)\leq P(\Phi,\chi,\mu)< H_{A,B}(\Phi,\chi,\alpha)\}.$  
	\end{description}
    If further $(A, B) \in A(\Phi,\Lambda)^{d} \times A(\Phi,\Lambda)^{d}$ such that $B$ satisfies \eqref{equation-O}, $a^i, b^i$  has bounded variation and  $\sup _{t \in[0, s]}\left\|a^i_{t}\right\|_{\infty}<\infty,$ $\sup _{t \in[0, s]}\left\|b^i_{t}\right\|_{\infty}<\infty$ for some $s>0$ and for any $1\leq i\leq d.$ Then:
    \begin{description}
    	\item[(IV)] The following two set are equal $$\{(\mathcal{P}_{A,B}(\mu), h_{\mu}(\Phi)):\mu\in \mathcal{M}(\Phi,\Lambda),\ \mathcal{P}_{A,B}(\mu)\in\mathrm{Int}(L_{A,B})\},$$ $$\{(\mathcal{P}_{A,B}(\mu), h_\mu(\Phi)):\mu\in \mathcal{M}_{erg}(\Phi,\Lambda),\ \mathcal{P}_{A,B}(\mu)\in\mathrm{Int}(L_{A,B})\}.$$
    \end{description} 
\end{theorem}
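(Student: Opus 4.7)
The plan is to verify the abstract hypotheses of Theorems~\ref{thm-Almost-Additive} and~\ref{thm-Almost-Additive2} for basic sets, then read off (I)--(III), and handle (IV) separately via the almost additive conditional variational principle of Barreira--Holanda.

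For the multi-horseshoe approximation hypothesis of Theorem~\ref{thm-Almost-Additive}, I would apply Theorem~\ref{Mainlemma-convex-by-horseshoe5} to a carefully chosen convex combination $F = \cov\{\mu_i\}_{i=1}^m$. Since $\Lambda$ is hyperbolic and hence expansive, the entropy map is upper semi-continuous and $\mathcal{P}_{A,B}$ is continuous (Remark~\ref{remark-AB}). Theorem~\ref{Mainlemma-convex-by-horseshoe5} produces horseshoes $\Lambda_i \subseteq \Theta \subsetneq \Lambda$ satisfying items (2) and (3) of the hypothesis. For item (1), the restricted families $A|_\Theta, B|_\Theta$ remain asymptotically additive and still satisfy \eqref{equation-UV} because $\mathcal{M}(\Phi,\Theta) \subseteq \mathcal{M}(\Phi,\Lambda)$; moreover $\Theta$ is itself a horseshoe, so Theorem~\ref{condition-principle} applies and yields, for each $a \in \mathrm{Int}(\mathcal{P}_{A,B}(\mathcal{M}(\Phi,\Theta)))$ and each $\varepsilon > 0$, an ergodic measure $\mu_a \in \mathcal{M}(\Phi,\Theta)$ with $\mathcal{P}_{A,B}(\mu_a) = a$ and $|h_{\mu_a}(\Phi) - H(\Phi,a,\Theta)| < \varepsilon$.

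To deduce (I)--(III) from Theorem~\ref{thm-Almost-Additive2} I need two further ingredients on a basic set: density of zero-entropy invariant measures and existence of a fully supported invariant measure. Both follow from the symbolic description in Section~\ref{section-basic set}: periodic measures of the flow are dense in $\mathcal{M}(\Phi,\Lambda)$ (via Sigmund's theorem on the underlying transitive subshift of finite type, transported through the finite-to-one semiconjugacy $\pi$) and have zero entropy; transitivity of $\Lambda$ provides a fully supported invariant measure, for instance the measure of maximal entropy. Parts (I), (II), (III) then follow directly from items (1), (2), (3) of Theorem~\ref{thm-Almost-Additive2}. Part (IV) requires hitting the top of the entropy graph, where Theorem~\ref{condition-principle} only gives $\varepsilon$-approximation; here I instead invoke Theorem~\ref{BarreiraDoutor2009-theorem3}. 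Its hypothesis asks that every family in $\text{span}\{a^1,b^1,\dots,a^d,b^d,u\}$ has a unique equilibrium measure; since each such linear combination is an almost additive family of continuous functions with bounded variation satisfying $\sup_{t \in [0,s]}\|\cdot\|_\infty < \infty$, Theorem~\ref{theorem-AA} gives uniqueness on the basic set $\Lambda$. Theorem~\ref{BarreiraDoutor2009-theorem3} then supplies, for each $\alpha \in \mathrm{Int}(L_{A,B})$, an ergodic measure $\mu_\alpha$ with $\mathcal{P}_{A,B}(\mu_\alpha)=\alpha$ and $h_{\mu_\alpha}(\Phi) = H_{A,B}(\Phi,\alpha)$; combined with (II) applied to $\chi \equiv 0$ (so that $\max_{\mu \in M_{A,B}(\alpha)}\chi(\mu) = 0$), every entropy value in $[0, H_{A,B}(\Phi,\alpha)]$ is realized by an ergodic measure in $M_{A,B}^{erg}(\alpha)$, which is exactly (IV).

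The main obstacle I anticipate is ensuring, at the moment item (1) of Theorem~\ref{thm-Almost-Additive}'s hypothesis is used, that the target $\alpha \in \mathrm{Int}(L_{A,B})$ truly lies in $\mathrm{Int}(\mathcal{P}_{A,B}(\mathcal{M}(\Phi,\Theta)))$ rather than just near it. The cleanest fix is to first choose the $2^d$ auxiliary measures $\{\mu_\xi\}_{\xi \in \{+,-\}^d}$ of Lemma~\ref{lemma-D} with $\mathcal{P}_{A,B}(\mu_\xi) \in \alpha^\xi$, take $F = \cov\{\mu_\xi\}_{\xi}$, and shrink $\zeta$ in Theorem~\ref{Mainlemma-convex-by-horseshoe5} enough (using continuity of $\mathcal{P}_{A,B}$) that the approximating measures on $\Theta$ still occupy all $2^d$ "quadrants" $\alpha^\xi$. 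Corollary~\ref{corollary-A} then forces $\alpha$ into the interior of the convex image $\mathcal{P}_{A,B}(\mathcal{M}(\Phi,\Theta))$, at which point Theorem~\ref{condition-principle} fires at $a = \alpha$ and the rest of the argument goes through verbatim.
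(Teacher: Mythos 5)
Your proposal is correct and follows essentially the same route as the paper: verify the multi-horseshoe hypothesis of Theorem~\ref{thm-Almost-Additive} via Theorem~\ref{Mainlemma-convex-by-horseshoe5} and Theorem~\ref{condition-principle}, supply density of zero-entropy measures and a fully supported invariant measure, read off (I)--(III) from Theorem~\ref{thm-Almost-Additive2}, and for (IV) specialize to $\chi\equiv 0$ and close the top of the entropy range with Theorem~\ref{BarreiraDoutor2009-theorem3} and the uniqueness-of-equilibrium-measure result (Theorem~\ref{theorem-AA}). The obstacle you flag --- landing $\alpha$ in $\mathrm{Int}(\mathcal{P}_{A,B}(\mathcal{M}(\Phi,\Theta)))$ --- is in fact already handled inside the proof of Theorem~\ref{thm-Almost-Additive} by exactly the $2^d$-quadrant construction you describe, so there is nothing left for you to patch when invoking it as a black box.
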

\begin{proof}
	It's known that each basic set is expansive, there is an invariant measure with full support, the set of periodic measures supported on in $\Lambda$ is a dense subset of  $\mathcal{M}(\Phi,\Lambda)$ and thus
	$\{\mu\in \mathcal{M}(\Phi,M):h_{\mu}(\Phi)=0\}$ is dense in $\mathcal{M}(\Phi,M)$.
	Then we obtain item(I)-(III) by Theorem \ref{Mainlemma-convex-by-horseshoe5}, Theorem \ref{condition-principle}, Theorem \ref{thm-Almost-Additive} and Theorem \ref{thm-Almost-Additive2}.
	
	Let $\chi=0,$ then by item(III) we have $\{(\mathcal{P}_{A,B}(\mu), h_{\mu}(\Phi)):\mu\in \mathcal{M}(\Phi,\Lambda),\ \alpha=\mathcal{P}_{A,B}(\mu)\in\mathrm{Int}(L_{A,B}),\ 0\leq h_{\mu}(\Phi)< H_{A,B}(\Phi,\alpha)\}$ coincides with $\{(\mathcal{P}_{A,B}(\mu), h_{\mu}(\Phi)):\mu\in \mathcal{M}_{erg}(\Phi,\Lambda),\ \alpha=\mathcal{P}_{A,B}(\mu)\in\mathrm{Int}(L_{A,B}),\ 0\leq h_{\mu}(\Phi)< H_{A,B}(\Phi,\alpha)\}.$  Combining with Theorem \ref{theorem-AA} and \ref{BarreiraDoutor2009-theorem3} we obtain item(IV).
\end{proof}

Now we give the proof of Theorem  \ref{Thm:basic-set1} and \ref{Thm:basic-set2}.

For a continuous function $g$, let $g_{t}=\int_{0}^{t} g(\phi_\tau(x))d\tau$ then $\sup _{t \in[0, 1]}\left\|g_{t}\right\|_{\infty}<\infty,$ $(g_t)_{t\geq 0}$ is a additive family of contiunous functions. So
let $\chi\equiv 0$ and $d=1$ in Theorem \ref{thm-almost2}(II),  we obtain Theorem \ref{Thm:basic-set1}.

Let $\chi\equiv 0$ and $d=2$ in Theorem \ref{thm-almost2}(II), for any $\alpha\in \mathrm{Int}(L_{g,h})$ and $$0\leq c< \max\left\{h_{\nu}(X):\nu\in M_{g,h}(\alpha)\right\},$$ the set $\{\mu\in M_{g,h}^{erg}(\alpha):h_\mu(X)=c,\ S_\mu=\Lambda\}$ is residual in $\{\mu\in M_{g,h}(\alpha):h_\mu(X)\geq c\}.$ Take $c=0,$ then the set $\{\mu\in M_{g,h}^{erg}(\alpha):S_\mu=\Lambda\}$ is dense in $M_{g,h}(\alpha).$ From \cite[Proposition 5.7]{DGS}, $\mathcal{M}_{erg}(\Phi,\Lambda)$ is a $G_\delta$ subset of $\mathcal{M}(\Phi,\Lambda).$ Then $M_{g,h}^{erg}(\alpha)$ is a $G_\delta$ subset of $M_{g,h}(\alpha),$ and thus   $M_{g,h}^{erg}(\alpha)$ is residual in $M_{g,h}(\alpha).$ Since $M_{g,h}(\alpha)$ is convex, by Lemma \ref{lemma-B} the set $\{\mu\in M_{g,h}(\alpha):S_\mu=\Lambda\}$ is residual in $M_{g,h}(\alpha).$ So $\{\mu\in M_{g,h}^{erg}(\alpha):S_\mu=\Lambda\}$ is residual in $M_{g,h}(\alpha)$ and we obtain Theorem \ref{Thm:basic-set2}.

\section{Singular hyperbolic attractors}\label{section-singular hyperbolic}
In this section, we consider singular hyperbolic attractors and give corresponding results on Question \ref{Conjecture-2}.

\subsection{Singular hyperbolicity and geometric Lorenz attractors}
The concept of singular hyperbolicity was introduced by Morales-Pacifico-Pujals~\cite{mpp} to describe the geometric structure of Lorenz attractors and these ideas were extended to higher dimensional cases in ~\cite{lgw,Me-Mo}. Let us recall this notion.

\begin{definition}\label{Def:singular-hyp}
	Given  a vector field $X\in\mathscr{X}^1(M)$, a compact and invariant set $\Lambda$ is {\it singular hyperbolic} if it admits a continuous ${\rm D}\phi_t$-invariant splitting $T_{\Lambda}M=E^{ss}\oplus E^{cu}$ and constants $C,\eta>0$ such that, 
	for any $x\in\Lambda$ and any $t\geq 0$, 
	\begin{itemize}
		\item $E^{ss}\oplus E^{cu}$ is a {\it dominated splitting} : $\|{\rm D}\phi_t|_{E^{ss}(x)}\|\cdot \|{\rm D}\phi_{-t}|_{E^{cu}(\phi_t(x))}\|< Ce^{-\eta t}$, and
		\item $E^{ss}$ is uniformly contracted by ${\rm D}\phi_t$ : $\|{\rm D}\phi_t(v)\|< Ce^{-\eta t}\|v\|$ for any $v\in E^{ss}(x)\setminus\{0\}$;
		\item $E^{cu}$ is {\it sectionally expanded} by ${\rm D}\phi_t$ :  $|\det{\rm D}\phi_t|_{V_x}| > Ce^{\eta t}$ for any 2-dimensional  subspace $V_x\subset E^{cu}_x$.
	\end{itemize}
\end{definition}
\begin{lemma}\cite[Theorem A and Lemma 2.9]{PYY}\label{lemma-semicontinuous}
	Given  a vector field $X\in\mathscr{X}^1(M)$ and an invariant compact  set $\Lambda$.  If $\Lambda$ is sectional hyperbolic, all the singularities in $\Lambda$  are hyperbolic, then $\Phi$ is entropy expansive on $\Lambda$ and thus the metric entropy function $\mathcal{M}(\Phi,\Lambda) \ni \mu \mapsto h_\mu(\Phi)$ is upper semi-continuous.
\end{lemma}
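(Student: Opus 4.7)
My plan is to prove the lemma by first establishing that the flow $\Phi$ is entropy-expansive (i.e., $h^*(\phi_1)=0$, where $h^*$ denotes the Bowen tail entropy) on $\Lambda$, and then invoking Bowen's classical theorem that tail entropy zero for a continuous map implies that the metric entropy function $\mu\mapsto h_\mu(\phi_1)$ is upper semi-continuous on the space of invariant measures. Since by definition $h_\mu(\Phi)=h_\mu(\phi_1)$, the upper semi-continuity statement follows at once from entropy-expansiveness, so the whole task reduces to proving the latter.

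To prove entropy-expansiveness I need to exhibit $\varepsilon>0$ such that for every $x\in\Lambda$ the bi-infinite Bowen ball
$$\Gamma_\varepsilon^*(x)=\{y\in M:\, d(\phi_t(y),\phi_t(x))<\varepsilon\text{ for all }t\in\mathbb{R}\}$$
carries zero topological entropy under $\phi_1$. The natural strategy is to split the analysis into a \emph{regular} part and a \emph{singular} part. On the regular part, far from the singular set, the sectional-hyperbolic splitting $T_\Lambda M=E^{ss}\oplus E^{cu}$ provides a continuous family of local strong stable manifolds $W^{ss}_\varepsilon(x)$ tangent to $E^{ss}$ that are uniformly contracted in forward time; the domination together with sectional expansion on $E^{cu}$ forces any $y\in\Gamma_\varepsilon^*(x)$ to lie on the local flow orbit of a point in $W^{ss}_\varepsilon(x)$, because backward time would otherwise blow up the $E^{cu}$ component. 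Hence $\Gamma_\varepsilon^*(x)$ sits inside a $(\dim E^{ss}+1)$-dimensional contracting packet that has zero entropy for $\phi_1$.

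The genuinely hard part is treating $x$ near a singularity $\sigma\in\Lambda$, since close to $\sigma$ the flow slows down and the naive stable-manifold picture breaks. Here I would use that $\sigma$ is a hyperbolic equilibrium, linearize (or use a Hartman--Grobman-type normal form) in a small neighborhood $U_\sigma$, and combine this with the sectional-hyperbolic structure on $\Lambda\cap U_\sigma$ to show that an orbit which $\varepsilon$-shadows $\sigma$ for all positive time must lie in the local strong stable manifold $W^{ss}_{{\rm loc}}(\sigma)$, and similarly in backward time. The main technical point is a uniform ``escape time'' estimate: controlling, using the hyperbolic eigenvalues at $\sigma$, how long a bi-$\varepsilon$-shadowing orbit can linger in $U_\sigma$, so that the contribution of trajectories that pass near $\sigma$ can be absorbed into finitely many patches. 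Once this escape bound is in hand, I would cover $\Lambda$ by finitely many regular-plus-singular pieces and patch the local zero-entropy estimates together uniformly in $x$ to conclude $h^*(\phi_1)=0$.

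The chief obstacle is clearly this singular estimate: the interaction between the $(\sigma$-specific$)$ hyperbolic splitting and the $(\Lambda$-wide$)$ sectional-hyperbolic splitting is delicate because the two splittings need not agree on $T_\sigma M$, and the usual smooth linearization constants have to be reconciled with the global domination constants $C,\eta$. I expect the core of \cite{PYY} to consist precisely of this reconciliation, after which the upper semi-continuity of $\mu\mapsto h_\mu(\Phi)$ is a soft consequence of Bowen's theorem applied to the time-one map.
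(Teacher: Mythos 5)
The statement you are trying to prove is not proved in the paper at all: Lemma~\ref{lemma-semicontinuous} is quoted verbatim as an external result from~\cite{PYY} (Theorem~A and Lemma~2.9 there), so there is no internal argument to compare yours against. What you have written is a blind reconstruction of the proof that lives in~\cite{PYY}, not of anything in this paper.

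That said, a couple of comments on the sketch itself, since you asked for a plan. Your top-level reduction is the standard and correct one: entropy expansiveness means $h^*(\phi_1)=0$, and Bowen's theorem then yields upper semi-continuity of $\mu\mapsto h_\mu(\phi_1)=h_\mu(\Phi)$. The genuine gap is in your treatment of the regular part. You claim that the sectional expansion of $E^{cu}$ forces any $y\in\Gamma^*_\varepsilon(x)$ onto the local strong stable manifold of the flow orbit of $x$, ``because backward time would otherwise blow up the $E^{cu}$ component.'' This is exactly the step that sectional hyperbolicity does \emph{not} give you for free: sectional expansion controls areas of two-dimensional subspaces of $E^{cu}$, not the norm growth of individual vectors, so a nonzero $E^{cu}$-displacement need not grow under $\mathrm{D}\phi_{-t}$. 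Promoting area expansion to the pointwise separation you need (or, alternatively, working with the rescaled or extended linear Poincar\'e flow, which is how~\cite{PYY} circumvents this) is precisely the nontrivial content of the theorem, both away from and near the singularities, and your outline does not supply it. Your singular-neighborhood discussion has the same character: you correctly identify the escape-time estimate as the hard point but offer only the intent to ``reconcile'' the two splittings, which is where the real work is. In short, the plan identifies the right target ($h^*(\phi_1)=0$) and the right soft consequence, but the two hard steps are flagged rather than carried out, and the regular-part claim as stated is not a consequence of the sectional-hyperbolicity hypotheses.
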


We recall the concept of homoclinic class of a hyperbolic periodic orbit. 

\begin{definition}\label{Def:homoclinic-class}
	Given a vector field $X\in\mathscr{X}^1(M)$, an  invariant compact subset $\Lambda\subset M$ is a {\it homoclinic class} if there exists a hyperbolic periodic point $p\in \Lambda \cap \text{Per}(X)$ so that 
	$$\Lambda\colon=\overline{W^{s}(\orb(p)) \pitchfork W^{u}(\orb(p))},$$ 
	that is, it is the closure of the points of transversal intersection between  stable and  unstable manifolds of the periodic
	orbit $\orb(p)$ of $p$.  
	We say a homoclinic class is \emph{non-trivial} if it is not reduced to a single hyperbolic periodic orbit.
\end{definition}

From \cite[Theorem 2.17]{AP} any nonempty homoclinic class $\Lambda$  contains a dense set of periodic orbits. Then there is an invariant measure $\tilde{\mu}$ with $S_{\tilde{\mu}}=\Lambda$ by \cite[Proposition 21.12]{DGS}.
\begin{lemma}\label{lemma-full-support}
	Let $X\in\mathscr{X}^1(M)$ and $\Lambda$ be a homoclinic class of $X$.  Then there is an invariant measure $\tilde{\mu}$ with $S_{\tilde{\mu}}=\Lambda.$
\end{lemma}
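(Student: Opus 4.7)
The plan is to produce the desired fully-supported invariant measure as a convergent convex combination of periodic measures. First I would invoke the result \cite[Theorem 2.17]{AP} cited just above the lemma: any nonempty homoclinic class $\Lambda$ of $X$ contains a dense set of hyperbolic periodic orbits. This is a classical consequence of the Birkhoff-Smale theorem applied to the transverse homoclinic intersections defining the class.

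Next I would pick a countable collection of periodic points $\{p_n\}_{n \geq 1} \subset \Lambda$ whose orbits are dense in $\Lambda$; this is possible since $\Lambda$ is compact metric and hence separable, and since periodic orbits are dense in $\Lambda$. For each $n$, let $\mu_n$ denote the $\Phi$-invariant probability measure uniformly distributed along the periodic orbit $\orb(p_n)$, so that $S_{\mu_n} = \orb(p_n)$. Define
$$
\tilde{\mu} := \sum_{n=1}^{\infty} 2^{-n} \mu_n.
$$
This series converges in the weak$^*$ topology on $\mathcal{M}(\Phi,\Lambda)$ to a Borel probability measure on $\Lambda$, and since each $\mu_n$ is $\Phi$-invariant and the set of invariant measures is closed and convex, $\tilde{\mu} \in \mathcal{M}(\Phi,\Lambda)$.

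Finally I would check that $S_{\tilde{\mu}} = \Lambda$. Since each $\orb(p_n) \subset \Lambda$, clearly $S_{\tilde{\mu}} \subset \Lambda$. Conversely, given any $x \in \Lambda$ and any open neighborhood $U$ of $x$, density of $\bigcup_n \orb(p_n)$ in $\Lambda$ forces $\orb(p_n) \cap U \neq \emptyset$ for some $n$, whence $\mu_n(U) > 0$ and therefore $\tilde{\mu}(U) \geq 2^{-n} \mu_n(U) > 0$. Thus $x \in S_{\tilde{\mu}}$, which gives $\Lambda \subset S_{\tilde{\mu}}$ and completes the proof. Alternatively, one can simply cite \cite[Proposition 21.12]{DGS} as the authors do, which packages exactly this construction: density of periodic orbits in a compact invariant set implies existence of a fully supported invariant measure. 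There is no real obstacle here; the only subtlety is ensuring the countable dense family of periodic orbits exists, which follows from separability of $\Lambda$ combined with \cite[Theorem 2.17]{AP}.
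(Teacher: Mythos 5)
Your proposal is correct and follows the same route as the paper: density of periodic orbits via \cite[Theorem 2.17]{AP}, then the standard construction of a fully supported invariant measure, which the paper simply cites as \cite[Proposition 21.12]{DGS} and you spell out explicitly as the convergent convex combination $\sum_n 2^{-n}\mu_n$. You correctly identify this equivalence yourself at the end, so there is nothing further to add.
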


Now we give the definition of geometric Lorenz attractors following Guckenheimer and Williams~\cite{guck,gw,williams} for vector fields on a closed 3-manifold $M^3$.

\begin{definition}\label{Def:lorenz}
	We say $X\in\mathscr{X}^r(M^3)$ ($r\geq1$) exhibits a \emph{geometric Lorenz attractor} $\Lambda$, if $X$ has an attracting region $U\subset M^3$ such that  $\Lambda=\bigcap_{t>0}\phi^X_t(U)$ is a singular hyperbolic attractor and satisfies:
	\begin{itemize}
		\item There exists a unique singularity $\sigma\in\Lambda$ with three exponents $\lambda_1<\lambda_2<0<\lambda_3$, which satisfy $\lambda_1+\lambda_3<0$ and $\lambda_2+\lambda_3>0$.  
		\item $\Lambda$ admits a $C^r$-smooth cross section $\Sigma$ which is $C^1$-diffeomorphic to $[-1,1]\times[-1,1]$, such that $l=\{0\}\times[-1,1]=W^s_{\it loc}(\sigma)\cap\Sigma$, and
		for every $z\in U\setminus W^s_{\it loc}(\sigma)$, there exists $t>0$ such that $\phi_t^X(z)\in\Sigma$.
		\item Up to the previous identification, the Poincar\'e map $P:\Sigma\setminus l\rightarrow\Sigma$ is a skew-product map
		$$
		P(x,y)=\big( f(x)~,~H(x,y) \big), \qquad \forall(x,y)\in[-1,1]^2\setminus l.
		$$
		Moreover, it satisfies
		\begin{itemize}
			\item $H(x,y)<0$ for $x>0$, and $H(x,y)>0$ for $x<0$; \color{black} 
			\item $\sup_{(x,y)\in\Sigma\setminus l}\big|\partial H(x,y)/\partial y\big|<1$, 
			and
			$\sup_{(x,y)\in\Sigma\setminus l}\big|\partial H(x,y)/\partial x\big|<1$; \color{black} 
			\item the one-dimensional quotient map $f:[-1,1]\setminus\{0\}\rightarrow[-1,1]$ is $C^1$-smooth and satisfies
			$\lim_{x\rightarrow0^-}f(x)=1$,
			$\lim_{x\rightarrow0^+}f(x)=-1$, $-1<f(x)<1$ and
			$f'(x)>\sqrt{2}$ for every $x\in[-1,1]\setminus\{0\}$.
		\end{itemize}
	\end{itemize}
\end{definition}

\begin{figure}[htbp]
	\centering
	\includegraphics[width=14cm]{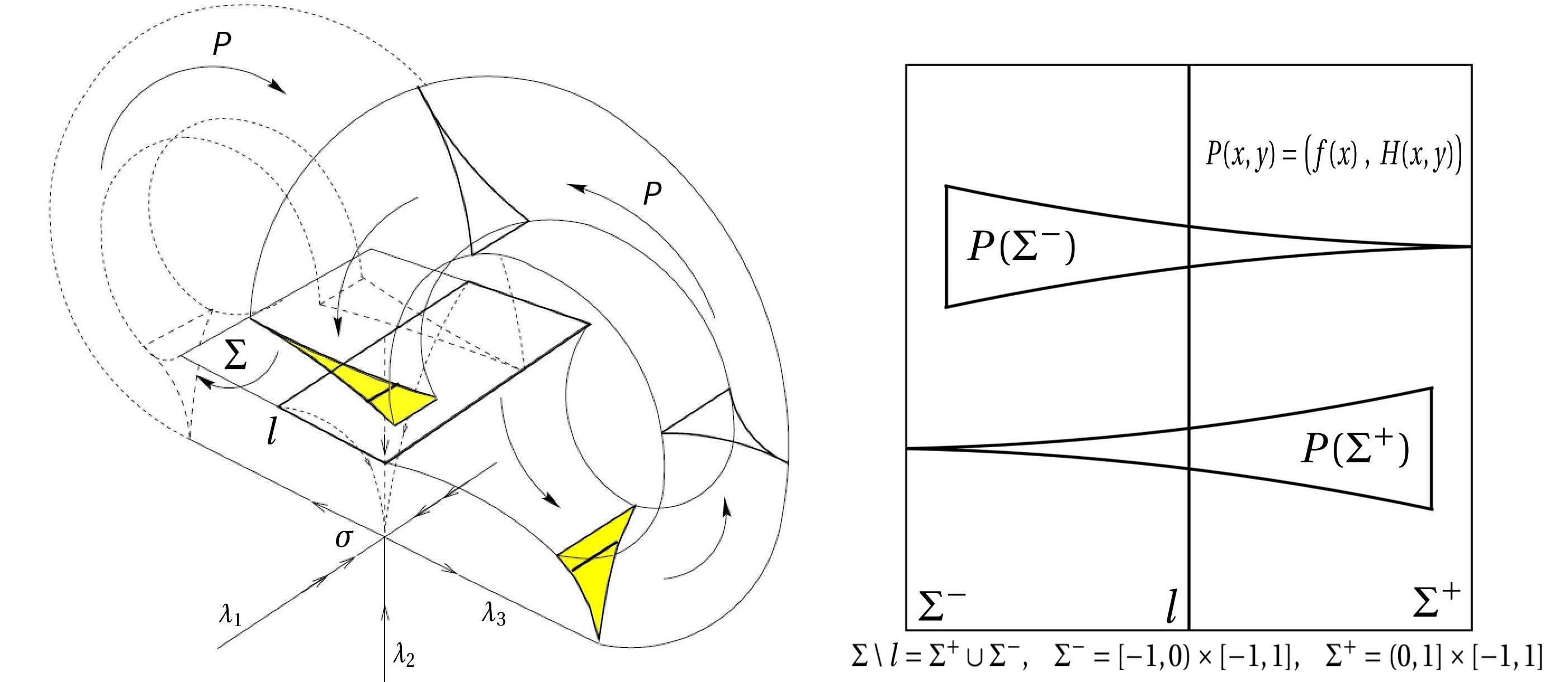}
	\caption{Geometric Lorenz attractor and return map}
\end{figure}
It has been proved that geometric Lorenz attractor is a homoclinic class~\cite[Theorem 6.8]{AP} and  $C^2$-robust~\cite[Proposition 4.7]{STW} .

\begin{proposition}\label{Pro:lorenz-robust}
	Let $r\in\mathbb{N}_{\geq2}\cup\{\infty\}$  and $X\in\mathscr{X}^r(M^3)$. If $X$ exhibits a geometric Lorenz attractor $\Lambda$ with attracting region $U$, then there exists a $C^r$-neighborhood $\cU$ of $X$ in $\mathscr{X}^r(M^3)$, such that for every $Y\in\cU$, $U$ is an attracting region of $Y$, and the maximal invariant set $\Lambda_Y=\bigcap_{t>0}\phi_t^Y(U)$ is a geometric Lorenz attractor. 
	Moreover, the geome\-tric Lorenz attractor is a singular hyperbolic homoclinic class, and every pair of periodic orbits are homoclinic related.
\end{proposition}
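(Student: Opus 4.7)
My plan is to verify the four clauses in the definition of a geometric Lorenz attractor in turn for every $Y$ sufficiently $C^r$-close to $X$, and then deduce the homoclinic-class and homoclinic-relation statements from the resulting skew-product structure. The first item, persistence of $U$ as an attracting region, is essentially $C^0$-open: by hypothesis there exists $t_0>0$ with $\overline{\phi_{t_0}^X(U)}\subset \interior(U)$, and this inclusion is preserved for all $Y$ that are $C^0$-close enough to $X$. Next I would handle the singularity: since $\sigma$ is hyperbolic, the implicit function theorem applied to the zero set of a vector field produces, for $Y$ in a $C^1$-neighborhood of $X$, a unique hyperbolic continuation $\sigma_Y\in U$ whose eigenvalue configuration $\lambda_1(Y)<\lambda_2(Y)<0<\lambda_3(Y)$ still satisfies $\lambda_1+\lambda_3<0$ and $\lambda_2+\lambda_3>0$, because these are open inequalities.

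The core of the argument is to upgrade the Poincar\'e first return map to a persistent skew-product. I would first show that $\Sigma$ remains transverse to $Y$ for $Y$ near $X$, so the first return map $P_Y:\Sigma\setminus l_Y\to\Sigma$ is well defined, where $l_Y=W^s_{\mathrm{loc}}(\sigma_Y)\cap\Sigma$ is the continuation of $l$. Using the singular hyperbolic splitting together with the invariant-foliation theorem of Hirsch--Pugh--Shub, the strong stable foliation $\mathcal{F}^{ss}_Y$ persists and remains $C^1$-close to the original; its intersection with $\Sigma$ gives a one-dimensional foliation on $\Sigma$ invariant under $P_Y$, and the quotient by this foliation yields the one-dimensional factor $f_Y:[-1,1]\setminus\{0\}\to[-1,1]$. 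The open conditions $f'_Y(x)>\sqrt{2}$, $|\partial_y H_Y|<1$, and $|\partial_x H_Y|<1$ then all persist because they are $C^1$-open on a compact region bounded away from $l_Y$. The boundary values $f_Y(0^\pm)=\mp 1$ are forced by the geometry of $W^u(\sigma_Y)$: each of the two unstable separatrices returns to $\Sigma$ and its first return point gives the endpoint of the image of $f_Y$, varying continuously with $Y$.

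Once the skew-product form is in place, $\Lambda_Y=\bigcap_{t>0}\phi_t^Y(U)$ is by definition a geometric Lorenz attractor. To get that it is a homoclinic class I would use that the expanding map $f_Y$ with slope $>\sqrt{2}$ is topologically mixing on $[-1,1]$, which forces density of periodic orbits and transitivity of $P_Y$, hence of the flow on $\Lambda_Y$. Given any hyperbolic periodic orbit $\orb(p)\subset\Lambda_Y$, the expansion of $f_Y$ shows that some forward iterate of the curve $W^u(\orb(p))\cap\Sigma$ covers $[-1,1]$ modulo the stable foliation, so $\Lambda_Y=\overline{W^s(\orb(p))\pitchfork W^u(\orb(p))}$. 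The same covering argument, applied to two periodic orbits $\orb(p_1),\orb(p_2)$, gives transverse intersections in both directions, so they are homoclinically related.

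The main obstacle I anticipate is exactly the persistence of the skew-product structure: specifically, the fact that the stable foliation on $\Sigma$ must vary continuously enough with $Y$ for the quotient $f_Y$ to inherit the pointwise estimate $f'_Y>\sqrt{2}$. The strong stable foliation is in general only H\"older, so pushing a $C^1$ estimate through the quotient requires exploiting that the sub-bundle $E^{ss}_Y$ is $C^r$-smooth along individual leaves and using the assumption $r\geq 2$ to obtain uniform $C^1$-continuity of the holonomies on compact subregions of $\Sigma$ bounded away from $l_Y$. Once this smoothness is secured, everything else reduces to open-condition bookkeeping on finitely many Poincar\'e iterations.
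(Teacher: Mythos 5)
The paper does not actually prove this proposition; it is treated as a known result, with the sentence preceding the statement citing Ara\'ujo--Pacifico (Theorem~6.8 of \cite{AP}) for the homoclinic class part and Shi--Tian--Wang (Proposition~4.7 of \cite{STW}) for the $C^2$-robustness. Your proposal is therefore a genuinely different route: a self-contained geometric argument rather than a citation.

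Most of your bookkeeping is sound -- openness of the attracting region, hyperbolic continuation of the singularity $\sigma$, openness of the eigenvalue inequalities $\lambda_1+\lambda_3<0$, $\lambda_2+\lambda_3>0$, openness of the derivative estimates on compact subsets of $\Sigma$ bounded away from $l_Y$, and the use of the locally-eventually-onto property of the slope-$\sqrt 2$ expanding factor to derive transitivity, density of periodic orbits, and homoclinic relations. You also correctly locate the crux: the $C^1$ regularity of the stable foliation of the Poincar\'e map on $\Sigma$ and its $C^1$-continuous dependence on $Y$, which is precisely why the hypothesis demands $r\ge 2$. However, the mechanism you sketch for this step is not the right one and leaves a real gap. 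Leafwise $C^r$ smoothness of the individual strong stable manifolds does not deliver $C^1$ transverse regularity of the holonomy -- a foliation can have $C^\infty$ leaves and only H\"older holonomy. What the literature actually uses is a $C^1$-section (graph transform) argument applied directly to the return map $P$: when $X$ is $C^2$, so is $P$, and the skew-product estimates $|\partial_y H|<1$, $|\partial_x H|<1$, $f'>\sqrt 2$ supply the domination and bunching inequalities that make the graph transform a contraction in a $C^1$ space of sections; its fixed point is then a $C^1$ foliation varying continuously with $Y$ in the $C^1$ topology. Without invoking this bunching mechanism you cannot legitimately push the estimate $f'_Y>\sqrt 2$ through the quotient, so this step of your argument needs to be replaced.

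A secondary issue: the boundary normalization $\lim_{x\to 0^\pm}f_Y(x)=\mp 1$ is an exact equality in the definition and is not preserved as such by perturbation. One must first continue $\Sigma$ to a cross section transverse to $Y$ and then recoordinatize it by a $Y$-dependent $C^1$ diffeomorphism to $[-1,1]^2$ carrying $l_Y=W^s_{loc}(\sigma_Y)\cap\Sigma$ to $\{0\}\times[-1,1]$ and the first-return points of the two unstable separatrices of $\sigma_Y$ to $\{\pm 1\}\times[-1,1]$; only then does the normalized skew-product form hold. Your phrase about the endpoints ``varying continuously with $Y$'' elides this recoordinatization. With these two points repaired, the rest of your outline matches in spirit the arguments of \cite{AP} and \cite{STW}.
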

For singular hyperbolic attractors, we have the following result.
\begin{theoremalph}\label{Thm:irregular-Lorenz}
	There exists a Baire residual subset $\mathcal{R}^r\subset\mathscr{X}^r(M^3),(r\in\mathbb{N}_{\geq 2})$ and a Baire residual subset $\mathcal{R}\subset\mathscr{X}^1(M)$ so that, 
	if $\Lambda$ is a geometric Lorenz attractor of $X\in\mathcal{R}^r$ or a singular hyperbolic attractor of $X\in\mathcal{R},$ then for any continuous function $g,$ $h$ on $\Lambda$  we have
    \begin{description}
    	\item[(I)] For  any $\alpha\in \mathrm{Int}(L_{g})$ and $0\leq c< \max\{h_{\mu}(\Phi):\mu\in M_{g}(\alpha)\},$ the set $\{\mu\in M_{g}^{erg}(\alpha):h_{\mu}(\Phi)=c,\ S_\mu=\Lambda\}$ is residual in $\{\mu\in M_{g}(\alpha):h_{\mu}(\Phi)\geq c\}.$ 
    	\item[(II)] For  any $\alpha\in \mathrm{Int}(L_{g,h}),$  the set $\{\mu\in M_{g,h}^{erg}(\alpha):S_\mu=\Lambda\}$ is residual in $M_{g,h}(\alpha).$
    \end{description}
\end{theoremalph}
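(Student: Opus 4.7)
The plan is to reduce this result to the abstract framework of Theorem \ref{thm-almost2} (with $\chi\equiv0$ and $d=1$ for (I), $d=2$ for (II)), exactly as Theorems \ref{Thm:basic-set1} and \ref{Thm:basic-set2} were deduced from it for basic sets. The two ingredients that the basic-set case received "for free" but that must be arranged by a genericity argument here are: (i) the upper semi-continuity of the metric entropy, (ii) the density of ergodic (in fact periodic) measures in $\mathcal{M}(\Phi,\Lambda)$, and (iii) the 'multi-horseshoe' dense property on $\Lambda$.

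Step 1. Item (i) is automatic on singular hyperbolic sets whose singularities are hyperbolic, via Lemma \ref{lemma-semicontinuous}. For item (ii), I would define $\mathcal{R}\subset\mathscr{X}^1(M)$ to be the $C^1$-residual set on which Mañé's ergodic closing lemma yields that every ergodic invariant measure supported in a singular hyperbolic attractor is the weak$^*$-limit of periodic measures (this is a standard $C^1$-generic consequence). For geometric Lorenz attractors, their $C^r$-robustness (Proposition \ref{Pro:lorenz-robust}) and the fact that every pair of periodic orbits is homoclinically related allow one to define the $C^r$-residual set $\mathcal{R}^r$ where an analogous periodic approximation property holds for all measures supported on the attractor.

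Step 2. The main work is to establish the 'multi-horseshoe' dense property (Definition \ref{def-strong-basic}) on $\Lambda$ for $X\in\mathcal{R}$ or $X\in\mathcal{R}^r$. Given $F=\cov\{\mu_i\}_{i=1}^m\subseteq\mathcal{M}(\Phi,\Lambda)$ and $\eta,\zeta>0$, by Step 1 each $\mu_i$ is approximated (in $d^*$ and in entropy up to $\eta$) by a convex combination of periodic measures $\{\nu_{i,j}\}_j$ whose supporting orbits $\gamma_{i,j}$ lie in $\Lambda$. Since $\Lambda$ is a (singular hyperbolic) homoclinic class and, in the Lorenz case, all periodic orbits are homoclinically related (Proposition \ref{Pro:lorenz-robust}), the $\gamma_{i,j}$ lie in the same homoclinic class. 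Standard Smale-horseshoe constructions using the transverse intersections of stable and unstable manifolds along these orbits then produce a basic set $\Theta\subsetneq\Lambda$ (avoiding the singularity, since the homoclinic class is singular hyperbolic but the $\gamma_{i,j}$ are uniformly hyperbolic) whose invariant measures approximate $F$ in Hausdorff distance, together with sub-horseshoes $\Lambda_i\subseteq\Theta$ approximating each $\mu_i$ with topological entropy exceeding $h_{\mu_i}(\Phi)-\eta$. This is precisely the multi-horseshoe dense property.

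Step 3. Once the multi-horseshoe dense property is in place, the remainder is mechanical. Lemma \ref{lemma-full-support} supplies an invariant measure of full support, so the "full-support" residuality assertion of Theorem \ref{thm-Almost-Additive2}(2) is available via Lemma \ref{lemma-B}. The periodic measures supplied by Step 1 all have zero entropy, so $\{\mu\in\mathcal{M}(\Phi,\Lambda):h_\mu(\Phi)=0\}$ is dense in $\mathcal{M}(\Phi,\Lambda)$, which is the last hypothesis of Theorem \ref{thm-Almost-Additive2}. Applying Theorem \ref{thm-almost2}(II) with $d=1$, $g_t=\int_0^t g\circ\phi_\tau\,d\tau$ gives (I), and with $d=2$ together with the same argument used to derive Theorem \ref{Thm:basic-set2} at the end of Section \ref{section-thm} gives (II). The hard part is Step 2: the construction of the horseshoes $\Theta$ and $\Lambda_i$ inside a singular hyperbolic attractor, where one must carefully steer clear of the singularity while still approximating measures that may themselves charge a neighborhood of it in a balanced way, and must verify that the resulting horseshoes are transitive and satisfy the Hausdorff-distance bounds in all three items of Definition \ref{def-strong-basic} simultaneously.
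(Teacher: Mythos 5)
Your overall architecture matches the paper: reduce to the abstract framework of Section~\ref{Almost Additive} (via the ${}'$multi-horseshoe${}'$ dense property, upper semi-continuity of entropy, full support and density of zero-entropy measures) and then specialize $\chi\equiv0$, $d=1,2$. However, your Step 2 contains a genuine gap, and Step 1 misses a needed hypothesis.

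In Step 2 you assert that each $\mu_i$ can be approximated ``in $d^*$ and in entropy up to $\eta$ by a convex combination of periodic measures.'' This cannot work: periodic measures have zero entropy, hence any convex combination of them has zero entropy, so this approximation loses all entropy and cannot feed into item (2) of Definition~\ref{def-strong-basic}. What the paper actually uses is Proposition~\ref{Pro:Horseshoe-approximation-inv} (a Katok-type result from~\cite{STVW}): each $\mu_i$ is approximated by an \emph{ergodic} measure $\nu_i$ supported on a \emph{basic set} $\Lambda_i'\subset\Lambda$, with $h_{\nu_i}(\Phi)>h_{\mu_i}(\Phi)-\eta/2$. Then Lemma~\ref{Lem:large-horseshoe} merges the $\Lambda_i'$ (whose periodic orbits are all homoclinically related) into a single larger basic set $\tilde\Lambda\subset\Lambda$, and one simply applies the already-established basic-set version Theorem~\ref{Mainlemma-convex-by-horseshoe5} to $\tilde\Lambda$ and $\tilde F=\cov\{\nu_i\}$. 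Your proposal instead tries to build the horseshoes directly from transverse homoclinic intersections of the periodic orbits supporting the approximating combination; even setting aside the entropy defect above, you give no mechanism to ensure the Hausdorff-distance estimate $d_H(F,\mathcal{M}(\Phi,\Theta))<\zeta$ for the whole simplex $F$. The reduction to the basic-set case is the key trick you are missing.

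In Step 1, periodic density alone is not the right hypothesis. The paper's Theorem~\ref{thm-almost} requires $\mathcal{M}(\Phi,\Lambda)=\overline{\mathcal{M}_1(\Lambda)}=\overline{\mathcal{M}_{per}(\Lambda)}$, where $\mathcal{M}_1(\Lambda)$ is the set of invariant measures not charging singularities. The condition $\overline{\mathcal{M}_1(\Lambda)}=\mathcal{M}(\Phi,\Lambda)$ (in particular that the Dirac mass at a singularity can be approximated by nonsingular invariant measures) is a nontrivial ingredient, established in~\cite{STW} for Lorenz attractors and in~\cite{CY-Robust-attractor,STW} generically for singular hyperbolic attractors; Mañé's ergodic closing lemma does not by itself deliver this. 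The paper, in Remark~\ref{remark-AA}, defines $\mathcal{R}$ and $\mathcal{R}^r$ precisely by appealing to those results rather than re-deriving them. You should cite these rather than invoking the ergodic closing lemma as a black box for both regularity classes (which in the $C^r$ case, $r\ge2$, it does not cover).
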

\begin{remark}\label{remark-AA}
	We will prove one general result Theorem~\ref{thm-almost}  stating that when $\Lambda$ is a singular hyperbolic homoclinic class such that each pair of periodic orbits are homoclinically related and $\mathcal{M}(\Phi,\Lambda)=\overline{\mathcal{M}_1(\Lambda)}=\overline{\mathcal{M}_{per}(\Lambda)}$, then the conclusions of Theorem~\ref{Thm:irregular-Lorenz} hold. Then Theorem~\ref{Thm:irregular-Lorenz}  is a direct consequence of Theorem~\ref{thm-almost}. The reason is that when 
	$\Lambda$ is a Lorenz attractor of vector fields in a Baire residual subset $\mathcal{R}^r\subset\mathcal{X}^r(M^3), (r\in\mathbb{N}_{\geq 2})$ or $\Lambda$ is a singular hyperbolic attractor $\Lambda$ of vector fields in a Baire residual set $\mathcal{R}\subset\mathcal{X}^1(M)$,
	then $\Lambda$ is a homoclinic class such that each pair of periodic orbits are homoclinically related (cf.~\cite[Theorem 6.8]{AP} for Lorenz attractors and~\cite[Theorem B]{CY-Robust-attractor} for singular hyperbolic attractors) and $\mathcal{M}(\Phi,\Lambda)=\overline{\mathcal{M}_1(\Lambda)}=\overline{\mathcal{M}_{per}(\Lambda)}$ (cf.~\cite[Theorem A \& B]{STW}). 
\end{remark}

\subsection{Proof of Theorem \ref{Thm:irregular-Lorenz}}
Given  a vector field $X\in\mathscr{X}^1(M)$ and an invariant compact  set $\Lambda$.  Let $\mathcal{N}\subset \mathcal{M}(\Phi,\Lambda)$ be a convex set. We denote $\mathcal{N}_{erg}=\mathcal{M}_{erg}(\Phi,\Lambda)\cap \mathcal{N}.$
Let $d \in \mathbb{N}$ and  $(A, B) \in A(\Phi,\Lambda)^{d} \times A(\Phi,\Lambda)^{d}$ such that $B$ satisfies \eqref{equation-O}.
Denote
$$L_{A,B}^{\mathcal{N}}=\{\mathcal{P}_{A,B}(\mu):\mu\in \mathcal{N}\}.$$
For any $\alpha\in  L_{A,B}^{\mathcal{N}},$ denote  $$M_{A,B}^{\mathcal{N}}(\alpha)=\{\mu\in {\mathcal{N}}:\mathcal{P}_{A,B}(\mu)=\alpha\},\ M_{A,B}^{erg,\mathcal{N}}(\alpha)=\{\mu\in \mathcal{N}_{erg}:\mathcal{P}_{A,B}(\mu)=\alpha\}.$$
Let $\chi:\mathcal{M}(\Phi,\Lambda)\to \mathbb{R}$ be a continuous function. 
We define the pressure of $\chi$ with respect to $\mu$ by $P(\Phi,\chi,\mu)=h_\mu(\Phi)+\chi(\mu).$ 
For any $\alpha\in  L_{A,B}^\mathcal{N},$ denote $$H_{A,B}^\mathcal{N}(\Phi,\chi,\alpha)=\sup\{P(\phi,\chi,\mu):\mu\in M_{A,B}^\mathcal{N}(\alpha)\}.$$ In particular, when $\chi\equiv0,$ we write
$$H_{A,B}^\mathcal{N}(\Phi,\alpha)=H_{A,B}^\mathcal{N}(\Phi,0,\alpha)=\sup\{h_\mu(\Phi):\mu\in M_{A,B}^\mathcal{N}(\alpha)\}.$$

If we replace $\mathcal{M}(\Phi,\Lambda)$ by $\mathcal{N}$ in Section \ref{section-abstract conditions}-\ref{section-proof}, the results of Theorem \ref{thm-Almost-Additive}, Theorem \ref{thm-Almost-Additive2} and Remark \ref{remark-AB} also hold. In fact, the convexity of $\mathcal{M}(\Phi,\Lambda)$ is one core property in the proof  of Theorem \ref{thm-Almost-Additive}, Theorem \ref{thm-Almost-Additive2} and Remark \ref{remark-AB}. Since $\mathcal{N}$ is convex, then the arguments of Section \ref{section-abstract conditions} are also true. Here, we omit the proof.
\begin{theorem}\label{thm-Almost-Additive3}
	Given  a vector field $X\in\mathscr{X}^1(M)$ and an invariant compact  set $\Lambda$.  Assume that the metric entropy $\mathcal{M}(\Phi,\Lambda) \ni \mu \mapsto h_\mu(\Phi)$ is upper semi-continuous.   Let $d \in \mathbb{N}$ and $(A, B) \in A(\Phi,\Lambda)^{d} \times A(\Phi,\Lambda)^{d}$ 
	such that $B$ satisfies \eqref{equation-O}.  Let $\mathcal{N}\subset \mathcal{M}(\Phi,\Lambda)$ be a convex set. Assume that the following holds: 
	for any $F=\cov\{\mu_i\}_{i=1}^m\subseteq \mathcal{N},$ and any $\eta,  \zeta>0$,   there are compact invariant subsets $\Lambda_i\subseteq\Theta\subsetneq \Lambda$ such that for each $i\in\{1,2,\cdots,m\}$
	\begin{description}
		\item[(1)] for any $a\in \mathrm{Int}(\mathcal{P}_{A,B}(\mathcal{M}(\Phi,\Theta)))$ and any $\varepsilon>0,$ there exists an ergodic measure $\mu_a\in\mathcal{N}$ supported on $\Theta$ with $\mathcal{P}_{A,B}(\mu_a)=a$
		such that $|h_{\mu_a}(\Phi)-H(\Phi,a,\Theta)|<\varepsilon,$ where $H(\Phi,a,\Theta)=\sup\{h_\mu(\Phi):\mu\in \mathcal{M}(\Phi,\Theta) \text{ and }\mathcal{P}_{A,B}(\mu)=a\}.$
		\item[(2)] $\htop(\Lambda_i)>h_{\mu_i}(\Phi)-\eta.$
		\item[(3)] $d_H(K,  \mathcal{M}(\Phi,  \Theta))<\zeta$,   $d_H(\mu_i,  \mathcal{M}(\Phi,  \Lambda_i))<\zeta.$
	\end{description}
    Let $\chi:\mathcal{M}(\Phi,\Lambda)\to \mathbb{R}$ be a continuous function. Then 
	\begin{description}
		\item[(I)] For any $\alpha\in \mathrm{Int}(L_{A,B}^\mathcal{N}),$ any $\mu_0\in M_{A,B}^\mathcal{N}(\alpha)$ and any $\eta,  \zeta>0$, there is $\nu\in M_{A,B}^{erg,\mathcal{N}}(\alpha)$ such that $d^*(\nu,\mu_0)<\zeta$ and $|P(\Phi,\chi,\nu)-P(\Phi,\chi,\mu_0)|<\eta.$
	\end{description}
    If further $\{\mu\in \mathcal{N}:h_{\mu}(\Phi)=0\}$ is dense in $\mathcal{N},$ and $\chi:\mathcal{M}(\Phi,\Lambda)\to \mathbb{R}$ satisfies \eqref{equation-W} and \eqref{equation-AF}, then we have
    \begin{description}
    	\item[(II)] 
    	For any $\alpha\in \mathrm{Int}(L_{A,B}^\mathcal{N}),$ any $\mu_0\in M_{A,B}^\mathcal{N}(\alpha),$ any $\max_{\mu\in M_{A,B}^\mathcal{N}(\alpha)}\chi(\mu)\leq c\leq P(\phi,\chi,\mu_0)$ and any $\eta,  \zeta>0$, there is $\nu\in M_{A,B}^{erg,\mathcal{N}}(\alpha)$ such that $d^*(\nu,\mu_0)<\zeta$ and $|P(\Phi,\chi,\nu)-c|<\eta.$ 
    	\item[(III)]  For any $\alpha\in \mathrm{Int}(L_{A,B}^\mathcal{N})$ and $\max_{\mu\in M_{A,B}^\mathcal{N}(\alpha)}\chi(\mu)\leq c< H_{A,B}^\mathcal{N}(\Phi,\chi,\alpha),$ the set $\{\mu\in M_{A,B}^{erg,\mathcal{N}}(\alpha):P(\Phi,\chi,\mu)=c\}$ is residual in $\{\mu\in M_{A,B}^\mathcal{N}(\alpha):P(\Phi,\chi,\mu)\geq c\}.$ If further there is an invariant measure $\tilde{\mu}\in\mathcal{N}$ with $S_{\tilde{\mu}}=\Lambda$, then for any $\alpha\in \mathrm{Int}(L_{A,B}^\mathcal{N})$ and $\max_{\mu\in M_{A,B}^\mathcal{N}(\alpha)}\chi(\mu)\leq c< H_{A,B}^\mathcal{N}(\Phi,\chi,\alpha),$ the set $\{\mu\in M_{A,B}^{erg,\mathcal{N}}(\alpha):P(\Phi,\chi,\mu)=c,\ S_\mu=\Lambda\}$ is residual in $\{\mu\in M_{A,B}^\mathcal{N}(\alpha):P(\Phi,\chi,\mu)\geq c\}.$
    	\item[(IV)] The set $\{(\mathcal{P}_{A,B}(\mu), P(\Phi,\chi,\mu)):\mu\in \mathcal{N},\ \alpha=\mathcal{P}_{A,B}(\mu)\in\mathrm{Int}(L_{A,B}^\mathcal{N}),\ \max_{\mu\in M_{A,B}^\mathcal{N}(\alpha)}\chi(\mu)
    	\leq P(\Phi,\chi,\mu)< H_{A,B}^\mathcal{N}(\Phi,\chi,\alpha)\}$ coincides with $\{(\mathcal{P}_{A,B}(\mu), P(\Phi,\chi,\mu)):\mu\in \mathcal{N}_{erg},\ \alpha
    	=\mathcal{P}_{A,B}(\mu)\in\mathrm{Int}(L_{A,B}^\mathcal{N}), \max_{\mu\in M_{A,B}^\mathcal{N}(\alpha)}\chi(\mu)\leq P(\Phi,\chi,\mu)< H_{A,B}^\mathcal{N}(\Phi,\chi,\alpha)\}.$
    \end{description} 
\end{theorem}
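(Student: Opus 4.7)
The plan is to mirror the proofs of Theorem \ref{thm-Almost-Additive} and Theorem \ref{thm-Almost-Additive2} (together with the supporting Lemmas \ref{lemma-D}, \ref{lemma-B} and \ref{lemma-A}), replacing every occurrence of $\mathcal{M}(\Phi,\Lambda)$ by the convex subset $\mathcal{N}$. The guiding observation is that all uses of $\mathcal{M}(\Phi,\Lambda)$ in those arguments are either (a) as a reservoir of measures with prescribed values of $\mathcal{P}_{A,B}$, or (b) as a set on which one takes convex combinations of invariant measures; convexity of $\mathcal{N}$ together with the hypothesis $\alpha\in\mathrm{Int}(L_{A,B}^{\mathcal{N}})$ keeps both features intact. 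Continuity of the map in \eqref{equation-P} (which holds for almost additive families by \cite[Proposition 4]{BH21}) and upper semi-continuity of $h_{\mu}(\Phi)$ are used exactly as before.

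For item (I), I would repeat the proof of Theorem \ref{thm-Almost-Additive} after first upgrading Lemma \ref{lemma-D} to an $\mathcal{N}$-version: given $\alpha\in\mathrm{Int}(L_{A,B}^{\mathcal{N}})$ and $\mu_0\in M_{A,B}^{\mathcal{N}}(\alpha)$, choose $\nu_\xi\in\mathcal{N}$ with $\mathcal{P}_{A,B}(\nu_\xi)\in\alpha^{\xi}$ (available since $\alpha$ lies in the interior of $L_{A,B}^{\mathcal{N}}=\mathcal{P}_{A,B}(\mathcal{N})$) and form $\mu_\xi=\tau_\xi\mu_0+(1-\tau_\xi)\nu_\xi$ with $\tau_\xi$ close to $1$; convexity of $\mathcal{N}$ keeps $\mu_\xi\in\mathcal{N}$, upper semi-continuity of entropy keeps $h_{\mu_\xi}(\Phi)$ close to $h_{\mu_0}(\Phi)$, and Lemma \ref{lemma-E}(2) keeps $\mathcal{P}_{A,B}(\mu_\xi)\in\alpha^{\xi}$. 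Applying the multi-horseshoe hypothesis to the corner family $\{\mu_\xi\}$ produces $\Lambda_\xi\subseteq\Theta$; the variational principle combined with Corollary \ref{corollary-A} then shows, via the same chain of estimates as in the proof of Theorem \ref{thm-Almost-Additive}, that $H(\Phi,\alpha,\Theta)$ is $\tfrac{3\eta}{8}$-close to $h_{\mu_0}(\Phi)$. Hypothesis (1) finally supplies an ergodic $\nu\in\mathcal{N}$ supported on $\Theta$ with $\mathcal{P}_{A,B}(\nu)=\alpha$ and $|h_{\nu}(\Phi)-h_{\mu_0}(\Phi)|<\tfrac{\eta}{2}$; continuity of $\chi$ upgrades this to the pressure estimate $|P(\Phi,\chi,\nu)-P(\Phi,\chi,\mu_0)|<\eta$.

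For items (II)--(IV), I would reprove the analogues of Lemma \ref{lemma-A} and Theorem \ref{thm-Almost-Additive2} inside $\mathcal{N}$. Three ingredients transfer: $\mathcal{N}_{erg}=\mathcal{N}\cap\mathcal{M}_{erg}(\Phi,\Lambda)$ is a $G_\delta$ in $\mathcal{N}$ because $\mathcal{M}_{erg}(\Phi,\Lambda)$ is $G_\delta$ in $\mathcal{M}(\Phi,\Lambda)$ by \cite[Proposition 5.7]{DGS}; the full-support measures in $\mathcal{N}$ form a residual subset whenever the hypothesised $\tilde{\mu}\in\mathcal{N}$ with $S_{\tilde{\mu}}=\Lambda$ exists, by the convex-combination argument of Lemma \ref{lemma-B} applied inside $\mathcal{N}$; and the zero-entropy density in $\mathcal{N}$, combined with convexity, permits the sliding argument of Lemma \ref{lemma-A}(3) to run entirely within $\mathcal{N}$. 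With these ingredients, item (I) upgrades to density of ergodic measures with $P(\Phi,\chi,\mu)\geq c$ inside $\{\mu\in M_{A,B}^{\mathcal{N}}(\alpha):P(\Phi,\chi,\mu)\geq c\}$, and the three residual statements of item (III) follow just as in the proof of Theorem \ref{thm-Almost-Additive2} via the intersection of dense open sets $\{\mu:P(\Phi,\chi,\mu)<c+\tfrac{1}{n}\}$. Item (II) is then a direct consequence of item (I) together with the zero-entropy density, and item (IV) is a reformulation of item (III) with $c=P(\Phi,\chi,\mu)$.

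The main obstacle is essentially bookkeeping: at every invocation of a convex combination (in the $\mathcal{N}$-versions of Lemma \ref{lemma-D}, of Lemma \ref{lemma-A}(2)--(3), and at the end of the proof of Theorem \ref{thm-Almost-Additive2}) one must verify that both summands actually lie in $\mathcal{N}$, so that convexity keeps the combination in $\mathcal{N}$. This is automatic once one is careful to draw every corner measure from $\mathcal{N}$ (using $\alpha\in\mathrm{Int}(L_{A,B}^{\mathcal{N}})$) and to use hypothesis (1), which by assumption outputs ergodic measures in $\mathcal{N}$. A secondary subtlety is that $\mathrm{Int}(L_{A,B}^{\mathcal{N}})$ may be strictly smaller than $\mathrm{Int}(L_{A,B})$, so every appeal to "interior" must be the $\mathcal{N}$-interior; this is already enforced by the statement of the theorem.
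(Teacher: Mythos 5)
Your proposal is correct and coincides with the approach the paper takes: the paper explicitly omits the proof of Theorem \ref{thm-Almost-Additive3}, remarking that one obtains it by substituting the convex set $\mathcal{N}$ for $\mathcal{M}(\Phi,\Lambda)$ throughout the arguments of Sections \ref{section-abstract conditions}--\ref{section-proof}, since convexity is the only property of $\mathcal{M}(\Phi,\Lambda)$ those arguments rely on. You have simply supplied the bookkeeping the paper leaves to the reader (the $\mathcal{N}$-versions of Lemmas \ref{lemma-D}, \ref{lemma-B}, \ref{lemma-A}, the replacement of $\mathrm{Int}(L_{A,B})$ by $\mathrm{Int}(L_{A,B}^{\mathcal{N}})$, and the observation that hypothesis (1) outputs ergodic measures in $\mathcal{N}$), all of which is sound.
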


Denote by $\sing(\Lambda)$ the set of singularities for the vector field $X$  in $\Lambda$, by $\mathcal{M}_{per}(\Lambda)$ the set of periodic measures supported on $\Lambda$ and set   $$\mathcal{M}_1(\Lambda)=\left\{\mu\in \mathcal{M}(\Phi,\Lambda): \mu(\sing(\Lambda))=0 \right\} ~~\text{and}~~ \mathcal{M}_0(\Lambda)=\mathcal{M}_{erg}(\Phi,\Lambda)\cap \mathcal{M}_1(\Lambda).$$ 
\begin{proposition}\label{Pro:Horseshoe-approximation-inv2}\cite[Proposition 4.12]{STVW}
	Let $X\in\mathscr{X}^1(M)$ and $\Lambda$ be a singular hyperbolic homoclinic class of $X$. 
	Assume each pair of periodic orbits of $\Lambda$ are homoclinic related.
	Then  for each $\varepsilon>0$ and any $\mu\in \mathcal{M}_1(\Lambda)$, 
	there exist a basic set $\Lambda'\subset\Lambda$ and $\nu\in\mathcal{M}_{erg}(\Phi,\Lambda')$ so that
	\begin{equation*}
		d^*(\nu,\mu)<\varepsilon ~~~\text{and}~~~h_{\nu}(\Phi)>h_{\mu}(\Phi)-\varepsilon.
	\end{equation*}
\end{proposition}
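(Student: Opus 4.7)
The plan is to reduce to ergodic measures via ergodic decomposition, apply a Katok-type horseshoe theorem to each ergodic component, and then glue the resulting horseshoes into a single basic set by means of the homoclinic relations among periodic orbits of $\Lambda$.

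First, since $\mu(\sing(\Lambda))=0$, the ergodic decomposition of $\mu$ is concentrated on $\mathcal{M}_0(\Lambda)$. By upper semi-continuity of the entropy map (Lemma~\ref{lemma-semicontinuous}), its affinity on $\mathcal{M}(\Phi,\Lambda)$, and weak$^*$ approximation of the decomposition by a finite convex combination, I obtain ergodic measures $\mu_1,\dots,\mu_k\in\mathcal{M}_0(\Lambda)$ and weights $t_1,\dots,t_k>0$ with $\sum t_i=1$ such that $\tilde\mu:=\sum t_i\mu_i$ satisfies $d^*(\tilde\mu,\mu)<\varepsilon/3$ and $\sum t_i\,h_{\mu_i}(\Phi)>h_\mu(\Phi)-\varepsilon/3$.

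Second, each $\mu_i$ is Pesin-hyperbolic: as $\mu_i(\sing(\Lambda))=0$, the Oseledets splitting refines the singular-hyperbolic decomposition $E^{ss}\oplus E^{cu}$, giving negative exponents on $E^{ss}$ and (by sectional expansion) positive exponents on $E^{cu}$. A Katok-type horseshoe construction for singular-hyperbolic flows then produces a basic set $\Lambda_i\subset\Lambda$ supporting $\nu_i\in\mathcal{M}_{erg}(\Phi,\Lambda_i)$ with $d^*(\nu_i,\mu_i)<\varepsilon/(3k)$ and $h_{\nu_i}(\Phi)>h_{\mu_i}(\Phi)-\varepsilon/(3k)$. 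Since every periodic orbit of $\Lambda_i$ lies in $\Lambda$, the hypothesis says it is homoclinically related to every periodic orbit of $\Lambda_j$; thus any two basic sets $\Lambda_i,\Lambda_j$ are homoclinically related inside $\Lambda$.

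Third, I would exploit these homoclinic relations to enclose $\Lambda_1\cup\cdots\cup\Lambda_k$ in a larger basic set $\Lambda'\subset\Lambda$ (produced by taking the closure of the union together with the heteroclinic orbits and invoking the $\lambda$-lemma and Smale's horseshoe construction). On $\Lambda'$ one can then reuse the proof scheme of Theorem~\ref{Mainlemma-convex-by-horseshoe5} applied to the symbolic model of $\Lambda'$: code the transitions between the $\Lambda_i$ by a transitive subshift of finite type in which the frequency of symbols corresponding to $\Lambda_i$ equals $t_i$, and promote the resulting ergodic shift-invariant measure to an ergodic $\F$-invariant measure on the associated suspension, which pushes down to $\nu\in\mathcal{M}_{erg}(\Phi,\Lambda')$ with $d^*(\nu,\tilde\mu)<\varepsilon/3$ and $h_\nu(\Phi)>\sum t_i h_{\nu_i}(\Phi)-\varepsilon/3$. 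Chaining the three estimates gives $d^*(\nu,\mu)<\varepsilon$ and $h_\nu(\Phi)>h_\mu(\Phi)-\varepsilon$.

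The main obstacle is the third step: constructing a single basic set containing all the $\Lambda_i$ as subsystems and carrying an ergodic measure that simultaneously approximates a prescribed convex combination in weak$^*$ and nearly attains the weighted entropy average. In the singular-hyperbolic setting this requires more care than in Theorem~\ref{Mainlemma-convex-by-horseshoe5} because the ambient set $\Lambda$ is \emph{not} itself a basic set (the singularities prevent a direct Bowen-type symbolic coding of all of $\Lambda$), so one must work inside the non-singular locally maximal hyperbolic closure of the homoclinic orbits between the $\Lambda_i$; the assumption that every pair of periodic orbits of $\Lambda$ is homoclinically related is precisely what supplies the heteroclinic connections needed to make this closure into a single transitive basic set on which the shadowing/specification arguments of Section~\ref{Section:ED-HS-approximation} go through.
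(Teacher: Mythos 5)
The paper does not reprove this proposition; it is imported verbatim as \cite[Proposition 4.12]{STVW}, so there is no in-text argument to compare against. Judged on its own terms, your outline contains the right ingredients and is a plausible reconstruction of the STVW proof.

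Two substantive remarks. First, the ``main obstacle'' you flag at the end is in fact already dispatched by material the present paper imports: Lemma~\ref{Lem:large-horseshoe} (from the same source, \cite[Lemma 4.5]{STVW}) produces a single larger basic set $\Lambda'\subset\Lambda$ containing all of $\Lambda_1,\dots,\Lambda_k$ once one knows periodic orbits in the $\Lambda_i$ are pairwise homoclinically related, which is supplied by the hypothesis on $\Lambda$. Once you have $\Lambda'$, you do not need to re-run the symbolic coding of Theorem~\ref{Mainlemma-convex-by-horseshoe5}: $\sum t_i\nu_i$ is an invariant measure on the basic set $\Lambda'$, so the entropy-dense property on basic sets (a special case of Theorem~\ref{Mainlemma-convex-by-horseshoe5}, or the classical Sigmund/Katok argument) directly yields $\nu\in\mathcal{M}_{erg}(\Phi,\Lambda')$ weak$^*$-close to $\sum t_i\nu_i$ with $h_\nu$ close to $\sum t_i h_{\nu_i}$, and chaining estimates finishes. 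Indeed this is exactly the combination the paper itself uses in proving Theorem~\ref{Mainlemma-convex-by-horseshoe}, so your route and the paper's machinery are aligned.

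Second, a small imprecision: the Oseledets splitting does not give purely positive exponents on $E^{cu}$; the flow direction $\langle X\rangle\subset E^{cu}$ carries a zero exponent. The correct statement is that for a nonsingular ergodic $\mu_i$ the sectional expansion of $E^{cu}$ forces every Oseledets subspace of $E^{cu}$ other than $\langle X\rangle$ to have strictly positive exponent (a 2-plane containing a nonpositive direction and the flow direction would violate sectional expansion), which together with the uniform contraction on $E^{ss}$ is exactly the hyperbolicity (in the flow sense) needed to invoke a Katok-type horseshoe theorem. It is also worth noting explicitly in the first step that the choice of representatives $\mu_i$ within small-diameter cells of the ergodic decomposition should be made with $h_{\mu_i}$ at least the $\pi$-average of the entropy over that cell, so that $\sum t_i h_{\mu_i}\geq h_\mu$; upper semicontinuity alone does not deliver the lower bound you assert.
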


\begin{proposition}\label{Pro:Horseshoe-approximation-inv}\cite[Proposition 4.13]{STVW}
	Let $X\in\mathscr{X}^1(M)$ and $\Lambda$ be a singular hyperbolic homoclinic class of $X$. 
	Assume each pair of periodic orbits of $\Lambda$ are homoclinic related, and $\mathcal{M}(\Phi,\Lambda)=\overline{\mathcal{M}_1(\Lambda)}$.  
	Then  for each $\varepsilon>0$ and any $\mu\in \mathcal{M}(\Phi,\Lambda)$, 
	there exist a basic set $\Lambda'\subset\Lambda$ and $\nu\in\mathcal{M}_{erg}(\Phi,\Lambda')$ so that
	\begin{equation*}
		d^*(\nu,\mu)<\varepsilon ~~~\text{and}~~~h_{\nu}(\Phi)>h_{\mu}(\Phi)-\varepsilon.
	\end{equation*}
\end{proposition}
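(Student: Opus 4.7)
The plan is to reduce to Proposition \ref{Pro:Horseshoe-approximation-inv2} by splitting off the part of $\mu$ carried by singularities, which Proposition \ref{Pro:Horseshoe-approximation-inv2} cannot see directly. Since $\Lambda$ is singular hyperbolic with hyperbolic singularities, $\sing(\Lambda)$ is a finite invariant set of fixed points, each of which carries only the Dirac mass as an ergodic measure, with zero metric entropy. Applying the ergodic decomposition to $\mu\in\mathcal{M}(\Phi,\Lambda)$ and grouping the components supported on $\sing(\Lambda)$ yields a convex decomposition
\[
\mu \;=\; \alpha\mu_{sing} + (1-\alpha)\mu_{ns},\qquad \alpha=\mu(\sing(\Lambda)),
\]
with $\mu_{sing}$ a convex combination of Dirac masses at singularities and, when $\alpha<1$, $\mu_{ns}\in\mathcal{M}_1(\Lambda)$. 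By affinity of the metric entropy on $\mathcal{M}(\Phi,\Lambda)$ together with $h_{\mu_{sing}}(\Phi)=0$, this gives $h_\mu(\Phi)=(1-\alpha)h_{\mu_{ns}}(\Phi)$. (The edge case $\alpha=1$ follows from the same argument below applied to any approximant of $\mu_{sing}$ in $\mathcal{M}_1(\Lambda)$, since then $h_\mu(\Phi)=0$.)

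Fix $\varepsilon>0$. The idea is to perturb only the singular (zero-entropy) factor. Using the hypothesis $\mathcal{M}(\Phi,\Lambda)=\overline{\mathcal{M}_1(\Lambda)}$, I pick $\mu'_{sing}\in\mathcal{M}_1(\Lambda)$ with $d^*(\mu'_{sing},\mu_{sing})$ so small that the reassembled measure
\[
\mu' \;:=\; \alpha\mu'_{sing} + (1-\alpha)\mu_{ns}
\]
satisfies $d^*(\mu',\mu)<\varepsilon/2$; such a choice exists by joint continuity of convex combinations in the weak-$*$ topology. Since $\mathcal{M}_1(\Lambda)$ is convex, $\mu'\in\mathcal{M}_1(\Lambda)$, and affinity of entropy yields
\[
h_{\mu'}(\Phi)=\alpha h_{\mu'_{sing}}(\Phi)+(1-\alpha)h_{\mu_{ns}}(\Phi)\;\ge\;(1-\alpha)h_{\mu_{ns}}(\Phi)=h_\mu(\Phi).
\]

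Now I apply Proposition \ref{Pro:Horseshoe-approximation-inv2} to $\mu'\in\mathcal{M}_1(\Lambda)$ with error $\varepsilon/2$, obtaining a basic set $\Lambda'\subset\Lambda$ and $\nu\in\mathcal{M}_{erg}(\Phi,\Lambda')$ with $d^*(\nu,\mu')<\varepsilon/2$ and $h_\nu(\Phi)>h_{\mu'}(\Phi)-\varepsilon/2\ge h_\mu(\Phi)-\varepsilon/2$. The triangle inequality gives $d^*(\nu,\mu)\le d^*(\nu,\mu')+d^*(\mu',\mu)<\varepsilon$, and the entropy bound is strictly better than $h_\mu(\Phi)-\varepsilon$, which finishes the argument.

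The main obstacle, and the reason for the split-and-reassemble maneuver, is that a direct application of $\mathcal{M}(\Phi,\Lambda)=\overline{\mathcal{M}_1(\Lambda)}$ to approximate $\mu$ by some $\mu_n\in\mathcal{M}_1(\Lambda)$ only controls entropy from above via upper semi-continuity (Lemma \ref{lemma-semicontinuous}), giving $\limsup_n h_{\mu_n}(\Phi)\le h_\mu(\Phi)$ and no lower bound. By perturbing only the zero-entropy singular factor and invoking affinity, one converts the hypothesis into the free lower bound $h_{\mu'}(\Phi)\ge h_\mu(\Phi)$, after which Proposition \ref{Pro:Horseshoe-approximation-inv2} is the right black box. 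This is also exactly where the structural assumption that every pair of periodic orbits in $\Lambda$ is homoclinically related is used, inside Proposition \ref{Pro:Horseshoe-approximation-inv2}.
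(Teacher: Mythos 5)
Your proof is correct, and it is essentially the natural and efficient reduction of this proposition to Proposition~\ref{Pro:Horseshoe-approximation-inv2}. The paper itself only cites \cite[Proposition 4.13]{STVW} without giving its own argument, so there is no in-paper proof to compare against line by line; your argument supplies a clean self-contained derivation from the cited Proposition~\ref{Pro:Horseshoe-approximation-inv2}.

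The key points all check out: $\sing(\Lambda)$ is a finite invariant set of fixed points, so the restriction/normalization decomposition $\mu=\alpha\mu_{sing}+(1-\alpha)\mu_{ns}$ produces two invariant probability measures, with $\mu_{ns}\in\mathcal{M}_1(\Lambda)$; entropy is affine, and $h_{\mu_{sing}}(\Phi)=0$; $\mathcal{M}_1(\Lambda)$ is convex, so $\mu'=\alpha\mu'_{sing}+(1-\alpha)\mu_{ns}\in\mathcal{M}_1(\Lambda)$; and joint weak-$*$ continuity of convex combinations lets you make $d^*(\mu',\mu)<\varepsilon/2$. The affinity inequality $h_{\mu'}(\Phi)\ge h_\mu(\Phi)$ is exactly the lower bound that a naive approximation of $\mu$ by arbitrary elements of $\mathcal{M}_1(\Lambda)$ would fail to provide (as you correctly note, upper semi-continuity from Lemma~\ref{lemma-semicontinuous} only helps in the other direction). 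Your treatment of the edge case $\alpha=1$ is also fine since $h_\mu(\Phi)=0$ there and only the weak-$*$ approximation is needed.
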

\begin{lemma}\label{Lem:large-horseshoe}\cite[Lemma 4.5]{STVW}
	Let $\Lambda_1$ and $\Lambda_2$ be two basic sets of $X\in\mathcal{X}^1(M)$. Assume there exists hyperbolic periodic points $p_1\in\Lambda_1$ and $p_2\in\Lambda_2$ such that $\orb(p_1)$ and $\orb(p_2)$ are homoclinically related. Then there exists a larger basic set $\Lambda$ that contains both $\Lambda_1$ and $\Lambda_2$.
\end{lemma}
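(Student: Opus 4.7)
The plan is to construct $\Lambda$ by thickening $\Lambda_1 \cup \Lambda_2$ along their heteroclinic connections and then taking the closure of all orbits shadowing pseudo-orbits that alternate between the two basic sets. Since $\orb(p_1)$ and $\orb(p_2)$ are homoclinically related, by definition there exist transverse heteroclinic orbits $\orb(q_1) \subset W^u(\orb(p_1)) \pitchfork W^s(\orb(p_2))$ and $\orb(q_2) \subset W^u(\orb(p_2)) \pitchfork W^s(\orb(p_1))$.

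First I would reduce the problem to a discrete-time one via a Poincar\'e section. Choose a $C^1$ cross-section $\Sigma$ transverse to $X$ that meets both periodic orbits and both heteroclinic orbits, and consider the first-return map $P\colon \Sigma \to \Sigma$. Then each $K_i := \Lambda_i \cap \Sigma$ is a hyperbolic basic set for a suitable iterate of $P$, with periodic points $\bar p_i$ whose $P$-orbits remain homoclinically related since transversality is preserved under the Poincar\'e reduction.

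Next I would invoke the classical horseshoe-combining argument for diffeomorphisms. Pick Markov partitions for $K_1$ and $K_2$. Using the $\lambda$-lemma, for large enough $n$ the $n$-th image under $P$ of a small unstable rectangle of $K_1$ near $\bar p_1$ crosses an unstable rectangle of $K_2$ near $\bar p_2$ along the orbit of $q_1$, and symmetrically along $q_2$. Adjoining two such transition rectangles to the union of the two Markov partitions yields a new Markov partition whose associated subshift of finite type is transitive, because $\bar p_1$ and $\bar p_2$ now lie in a common irreducible component of the transition graph. The resulting invariant set $\Xi\subset \Sigma$ is then a locally maximal, transitive hyperbolic set for $P$ containing $K_1\cup K_2$, with totally disconnected intersection with $\Sigma$ (inherited from the Cantor-like structure of the $K_i$ together with the countably many bridging orbits). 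Suspending $\Xi$ back to the flow via the return-time function yields the desired basic set $\Lambda\supset \Lambda_1 \cup \Lambda_2$.

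The main technical obstacle is verifying uniform hyperbolicity of the combined set. The expansion and contraction rates on $K_1$ and $K_2$ may differ, so the cone fields do not obviously match across the transition rectangles. This is handled by the cone-field criterion: the $\lambda$-lemma guarantees that after sufficiently many iterates along each heteroclinic orbit the image of the unstable cone over $K_1$ is strictly contained in the unstable cone over $K_2$, and dually for the stable cones under $P^{-1}$; by choosing the transition rectangles thin and long enough along $\orb(q_1)$ and $\orb(q_2)$, cone preservation and uniform expansion/contraction persist on all of $\Xi$, so $\Xi$ is indeed uniformly hyperbolic. Transitivity of $\Xi$, and hence of $\Lambda$, follows from density of the $P$-orbits of periodic points in $K_1 \cup K_2$ combined with the bridging orbits, completing the verification that $\Lambda$ satisfies Definition \ref{Def:horseshoe}.
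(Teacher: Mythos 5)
The paper does not prove this lemma; it is imported verbatim from \cite[Lemma 4.5]{STVW}, so there is no internal proof against which to compare. Evaluated on its own merits, your sketch captures the classical heuristic (combine Markov structures through $\lambda$-lemma transition rectangles along the two transverse heteroclinic orbits, then reconcile cone fields), but it contains a genuine gap in the reduction to discrete time. You assert that one can ``choose a $C^1$ cross-section $\Sigma$ transverse to $X$ that meets both periodic orbits and both heteroclinic orbits'' and then take $K_i=\Lambda_i\cap\Sigma$ to be ``a hyperbolic basic set for a suitable iterate of $P$.'' Neither claim is correct for flows: a hyperbolic basic set of a flow does not in general admit a single global cross-section through which all relevant orbits pass, and even if a section hits $\Lambda_i$, the return time to $\Sigma$ is nonconstant, so $\Lambda_i\cap\Sigma$ is not invariant under any fixed power $P^k$ --- it is only invariant under the first-return map with its variable ceiling. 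The correct framework (used in Section~2.3 of the paper, following Bowen) is a \emph{family} of local cross-sections $R_1,\dots,R_k$ with a transfer map $T$, and the Markov structure of a basic set is expressed relative to this family, not to a single $\Sigma$.

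Once you pass to Bowen's proper-family setting, the rest of your plan (adjoin transition rectangles along $\orb(q_1)$ and $\orb(q_2)$, check cone preservation via the $\lambda$-lemma, read off transitivity from irreducibility of the enlarged transition graph, and suspend back) is sound, though an alternative and somewhat cleaner route is available: the compact set $K=\Lambda_1\cup\Lambda_2\cup\orb(q_1)\cup\orb(q_2)$ is already hyperbolic (the splittings extend across the transverse heteroclinic connections), and a small isolating neighborhood of $K$ together with the shadowing lemma and the spectral decomposition yields a transitive locally maximal hyperbolic set containing $\Lambda_1\cup\Lambda_2$ directly, without constructing a new Markov partition by hand. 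Whichever route you take, the single-section/iterate step needs to be replaced.
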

Now we state a result on 'multi-horseshoe' dense property of singular hyperbolic homoclinic classes.
\begin{theorem}\label{Mainlemma-convex-by-horseshoe}
	Let $X\in\mathscr{X}^1(M)$ and $\Lambda$ be a singular hyperbolic homoclinic class of $X$. 
	Assume each pair of periodic orbits of $\Lambda$ are homoclinic related.    Then $\Lambda$ satisfies the 'multi-horseshoe' dense property on $\mathcal{M}_1(\Lambda)$. Moreover, if $\mathcal{M}(\Phi,\Lambda)=\overline{\mathcal{M}_1(\Lambda)}=\overline{\mathcal{M}_{per}(\Lambda)}$.    Then $\Lambda$ satisfies the 'multi-horseshoe' dense property.
\end{theorem}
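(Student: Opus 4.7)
The plan is to reduce Theorem \ref{Mainlemma-convex-by-horseshoe} to the basic-set version already proved in Theorem \ref{Mainlemma-convex-by-horseshoe5}, by a two-stage approximation: first approximate each extremal measure $\mu_i$ by an ergodic measure supported on a basic set; then use the homoclinic relatedness of periodic orbits to merge these basic sets into a single larger basic set, to which Theorem \ref{Mainlemma-convex-by-horseshoe5} can be directly applied.

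Fix $F=\mathrm{cov}\{\mu_i\}_{i=1}^m$ (taken in $\mathcal{M}_1(\Lambda)$ for the first assertion, in $\mathcal{M}(\Phi,\Lambda)$ for the second) together with $\eta,\zeta>0$. By uniform continuity of the convex-combination map on the compact simplex, choose $\zeta'\in(0,\zeta/2)$ so that $d^*(\mu_i,\nu_i)<\zeta'$ for every $i$ forces $d_H(\mathrm{cov}\{\mu_i\}_{i=1}^m,\mathrm{cov}\{\nu_i\}_{i=1}^m)<\zeta/2$; this uses translation invariance of $d^*$. \emph{Step 1} (ergodic horseshoe approximation): for each $i$, apply Proposition \ref{Pro:Horseshoe-approximation-inv2} in the first case, or Proposition \ref{Pro:Horseshoe-approximation-inv} in the second case, to obtain a basic set $\Lambda'_i\subseteq\Lambda$ and an ergodic measure $\nu_i\in\mathcal{M}_{erg}(\Phi,\Lambda'_i)$ with
$$d^*(\nu_i,\mu_i)<\zeta'\quad\text{and}\quad h_{\nu_i}(\Phi)>h_{\mu_i}(\Phi)-\eta/2.$$

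\emph{Step 2} (merging the basic sets): pick a hyperbolic periodic point $p_i\in\Lambda'_i$ for each $i$. By hypothesis every pair of periodic orbits of $\Lambda$ is homoclinically related, so $\orb(p_i)$ and $\orb(p_j)$ are homoclinically related for all $i,j$. Apply Lemma \ref{Lem:large-horseshoe} iteratively: merge $\Lambda'_1,\Lambda'_2$ into a basic set $\tilde{\Lambda}_2\supseteq\Lambda'_1\cup\Lambda'_2$, then merge $\tilde{\Lambda}_2$ with $\Lambda'_3$, and so on, to produce a single basic set $\tilde{\Lambda}\subseteq\Lambda$ containing every $\Lambda'_i$ and hence supporting every $\nu_i$. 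Since $\Lambda$ is singular hyperbolic and therefore contains a singularity while $\tilde{\Lambda}$ is uniformly hyperbolic, the inclusion $\tilde{\Lambda}\subsetneq\Lambda$ is strict.

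\emph{Step 3} (apply the basic-set version and assemble): apply Theorem \ref{Mainlemma-convex-by-horseshoe5} to the basic set $\tilde{\Lambda}$, with the family $F':=\mathrm{cov}\{\nu_i\}_{i=1}^m\subseteq\mathcal{M}(\Phi,\tilde{\Lambda})$ and parameters $\eta/2$ and $\zeta/2$. This yields horseshoes $\Lambda_i\subseteq\Theta\subsetneq\tilde{\Lambda}$ with
$$\htop(\Lambda_i)>h_{\nu_i}(\Phi)-\eta/2,\quad d_H(F',\mathcal{M}(\Phi,\Theta))<\zeta/2,\quad d_H(\nu_i,\mathcal{M}(\Phi,\Lambda_i))<\zeta/2.$$
Chaining with Steps 1--2 gives $\htop(\Lambda_i)>h_{\mu_i}(\Phi)-\eta$, and by the triangle inequality for $d_H$ together with the choice of $\zeta'$, also $d_H(F,\mathcal{M}(\Phi,\Theta))<\zeta$ and $d_H(\mu_i,\mathcal{M}(\Phi,\Lambda_i))<\zeta$. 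The strict inclusion $\Theta\subsetneq\Lambda$ follows from $\Theta\subsetneq\tilde{\Lambda}\subsetneq\Lambda$, and $\Lambda_i,\Theta$ remain horseshoes for $\phi_t$ since this is an intrinsic property of the invariant set.

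\emph{Main obstacle.} The conceptual heart of the proof is Step 2: it is only here that the assumption that every pair of periodic orbits of $\Lambda$ be homoclinically related is used, and it is indispensable for assembling the finitely many disjoint basic sets produced in Step 1 into a single hyperbolic piece on which Theorem \ref{Mainlemma-convex-by-horseshoe5} applies. A secondary but necessary technicality is the compatibility of the Hausdorff metric $d_H$ with the convex-hull operation, which is guaranteed by the translation invariance of $d^*$ and the compactness of the $(m-1)$-simplex of coefficients.
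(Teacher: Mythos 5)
Your proof is correct and follows essentially the same three-step argument as the paper: approximate each $\mu_i$ by an ergodic measure $\nu_i$ on a basic set via Proposition \ref{Pro:Horseshoe-approximation-inv2} (resp.\ \ref{Pro:Horseshoe-approximation-inv}), merge the resulting basic sets into a single basic set $\tilde\Lambda$ via Lemma \ref{Lem:large-horseshoe} using homoclinic relatedness, and apply Theorem \ref{Mainlemma-convex-by-horseshoe5} to $\tilde\Lambda$ with $\tilde F=\cov\{\nu_i\}_{i=1}^m$ and the halved parameters. The extra care you add (the choice of $\zeta'$ to control $d_H$ between convex hulls, and the explicit note that $\tilde\Lambda\subsetneq\Lambda$ because $\Lambda$ contains a singularity while $\tilde\Lambda$ is uniformly hyperbolic) only makes explicit what the paper leaves implicit.
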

\begin{proof}
	Fix $F=\cov\{\mu_i\}_{i=1}^m\subseteq \mathcal{M}_1(\Lambda)(\mathcal{M}(\Phi,\Lambda)),$ and $\eta,  \zeta>0.$ By Proposition \ref{Pro:Horseshoe-approximation-inv2} (Proposition \ref{Pro:Horseshoe-approximation-inv} ) for each $1\leq i\leq m$ there exist a basic set $\Lambda'_i\subset\Lambda$ and $\nu_i\in\mathcal{M}_{erg}(\Phi,\Lambda'_i)$ so that
	\begin{equation*}
		d^*(\nu_i,\mu_i)<\frac{\zeta}{2} ~~~\text{and}~~~h_{\nu_i}(\Phi)>h_{\mu_i}(\Phi)-\frac{\eta}{2}.
	\end{equation*}
	By Lemma \ref{Lem:large-horseshoe} there exists a larger basic set $\tilde{\Lambda}$ that contains every $\Lambda'_i$.
	Apply Theorem \ref{Mainlemma-convex-by-horseshoe5} to $\tilde{\Lambda},$ $\tilde{F}=\cov\{\nu_i\}_{i=1}^m\subseteq \mathcal{M}(\Phi,\tilde{\Lambda})$ and $\frac{\eta}{2},$ $\frac{\zeta}{2},$ we complete the proof.
\end{proof}
Now we show that the results of Theorem \ref{thm-Almost-Additive3} hold for singular hyperbolic homoclinic classes.
\begin{theorem}\label{thm-almost3}
	Let $X\in\mathscr{X}^1(M)$ and $\Lambda$ be a singular hyperbolic homoclinic class of $X$. 
	Assume each pair of periodic orbits of $\Lambda$ are homoclinic related.  Let $\chi:\mathcal{M}(\Phi,\Lambda)\to \mathbb{R}$ be a continuous function satisfying \eqref{equation-W} and \eqref{equation-AF}.  Let $d \in \mathbb{N}$ and $(A, B) \in A(\Phi,\Lambda)^{d} \times A(\Phi,\Lambda)^{d}$ satisfying \eqref{equation-UV}. 
	Then:
	\begin{description}
		\item[(I)] For any $\alpha\in \mathrm{Int}(L_{A,B}^{\mathcal{M}_1(\Lambda)}),$ any $\mu_0\in M_{A,B}^{\mathcal{M}_1(\Lambda)}(\alpha),$ any $\max_{\mu\in M_{A,B}^{\mathcal{M}_1(\Lambda)}(\alpha)}\chi(\mu)\leq c\leq P(\phi,\chi,\mu_0)$ and any $\eta,  \zeta>0$, there is $\nu\in M_{A,B}^{erg,{\mathcal{M}_1(\Lambda)}}(\alpha)$ such that $d^*(\nu,\mu_0)<\zeta$ and $|P(\Phi,\chi,\nu)-c|<\eta.$ 
		\item[(II)] For any $\alpha\in \mathrm{Int}(L_{A,B}^{\mathcal{M}_1(\Lambda)})$ and $\max_{\mu\in M_{A,B}^{\mathcal{M}_1(\Lambda)}(\alpha)}\chi(\mu)\leq c< H_{A,B}^{\mathcal{M}_1(\Lambda)}(\Phi,\chi,\alpha),$ the set $\{\mu\in M_{A,B}^{erg,{\mathcal{M}_1(\Lambda)}}(\alpha):P(\Phi,\chi,\mu)=c\}$ is residual in $\{\mu\in M_{A,B}^{\mathcal{M}_1(\Lambda)}(\alpha):P(\Phi,\chi,\mu)\geq c\}.$
		\item[(III)] $\{(\mathcal{P}_{A,B}(\mu), P(\Phi,\chi,\mu)):\mu\in {\mathcal{M}_1(\Lambda)},\ \alpha=\mathcal{P}_{A,B}(\mu)\in\mathrm{Int}(L_{A,B}^{\mathcal{M}_1(\Lambda)}),\ \max_{\mu\in M_{A,B}^{\mathcal{M}_1(\Lambda)}(\alpha)}\chi(\mu)\leq P(\Phi,\chi,\mu)< H_{A,B}^{\mathcal{M}_1(\Lambda)}(\Phi,\chi,\alpha)\}$ coincides with $\{(\mathcal{P}_{A,B}(\mu), P(\Phi,\chi,\mu)):\mu\in {\mathcal{M}_0(\Lambda)},\ \alpha=\mathcal{P}_{A,B}(\mu)\in\mathrm{Int}(L_{A,B}^{\mathcal{M}_1(\Lambda)}), \max_{\mu\in M_{A,B}^{\mathcal{M}_1(\Lambda)}(\alpha)}\chi(\mu)\leq P(\Phi,\chi,\mu)< H_{A,B}^{\mathcal{M}_1(\Lambda)}(\Phi,\chi,\alpha)\}.$  
	\end{description}
\end{theorem}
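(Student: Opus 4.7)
The plan is to derive Theorem~\ref{thm-almost3} by applying Theorem~\ref{thm-Almost-Additive3} with the convex set $\mathcal{N} = \mathcal{M}_1(\Lambda)$. First I would verify the structural hypotheses: $\mathcal{M}_1(\Lambda)$ is convex because the condition $\mu(\sing(\Lambda))=0$ is preserved under convex combinations, and the upper semi-continuity of $\mu \mapsto h_\mu(\Phi)$ on $\mathcal{M}(\Phi,\Lambda)$ follows from Lemma~\ref{lemma-semicontinuous}, since singular hyperbolicity is a particular case of sectional hyperbolicity and all singularities of $X$ in $\Lambda$ are hyperbolic.

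Next I would verify the multi-horseshoe approximation hypothesis demanded by Theorem~\ref{thm-Almost-Additive3}. Given any $F = \cov\{\mu_i\}_{i=1}^m \subseteq \mathcal{M}_1(\Lambda)$ and any $\eta,\zeta > 0$, Theorem~\ref{Mainlemma-convex-by-horseshoe} produces horseshoes $\Lambda_i \subseteq \Theta \subsetneq \Lambda$ satisfying items (2) and (3) of the hypothesis automatically. Since each horseshoe is by Definition~\ref{definition-hyperbolic} a basic set and therefore contains no singularity, every invariant measure supported on $\Theta$ lies in $\mathcal{M}_1(\Lambda)$; the ergodic measures with nearly maximal conditional entropy required by item (1) of the hypothesis are then furnished by the conditional variational principle Theorem~\ref{condition-principle} applied to the basic set $\Theta$. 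Conclusion (I) of Theorem~\ref{thm-almost3} is then immediate from item (I) of Theorem~\ref{thm-Almost-Additive3}.

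For conclusions (II) and (III), the additional input is the density of zero-entropy measures in $\mathcal{M}_1(\Lambda)$. I would obtain this in two steps: given $\mu \in \mathcal{M}_1(\Lambda)$ and $\varepsilon > 0$, Proposition~\ref{Pro:Horseshoe-approximation-inv2} provides a basic set $\Lambda' \subset \Lambda$ and an ergodic measure $\nu \in \mathcal{M}_{erg}(\Phi,\Lambda')$ within weak$^*$-distance $\varepsilon/2$ of $\mu$; then, since periodic measures are dense in $\mathcal{M}(\Phi,\Lambda')$ for any basic set, have zero entropy, and lie in $\mathcal{M}_1(\Lambda)$ (basic sets contain no singularity), a sufficiently fine periodic approximation of $\nu$ is a zero-entropy measure in $\mathcal{M}_1(\Lambda)$ within distance $\varepsilon$ of $\mu$. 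With this density established, items (III) and (IV) of Theorem~\ref{thm-Almost-Additive3} yield conclusions (II) and (III) of Theorem~\ref{thm-almost3} respectively.

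The main technical point, rather than a genuine obstacle, is ensuring that the ergodic measures produced by the abstract machinery actually belong to $\mathcal{M}_1(\Lambda)$ and not merely to $\mathcal{M}(\Phi,\Lambda)$. This is handled uniformly by the observation that every horseshoe produced by Theorem~\ref{Mainlemma-convex-by-horseshoe} is a basic set disjoint from $\sing(\Lambda)$, so all measures it carries avoid the singular set automatically. This is precisely why the passage from the basic-set case to the singular hyperbolic case succeeds for the restricted class $\mathcal{M}_1(\Lambda)$ without any further compatibility assumption, and also why the statement of Theorem~\ref{thm-almost3} omits the full-support conclusion that is present in the basic-set case (Theorem~\ref{thm-almost2}(II)), which would require a full-support measure inside $\mathcal{M}_1(\Lambda)$ and is therefore deferred to the setting $\mathcal{M}(\Phi,\Lambda) = \overline{\mathcal{M}_1(\Lambda)} = \overline{\mathcal{M}_{per}(\Lambda)}$ of Theorem~\ref{Thm:irregular-Lorenz}.
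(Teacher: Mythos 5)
Your proposal is correct and follows essentially the same route as the paper: apply Theorem~\ref{thm-Almost-Additive3} with $\mathcal{N}=\mathcal{M}_1(\Lambda)$, verifying upper semi-continuity of entropy via Lemma~\ref{lemma-semicontinuous}, the multi-horseshoe hypothesis and density of zero-entropy measures via Theorem~\ref{Mainlemma-convex-by-horseshoe}, and item~(1) via the conditional variational principle Theorem~\ref{condition-principle} on the horseshoe $\Theta$, using that $\mathcal{M}(\Phi,\Theta)\subset\mathcal{M}_1(\Lambda)$ since horseshoes avoid singularities. The only cosmetic difference is that you derive the zero-entropy density directly from Proposition~\ref{Pro:Horseshoe-approximation-inv2} plus periodic approximation inside a basic set, while the paper cites Theorem~\ref{Mainlemma-convex-by-horseshoe} for the same conclusion; these are the same mechanism.
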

\begin{proof}
	Note that since $\Lambda$ is a singular hyperbolic homoclinic class, the vector field $X$ satisfies the star condition in a neighborhood of $\Lambda$. More precisely, there exist a neighborhood $U$ of $\Lambda$ and a $C^1$-neighborhood $\mathcal{U}$ of $X$ in $\mathscr{X}^1(M)$ such that every critical element contained in $U$ associated to a vector field $Y\in\mathcal{U}$  is hyperbolic. Then by Lemma \ref{lemma-semicontinuous} 
	the entropy function is upper semi-continuous. 
	Note that $\mathcal{M}(\Phi,\Theta)\subset \mathcal{M}_1(\Lambda)$ if $\Theta\subset \Lambda$ is a horseshoe.
	Since the metric entropy of periodic measure is zero, then $\{\mu\in  \mathcal{M}_1(\Lambda):h_{\mu}(\Phi)=0\}$ is dense in $ \mathcal{M}_1(\Lambda)$ by Theorem \ref{Mainlemma-convex-by-horseshoe}.
	So we complete the proof by Theorem \ref{condition-principle}, Theorem \ref{thm-Almost-Additive3} and  Theorem \ref{Mainlemma-convex-by-horseshoe}.
\end{proof}
Combining with Theorem \ref{Pro:lorenz-robust}, the results of Theorem \ref{thm-almost3} hold for geometric Lorenz attractor 
\begin{corollary}
	Let $r\in\mathbb{N}_{\geq2}\cup\{\infty\}$  and $X\in\mathscr{X}^r(M^3)$. If $X$ exhibits a geometric Lorenz attractor $\Lambda$, then the results of Theorem \ref{thm-almost3} hold for $\Lambda.$
\end{corollary}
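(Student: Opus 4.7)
The plan is to observe that this corollary is a direct consequence of Theorem~\ref{thm-almost3} once we verify that the hypotheses of that theorem are automatically satisfied for a geometric Lorenz attractor. So the argument will be a short verification step rather than a fresh construction.

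First, I would invoke Proposition~\ref{Pro:lorenz-robust}, which supplies exactly the two structural facts required to feed into Theorem~\ref{thm-almost3}: namely, that the geometric Lorenz attractor $\Lambda$ is a singular hyperbolic homoclinic class, and that every pair of periodic orbits in $\Lambda$ is homoclinically related. These are precisely the standing hypotheses of Theorem~\ref{thm-almost3}. Since Theorem~\ref{thm-almost3} is stated for arbitrary $X\in\mathscr{X}^1(M)$ and singular hyperbolic homoclinic classes with homoclinically related periodic orbits, and since $\mathscr{X}^r(M^3)\subset\mathscr{X}^1(M^3)$ for $r\geq 2$, the ambient regularity assumption $r\in\mathbb{N}_{\geq 2}\cup\{\infty\}$ is compatible and in fact allows us to use the $C^2$-robustness part of Proposition~\ref{Pro:lorenz-robust}.

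Next, I would fix any continuous function $\chi\colon\mathcal{M}(\Phi,\Lambda)\to\RR$ satisfying \eqref{equation-W} and \eqref{equation-AF}, any $d\in\NN$, and any pair $(A,B)\in A(\Phi,\Lambda)^d\times A(\Phi,\Lambda)^d$ satisfying \eqref{equation-UV}. With the hypotheses of Theorem~\ref{thm-almost3} verified via Proposition~\ref{Pro:lorenz-robust}, I then apply items (I), (II) and (III) of Theorem~\ref{thm-almost3} verbatim to $\Lambda$, which yields the three conclusions claimed by the corollary.

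There is essentially no obstacle here: the whole content of the corollary is that the abstract framework developed in Theorem~\ref{thm-almost3} applies to the geometrically concrete class of Lorenz attractors, and this reduction is handled entirely by Proposition~\ref{Pro:lorenz-robust}. The only thing to remain attentive to is to explicitly note that the regularity $r\geq 2$ is used only to access the robustness statement of Proposition~\ref{Pro:lorenz-robust}; the dynamical consequences quoted from Theorem~\ref{thm-almost3} are purely $C^1$-statements about $\Lambda$. Hence the proof reduces to a single line invoking Proposition~\ref{Pro:lorenz-robust} followed by a direct citation of Theorem~\ref{thm-almost3}.
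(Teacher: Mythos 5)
Your proposal matches the paper's argument exactly: the paper too derives this corollary in one line by combining Proposition~\ref{Pro:lorenz-robust} (specifically its ``Moreover'' clause, which furnishes the hypotheses of Theorem~\ref{thm-almost3} --- that $\Lambda$ is a singular hyperbolic homoclinic class with all pairs of periodic orbits homoclinically related) with Theorem~\ref{thm-almost3} applied to $\Lambda$. Your only slight misstatement is the aside attributing the role of $r\ge 2$ to the robustness clause; in fact $r\ge 2$ is simply the standing hypothesis under which Proposition~\ref{Pro:lorenz-robust} establishes the homoclinic-class and homoclinic-relatedness facts (the robustness statement itself is not invoked here), but this does not affect the validity of the argument.
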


If further we have $\mathcal{M}(\Phi,\Lambda)=\overline{\mathcal{M}_1(\Lambda)}=\overline{\mathcal{M}_{per}(\Lambda)}$,   then using Theorem \ref{condition-principle}, Theorem \ref{thm-Almost-Additive}, Theorem \ref{thm-Almost-Additive2},  Remark \ref{remark-AB} , Theorem \ref{Mainlemma-convex-by-horseshoe} and Lemma \ref{lemma-full-support},  we have the following result for singular hyperbolic homoclinic classes.
\begin{theorem}\label{thm-almost}
	Let $X\in\mathscr{X}^1(M)$ and $\Lambda$ be a singular hyperbolic homoclinic class of $X$. 
	Assume each pair of periodic orbits of $\Lambda$ are homoclinic related, and $\mathcal{M}(\Phi,\Lambda)=\overline{\mathcal{M}_1(\Lambda)}=\overline{\mathcal{M}_{per}(\Lambda)}$.  Let $\chi:\mathcal{M}(\Phi,\Lambda)\to \mathbb{R}$ be a continuous function satisfying \eqref{equation-W} and \eqref{equation-AF}.  Let $d \in \mathbb{N}$ and $(A, B) \in A(\Phi,\Lambda)^{d} \times A(\Phi,\Lambda)^{d}$ satisfying \eqref{equation-UV}. 
	Then:
	\begin{description}
		\item[(I)] For any $\alpha\in \mathrm{Int}(L_{A,B}),$ any $\mu_0\in M_{A,B}(\alpha),$ any $\max_{\mu\in M_{A,B}(\alpha)}\chi(\mu)\leq c\leq P(\phi,\chi,\mu_0)$ and any $\eta,  \zeta>0$, there is $\nu\in M_{A,B}^{erg}(\alpha)$ such that $d^*(\nu,\mu_0)<\zeta$ and $|P(\Phi,\chi,\nu)-c|<\eta.$ 
		\item[(II)] For any $\alpha\in \mathrm{Int}(L_{A,B})$ and $\max_{\mu\in M_{A,B}(\alpha)}\chi(\mu)\leq c< H_{A,B}(\Phi,\chi,\alpha),$ the set $\{\mu\in M_{A,B}^{erg}(\alpha):P(\Phi,\chi,\mu)=c,\ S_\mu=\Lambda\}$ is residual in $\{\mu\in M_{A,B}(\alpha):P(\Phi,\chi,\mu)\geq c\}.$
		\item[(III)] $\{(\mathcal{P}_{A,B}(\mu), P(\Phi,\chi,\mu)):\mu\in \mathcal{M}(\Phi,M),\ \alpha=\mathcal{P}_{A,B}(\mu)\in\mathrm{Int}(L_{A,B}),\ \max_{\mu\in M_{A,B}(\alpha)}\chi(\mu)\leq P(\Phi,\chi,\mu)< H_{A,B}(\Phi,\chi,\alpha)\}$ coincides with $\{(\mathcal{P}_{A,B}(\mu), P(\Phi,\chi,\mu)):\mu\in \mathcal{M}_{erg}(\Phi,M),\ \alpha=\mathcal{P}_{A,B}(\mu)\in\mathrm{Int}(L_{A,B}), \max_{\mu\in M_{A,B}(\alpha)}\chi(\mu)\leq P(\Phi,\chi,\mu)< H_{A,B}(\Phi,\chi,\alpha)\}.$  
	\end{description}
\end{theorem}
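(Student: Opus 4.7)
The plan is to follow the same architecture as the proof of Theorem \ref{thm-almost3}, but replace the reference to $\mathcal{M}_1(\Lambda)$ by $\mathcal{M}(\Phi,\Lambda)$ throughout, exploiting the stronger density hypothesis $\mathcal{M}(\Phi,\Lambda)=\overline{\mathcal{M}_1(\Lambda)}=\overline{\mathcal{M}_{per}(\Lambda)}$. The key observation is that this hypothesis upgrades the ``multi-horseshoe'' entropy-dense property from $\mathcal{M}_1(\Lambda)$ to the whole simplex $\mathcal{M}(\Phi,\Lambda)$ via the second half of Theorem \ref{Mainlemma-convex-by-horseshoe}. So for any convex combination $F=\mathrm{cov}\{\mu_i\}_{i=1}^m\subseteq\mathcal{M}(\Phi,\Lambda)$ and any $\eta,\zeta>0$, there exist horseshoes (basic sets) $\Lambda_i\subseteq\Theta\subsetneq\Lambda$ approximating the $\mu_i$ and $F$ in Hausdorff sense and in topological entropy.

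Next, I would verify the three ingredients needed to invoke the abstract Theorems \ref{thm-Almost-Additive} and \ref{thm-Almost-Additive2}. First, $\Lambda$ is sectional hyperbolic with hyperbolic singularities (since $\Lambda$ is a singular hyperbolic homoclinic class), so Lemma \ref{lemma-semicontinuous} gives entropy expansiveness and hence upper semi-continuity of the entropy function; this together with the almost additive nature of $(A,B)$ and \eqref{equation-P} suffices in place of the expansivity hypothesis, as noted in Remark \ref{remark-AB}. Second, the hypothesis $\mathcal{M}(\Phi,\Lambda)=\overline{\mathcal{M}_{per}(\Lambda)}$ implies that $\{\mu\in\mathcal{M}(\Phi,\Lambda):h_\mu(\Phi)=0\}$ is dense in $\mathcal{M}(\Phi,\Lambda)$, since every periodic measure has zero metric entropy. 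Third, $\Lambda$ being a homoclinic class provides, by Lemma \ref{lemma-full-support}, an invariant measure $\tilde\mu$ with $S_{\tilde\mu}=\Lambda$, which is exactly the full-support hypothesis needed in item (2) of Theorem \ref{thm-Almost-Additive2}.

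To close the loop, I must check that the abstract hypothesis of Theorem \ref{thm-Almost-Additive} (the ergodic approximation of entropy inside each $\Theta$ produced by the multi-horseshoe property) is satisfied. But each such $\Theta$ is a horseshoe, therefore a basic set, so by Theorem \ref{condition-principle} applied to the restriction $(A,B)$ to $\Theta$, for any $a\in\mathrm{Int}(\mathcal{P}_{A,B}(\mathcal{M}(\Phi,\Theta)))$ and any $\varepsilon>0$ there is an ergodic $\mu_a\in\mathcal{M}(\Phi,\Theta)$ with $\mathcal{P}_{A,B}(\mu_a)=a$ and $|h_{\mu_a}(\Phi)-H(\Phi,a,\Theta)|<\varepsilon$. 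Combined with the multi-horseshoe property, this yields item (I). Then items (II) and (III) follow from item (I) and Theorem \ref{thm-Almost-Additive2}, using the density of zero-entropy measures (for the intermediate value step) and the existence of $\tilde\mu$ (for the residuality statement on $S_\mu=\Lambda$), exactly as in the proof of Theorem \ref{thm-almost2} in Section \ref{section-thm}.

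The main potential obstacle is a bookkeeping issue rather than a conceptual one: Theorems \ref{thm-Almost-Additive} and \ref{thm-Almost-Additive2} are literally stated under the expansivity assumption, so I must explicitly appeal to Remark \ref{remark-AB} to justify that in the almost additive setting only upper semi-continuity of the entropy is used, with continuity of $\mu\mapsto\lim_t t^{-1}\!\int a_t\,d\mu$ supplied for free by \eqref{equation-P}. Once this replacement is justified, the remainder of the argument is a direct transcription of the proof of Theorem \ref{thm-almost2}, the only novelty being that the ``multi-horseshoe'' input comes from the second conclusion of Theorem \ref{Mainlemma-convex-by-horseshoe} rather than from the basic-set version (Theorem \ref{Mainlemma-convex-by-horseshoe5}).
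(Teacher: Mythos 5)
Your proposal is correct and follows essentially the same route as the paper: the paper proves Theorem \ref{thm-almost} precisely by combining Theorem \ref{Mainlemma-convex-by-horseshoe} (the full-simplex multi-horseshoe property, available thanks to $\mathcal{M}(\Phi,\Lambda)=\overline{\mathcal{M}_1(\Lambda)}=\overline{\mathcal{M}_{per}(\Lambda)}$), Theorem \ref{condition-principle} on the horseshoes $\Theta$, the abstract Theorems \ref{thm-Almost-Additive} and \ref{thm-Almost-Additive2} read via Remark \ref{remark-AB} (replacing expansivity by upper semi-continuity of entropy, supplied here by Lemma \ref{lemma-semicontinuous}), and Lemma \ref{lemma-full-support} for the full-support measure, exactly as you lay out.
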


Proceeding in a similar manner to Theorem \ref{Thm:basic-set1} and \ref{Thm:basic-set2},
let $\chi\equiv 0$ and $d=1,2$ in  Theorem \ref{thm-almost},  we obtain  Theorem \ref{Thm:irregular-Lorenz} by Remark \ref{remark-AA}.

\subsection*{Acknowledgments}
 The authors are
 supported by the National Natural Science Foundation of China (grant No. 12071082, 11790273) and in part by Shanghai Science and Technology Research Program (grant No. 21JC1400700).
 
\subsection*{Declarations}
{\bf  Conﬂict of interest} The authors declare that they have no conﬂict of interest.

\bibliographystyle{plain}

\end{document}